\newcommand{\qedMP}{}
\title{Improved convergence analysis of Lasserre's measure--based upper bounds for polynomial minimization on compact sets}
\runningtitle{Improved convergence analysis of Lasserre's measure--based upper bounds for polynomial minimization}
\author{
	Lucas Slot \thanks{Centrum Wiskunde \& Informatica (CWI), Amsterdam, \url{lucas.slot@cwi.nl}}
	\And
	Monique Laurent \thanks{Centrum Wiskunde \& Informatica (CWI), Amsterdam and Tilburg University, \url{monique.laurent@cwi.nl}}
}
\date{\today}
\begin{document}

\maketitle
\begin{abstract}
	We consider the problem of computing the minimum value $f_{\min,K}$ of a polynomial $f$ over a compact set $K\subseteq \R^n$, which can be reformulated as finding a probability measure $\nu$ on $\mainset$ minimizing $\int_\mainset f d\nu$. Lasserre showed that it suffices to consider such measures of the form $\nu = q\mu$, where $q$ is a sum-of-squares polynomial and $\mu$ is a given  Borel measure supported on $\mainset$. By bounding the degree of $q$ by $2r$ one gets a converging hierarchy of upper bounds $f^{(r)}$\, for $f_{\min,K}$. 
When $K$ is the hypercube $[-1, 1]^n$, equipped with the Chebyshev measure, the parameters $\Lasupbound{r}$ are known to converge to $\mainfuncmin$ at a rate in $\bigO(1/r^2)$. We extend this error estimate to a wider class of convex bodies, while also allowing for a broader class of reference measures, including the Lebesgue measure. Our analysis applies to 	simplices,  balls and convex bodies that locally look like a ball.
In addition,  we show an error estimate in $\bigO(\log r / r)$ when $\mainset$ satisfies a minor geometrical condition, and in  $\bigO(\log^2 r / r^2)$ when $\mainset$ is a convex body, equipped with the Lebesgue measure.
This improves upon the currently best known error estimates in $\bigO(1 / \sqrt{r})$ and $\bigO(1/r)$ for these two respective cases.

\end{abstract}
	\keywords{
		polynomial optimization \and sum-of-squares polynomial \and Lasserre hierarchy \and semidefinite programming \and needle polynomial
	}	
	\subclass{90C22; 90C26; 90C30}
	
\section{Introduction}
    \label{SEC_Introduction}
    \subsection{Lasserre's measure-based hierarchy}

Let $\mainset \subseteq \R^n$ be a compact set and let $\mainfunc \in \R[x]$ be a polynomial. We consider the minimization problem
\begin{equation}
    \label{EQ_mainfuncmin}
    \mainfuncmin := \min_{x \in \mainset} \mainfunc(x).
\end{equation}

Computing $\mainfuncmin$ is a hard problem in general, and some well-known problems from combinatorial optimization are among its special cases. For example, it is shown in \cite{MotzkinStraus1965, deKlerkPasechnick2002} that the stability number $\alpha(G)$ of a graph $G = ([n], E)$ is given by
\begin{equation}
    \frac{1}{\alpha(G)} = \min_{x \in K} \sum_{i \in V} x^2_i + 2\sum_{\{i,j\} \in E} x_ix_j,
\end{equation}
where we take  $K= \{x \in \R^n : x \geq 0, \ \sum_{i=1}^n x_i = 1\}$ to be the standard simplex in $\R^n$.

Problem \eqref{EQ_mainfuncmin} may be reformulated as the problem of  finding a probability measure $\nu$ on $\mainset$ for which the integral $\int_\mainset \mainfunc d\nu$ is minimized. Indeed, for any such $\nu$ we have $\int_\mainset f d\nu \geq \mainfuncmin \int_\mainset d \nu = \mainfuncmin$. On the other hand, if $a \in \mainset$ is a global minimizer of $\mainfunc$ in $K$, then we have $\int_\mainset \mainfunc d\delta_a = \mainfunc(a) =  \mainfuncmin$, where $\delta_a$ is the Dirac measure centered at $a$. 

Lasserre \cite{Lasserre2010} showed that it suffices to consider measures of the form $\nu = \Laspoly \mu$, where $\Laspoly \in \Sigma$ is a sum-of-squares polynomial and $\mu$ is a (fixed) reference Borel measure supported by $\mainset$. That is, we may reformulate \eqref{EQ_mainfuncmin} as
\begin{equation}
    \label{EQ_fmin_measures}
    \mainfuncmin = \inf_{\Laspoly \in \Sigma} \int_\mainset \mainfunc(x)\Laspoly(x)d\mu(x) \quad\text{s.t.} \int_\mainset \Laspoly(x)d\mu(x) = 1.
\end{equation}
For each $r \in \N$ we may then obtain an upper bound $\Lasupboundl{r}{\mainset, \mu}$ for $\mainfuncmin$ by limiting our choice of $\Laspoly$ in \eqref{EQ_fmin_measures} to polynomials of degree at most $2r$:
\begin{equation}
    \label{EQ_fmin_measures_bounded}
    \Lasupboundl{r}{\mainset, \mu} := \inf_{\Laspoly \in \Sigma_r} \int_\mainset \mainfunc(x)\Laspoly(x)d\mu(x) \quad\text{s.t.} \int_\mainset \Laspoly(x)d\mu(x) = 1.
\end{equation}
Here, $\Sigma_r$ denotes the set of all sum-of-squares polynomials of degree at most $2r$.
We shall also write $\Lasupbound{r} = \Lasupboundl{r}{\mainset, \mu}$ for simplicity.
As detecting sum-of-squares polynomials is possible using semidefinite programming, the program \eqref{EQ_fmin_measures_bounded} can be modeled as an SDP \cite{Lasserre2010}. Moreover, the special structure of this SDP allows a reformulation to an eigenvalue minimization problem \cite{Lasserre2010}, as will be briefly described below.

By definition, we have $\mainfuncmin \leq \Lasupbound{r+1} \leq \Lasupbound{r}$ for all $r \in \N$ and 
\begin{equation}
    \label{EQ_converging_sequence}
    \lim_{r \rightarrow \infty} \Lasupbound{r} = \mainfuncmin.
\end{equation}
In this paper we are interested in upper bounding the convergence rate of the sequence $(\Lasupbound{r})_r$ to $\mainfuncmin$ in terms of $r$. That is, we wish to find bounds in terms of $r$ for the parameter:
\begin{equation}
\ubError{r}{\mainset, \mu}{\mainfunc} := \Lasupbound{r} - \mainfuncmin,
\end{equation}
often also denoted $\ubError{r}{}{\mainfunc}$ for simplicity when there is no ambiguity on $K,\mu$.

\subsection{Related work}
\label{SEC_relatedwork}
Bounds on the parameter $\ubError{r}{\mainset,\mu}{\mainfunc}$ have been shown in the literature for several different sets of assumptions on $\mainset, \mu$ and $\mainfunc$. Depending on these assumptions, two main strategies have been employed, which we now briefly discuss.

\medskip
\noindent
\textbf{Algebraic analysis via an eigenvalue reformulation.} 
The first strategy relies on a reformulation of the optimization problem \eqref{EQ_fmin_measures_bounded} as an eigenvalue minimization problem (see \cite{deKlerk2017, Lasserre2010}).
We describe it briefly, in the univariate case $n=1$ only, for  simplicity and since this is the case we need.
Let $\{ p_r \in \R[x]_r: r \in \N\}$ be the (unique) orthonormal basis of $\R[x]$ w.r.t. the inner product $\langle p_i, p_j \rangle = \int_\mainset p_i p_j d\mu$. For each $r \in \mathbb{N}$, we then define the (generalized) \emph{truncated moment matrix} $M_{r,f}$
of $\mainfunc$ by setting 
\begin{equation}
    M_{r,f}(i,j) := \int_{\mainset} p_i p_j f d \mu \quad \text{for } 0 \leq i,j \leq r.
\end{equation}
It can be shown that $\Lasupbound{r} =\lambda_{\min}(M_{r,f})$, 
the smallest eigenvalue of the matrix $M_{r,f}$.
Any bounds on the eigenvalues of $M_{r,f}$ thus immediately translate to bounds on $\Lasupbound{r}$. 


In \cite{deKlerkLaurent2018}, the authors 
determine the exact asymptotic behaviour of $\lambda_{\min}(M_{r,f})$ in the case that $\mainfunc$ is a quadratic polynomial, $\mainset = [-1, 1]$ and $d\mu(x) = (1-x^2)^{-\frac{1}{2}}dx$, known as the Chebyshev measure. Based on this, they  show that $\ubError{r}{}{\mainfunc}=\bigO(1/r^2)$ and  
 extend this result to arbitrary multivariate polynomials $\mainfunc$ on the hypercube $[-1, 1]^n$ equipped with the product measure $d\mu(x) = \prod_i^n (1-x_i)^{-1/2}dx_i$. In addition, they prove that $\ubError{r}{}{\mainfunc} = \Theta(1/r^2)$ for linear polynomials,
which thus shows that in some sense quadratic convergence is the best we can hope for. (This latter result is shown in \cite{deKlerkLaurent2018} for all measures with Jacobi weight on $[-1,1]$).

The  orthogonal polynomials corresponding to the  measure $(1-x^2)^{-1/2}dx$ on $[-1,1]$ are  the  Chebyshev polynomials  of the first kind, denoted by $\Cheby{r}$. They  are well-studied objects (see, e.g., \cite{Rivlin1990}). In particular, they satisfy the following \emph{three-term recurrence} relation
\begin{equation}\label{eq:3term}
    \Cheby{0}(x) = 1, \Cheby{1}(x) = x, \text{ and } \Cheby{r+1}(x) = 2x\Cheby{r}(x) - \Cheby{r-1}(x) \text{ for } r \geq 1.
\end{equation}
This imposes a large amount of structure on the matrix $M_{r, f}$ when $\mainfunc$ is quadratic, which has been  exploited in \cite{deKlerkLaurent2018} to obtain information on its smallest eigenvalue.

The main disadvantage of the eigenvalue strategy is that it requires the moment matrix of $\mainfunc$ to have a closed form expression which is sufficiently structured so as to allow for an analysis of its eigenvalues. Closed form expressions for the entries of the matrix $M_{r,f}$ are known only for  special sets $\mainset$, such as the interval $[-1,1]$, the unit ball,  the unit sphere, or  the simplex, and only with respect to certain measures.

However, as we will see in this paper,  the convergence analysis from \cite{deKlerkLaurent2018} in $\bigO(1/r^2)$ for the interval $[-1,1]$ equipped with  the Chebyshev measure, 
can be transported 
to a large class of compact sets, such as the interval $[-1,1]$ with more general measures, the ball, the simplex, and `ball-like' convex bodies. 

\medskip 
\noindent
\textbf{Analysis via the construction of feasible solutions.}
A second strategy to bound the convergence rate of the parameters $\ubError{r}{}{\mainfunc}$ is to construct explicit sum-of-squares density functions $\Laspolye{r} \in \Sigma_r$ for which the integral $\int_\mainset q_r \mainfunc d\mu$ is close to $\mainfuncmin$. In contrast to the previous strategy, such constructions will only yield upper bounds on $\ubError{r}{}{\mainfunc}$.

As noted earlier, the integral $\int_\mainset \mainfunc d\nu$ may be minimized by selecting the probability measure  $\nu = \delta_a$,  the Dirac measure  at a global minimizer $a$ of $\mainfunc$ on $\mainset$. When the reference measure $\mu$ is the Lebesgue measure, it thus intuitively seems sensible to consider sum-of-squares densities $\Laspolye{r}$ that approximate the Dirac delta in some way. 

This approach is followed in \cite{deKlerk2017}. There, the authors consider truncated Taylor expansions of the Gaussian function $e^{-t^2/2\sigma}$, which they use to define the sum-of-squares polynomials
\begin{equation}
    \phi_{r}(t) = \sum_{k=0}^{2r} \frac{1}{k!}\bigg(\frac{-t^{2}}{2 \sigma}\bigg)^k\in \Sigma_{2r}\quad \text{ for } r\in \N.
\end{equation}
Setting $q_r(x) \sim \phi_{r}(||x-a||)$ for carefully selected standard deviation $\sigma = \sigma(r)$, 
they show
  that $\int_{\mainset} \mainfunc(x) \Laspolye{r}(x) dx - \mainfunc(a) = \bigO(1/\sqrt{r})$ when $\mainset$ satisfies a minor geometrical assumption (Assumption~\ref{ASSU_one} below), which holds, e.g., if $\mainset$ is a convex body or if it is star-shaped with respect to a ball.

In  subsequent work \cite{deKlerkLaurent2017}, the authors show that if $\mainset$ is assumed to be a convex body, then a bound in $\bigO(1/r)$ may be obtained by setting $\Laspolye{r} \sim \phi_r(f(x))$. As explained in \cite{deKlerkLaurent2017}, the sum-of-squares density $q_r$ in this case can be seen as an approximation of the Boltzman density function for $\mainfunc$, which plays an important role in simulated annealing.

The advantage of this second strategy seems to be its applicability to a broad class of sets $\mainset$ with respect to the natural Lebesgue measure. This generality, however, is so-far offset by significantly weaker bounds on $\ubError{r}{}{\mainfunc}$.
Another main contribution of this paper will be to show improved bounds on  $\ubError{r}{}{\mainfunc}$ for this broad class of sets $K$.

\medskip
 
\noindent
\textbf{Analysis for the hypersphere.} 
Tight results are known for polynomial minimization on the unit sphere $S^{n-1}=\{x\in\R^n: \sum_ix_i^2=1\}$, equipped with the Haar surface measure.  Doherty and Wehner \cite{Doherty_Wehner2012} have shown a convergence rate in 
$\bigO(1/r)$, by  using harmonic analysis on the sphere and connections to quantum information theory.
In the very recent work \cite{deKlerkLaurent2019}, the authors show an improved  convergence rate in $\bigO(1/r^2)$, by using a reduction to the  case of the interval  $ [-1,1]$ and the above mentioned convergence rate in $\bigO(1/r^2)$ for this case. This reduction is based on   replacing $f$ by an easy (linear) upper estimator. This idea was  already exploited in  \cite{deKlerk2017,deKlerkLaurent2017} (where a quadratic upper estimator was used) and we will also exploit it in this paper.

\subsection{Our contribution}
The contribution of this paper is showing improved bounds on the convergence rate of the parameters $\ubError{r}{\mainset,\mu}{\mainfunc}$ for a wide class of sets $K$ and measures $\mu$. It is twofold.

Firstly, 
we extend the known bound from  \cite{deKlerkLaurent2018}  in $\bigO(1/r^2)$ for the hypercube $[-1,1]^n$ equipped with the Chebyshev measure, to a wider class of convex bodies. Our results hold for the ball $\unitball$, the simplex $\simplex{n}$, and `ball-like' convex bodies (see Definition \ref{DEF:round}) equipped with the Lebesgue measure. For the ball and hypercube, they further hold for a wider class of measures; namely for the measures given by
\begin{equation}
	w_\lambda(x)dx := (1-\|x\|^2)^\lambda dx \quad (\lambda \geq 0)
\end{equation} 
on the ball, and the measures
\begin{equation}
	\widehat{w_\lambda}(x)dx := \prod_{i=1}^n (1-x_i^2)^\lambda dx \quad (\lambda \geq -\frac{1}{2})
\end{equation} 
on the hypercube. Note that for the hypercube, setting $\lambda = -\frac{1}{2}$ yields the Chebyshev measure, and that for both the ball and the hypercube, setting $\lambda = 0$ yields the Lebesgue measure. 
The rate $\bigO(1/r^2)$ also holds for any compact $K$ equipped with the Lebesgue measure under the assumption of existence of a global minimizer in the interior of $K$. These results are presented in Section~\ref{SEC_SpecialConvexBodies}.

Secondly,  we improve the known bounds in $\bigO(1/\sqrt{r})$ and $\bigO(1/r)$ for general compact sets (under Assumption~\ref{ASSU_one}) and convex bodies equipped with the Lebesgue measure,  established in \cite{deKlerk2017}, \cite{deKlerkLaurent2017}, respectively. For general compact sets, we prove a bound in $\bigO(\log r / r)$,  and for convex bodies we show a bound in $\bigO(\log^2 r / r^2)$. 
These results are exposed in Section~\ref{SEC_GeneralSets}.

\begin{table}
\centering
\def\arraystretch{1.3}
\begin{tabular}{|c|c|c|c|}
\hline
$\mainset \subseteq \R^n$ compact & $\ubError{r}{\mainset, \mu}{\mainfunc}$ & $\mu$  & reference \\
\hline
General  & $o(1)$ & Borel & \cite{Lasserre2010}   \\
Assumption \ref{ASSU_one} &$\bigO(1/\sqrt{r})$ & Lebesgue & \cite{deKlerk2017} \\
Convex body  & $\bigO(1/r)$ & Lebesgue  & \cite{deKlerkLaurent2017} \\
Hypersphere  & $\bigO(1/r)$ & Haar  & \cite{Doherty_Wehner2012} \\
Hypersphere & $\bigO(1/r^2)$ & Haar  & \cite{deKlerkLaurent2019} \\
Hypercube  & $\bigO(1/r^2)$ & Chebyshev  & \cite{deKlerkLaurent2018} \\
\hline
Hypercube &\multirow{5}{*}{$\bigO(1/r^2)$} & $\widehat{w_\lambda}(x)dx ~~ (\lambda \geq -1/2)$ & Thm. \ref{THM_general_box} \\
Ball &  & $w_\lambda(x)dx ~~ (\lambda \geq 0)$ & Thm. \ref{THM_general_ball} \\
Simplex &  & Lebesgue & Thm. \ref{THM_simplex} \\
Ball-like convex body &  & Lebesgue & Thm. \ref{THM_squeeze} \\
Global minimizer in the interior &  & Lebesgue & Thm. \ref{THM_interiorglobalminimizer} \\
\hline
Assumption \ref{ASSU_one} & $\bigO(\log r / r)$ & \multirow{2}{*}{Lebesgue} & Thm. \ref{THM_bound_interior_cone} \\
Convex body & $\bigO(\log^2 r / r^2)$ &  & Thm. \ref{THM_bound_convex_body} \\
\hline
\end{tabular}
\caption{Known and new convergence rates for the Lasserre hierarchy of upper bounds $\ubError{r}{\mainset, \mu}{\mainfunc}$.}
\label{TAB_convergence_rates}
\end{table}

For our results in Section~\ref{SEC_SpecialConvexBodies}, we will use several tools that will enable us to reduce to the case of the interval $[-1,1]$ equipped with the Chebyshev measure. These tools are presented in \mbox{Sections \ref{SEC_Preliminaries} and \ref{SEC_SpecialConvexBodies}}.
 They include: (a) replacing $K$ by an affine linear image of it (Section~\ref{SEC:Linear}); (b) replacing $f$ by an upper estimator (easier to analyze, obtained via Taylor's theorem) (Section~\ref{SEC:Upper}); (c) transporting results between two comparable weight functions on $K$ and between two convex sets $K,\widehat K$ which look locally the same in the neighbourhood of a global minimizer \mbox{(Sections~\ref{SEC:weights}, \ref{SEC:localsimilarity})}. In particular, the result of Proposition~\ref{PROP_differentmeasure} will play a key role in our treatment.

To establish our results in Section~\ref{SEC_GeneralSets} we will follow the second strategy sketched above, namely we will define suitable sum-of-squares polynomials that approximate well the Dirac delta at a global minimizer.
However, instead of using truncations of the Taylor expansion of  the Gaussian function or of the Boltzman distribution as was done in \cite{deKlerk2017}, \cite{deKlerkLaurent2017}, we will now use the so-called needle polynomials from \cite{Kroo1992} (constructed from the Chebyshev polynomials, see Section~\ref{SEC_needles}).
In Table~\ref{TAB_convergence_rates} we provide an overview of both known and new results.

Finally, we illustrate some of the results in Sections \ref{SEC_SpecialConvexBodies} and \ref{SEC_GeneralSets} with numerical examples in Section \ref{SEC_NumericalExperiments}.
\section{Preliminaries}
    \label{SEC_Preliminaries}
    In this section, we first introduce some notation that we will use throughout the rest of the paper and recall some basic terminology and results about convex bodies. We then show that the error $\ubError{r}{}{\mainfunc}$ is invariant under nonsingular affine transformations of $\R^n$. Finally, we introduce the notion of \emph{upper estimators} for $\mainfunc$. Roughly speaking, this tool will allow us to replace $\mainfunc$ in the analysis of $\ubError{r}{}{\mainfunc}$ by a simpler function (usually a quadratic, separable polynomial). We will make use of this extensively in both Section~\ref{SEC_SpecialConvexBodies} and Section~\ref{SEC_GeneralSets}.

\subsection{Notation}
For $x, y \in \R^n$, 
$\langle x, y \rangle$ denotes  the standard inner product 
and  $\|x\|^2 = \langle x, x \rangle$  the corresponding norm. We write $\ball{\rho}{c} := \{x \in \R^n : \|x - c\| \leq \rho \}$ for the $n$-dimensional ball of radius $\rho$ centered at $c$. 
When $\rho = 1$ and $c = 0$, we also write $\unitball := \ball{1}{0}$.

Throughout, $\mainset \subseteq \R^n$ is always a compact set with non-empty interior, and $\mainfunc$ is an $n$-variate 
polynomial. We let $\nabla \mainfunc (x)$ (resp., $\nabla^2f(x)$)  denote the gradient (resp., the Hessian) of $\mainfunc$ at $x \in \R^n$, and introduce the parameters 
\begin{equation}
\label{EQ_gradHessconsts}
\gradConst{\mainfunc}{\mainset} := \max_{x \in \mainset} \|\nabla \mainfunc (x)\| \quad \text{ and } \quad \HessConst{\mainfunc}{\mainset} := \frac{1}{2}\max_{x \in \mainset} \|\nabla^2 \mainfunc (x)\|.
\end{equation}
Whenever we write an expression of the form
\[
``\ubError{r}{}{\mainfunc} = \bigO(1/r^2)",
\]
we mean that there exists a constant $c > 0$ such that $\ubError{r}{}{\mainfunc}\leq c/r^2$ for all $r \in \N$, where $c$ depends only on $\mainset, \mu$, and the parameters $\gradConst{\mainfunc}{\mainset}, \HessConst{\mainfunc}{\mainset}$. 
Some of our results are obtained by embedding $\mainset$ into a larger set $\biggerset \subseteq \R^n$. 
If this is the case, then $c$ may depend on $\gradConst{\mainfunc}{\biggerset}, \HessConst{\mainfunc}{\biggerset}$ as well.
If there is an additional dependence of $c$ on the global minimizer $a$ of $\mainfunc$ on $\mainset$, we will make this explicit by using the notation $``\bigO_a"$.

\subsection{Convex bodies}
Let $\mainset \subseteq \R^n$ be a convex body, i.e., a compact, convex set with non-empty interior. 
We say $v \in \R^n$ is an (inward) normal of $\mainset$ at $a \in \mainset$ if $\langle v, x-a \rangle \geq 0$ holds for all $x \in \mainset$. We refer to the set of all normals of $\mainset$ at $a$ as the normal cone, and write
\begin{equation}
    \label{EQ_normalconedefinition}
    \normalcone{\mainset}{a} := \{ v \in \R^n : \langle v, x - a \rangle \geq 0 \text{ for all } x \in \mainset\}.
\end{equation}
We will make use of the following basic result.

\begin{lem}[e.g., {\cite[Prop. 2.1.1]{BorweinLewis}}]
\label{LEM_first_order_condition}
Let $\mainset$ be a convex body and let $g : \R^n \rightarrow \R$ be a continuously differentiable function with local minimizer $a \in \mainset$. Then $\nabla g (a) \in \normalcone{\mainset}{a}$.
\end{lem}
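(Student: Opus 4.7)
The plan is to establish the lemma by a straightforward contradiction argument, exploiting convexity to stay inside $\mainset$ along any admissible direction. Suppose $\nabla g(a) \notin \normalcone{\mainset}{a}$. By the definition \eqref{EQ_normalconedefinition} of the normal cone, this means there exists some $x \in \mainset$ such that $\langle \nabla g(a), x - a \rangle < 0$.

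Next I would use convexity of $\mainset$ to produce a feasible curve along which $g$ strictly decreases from $a$. For every $t \in [0,1]$ the point $a + t(x-a) = (1-t)a + tx$ lies in $\mainset$. Define $\varphi : [0,1] \to \R$ by $\varphi(t) := g(a + t(x-a))$. Since $g$ is continuously differentiable, so is $\varphi$, and the chain rule gives $\varphi'(0) = \langle \nabla g(a), x - a \rangle < 0$. Hence there exists $t_0 > 0$ small enough so that $\varphi(t) < \varphi(0) = g(a)$ for all $t \in (0, t_0]$.

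This yields a sequence of points $a_t := a + t(x-a) \in \mainset$ with $a_t \to a$ as $t \to 0^+$ and $g(a_t) < g(a)$, contradicting the assumption that $a$ is a local minimizer of $g$ on $\mainset$. Therefore $\nabla g(a)$ must lie in $\normalcone{\mainset}{a}$.

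There is essentially no obstacle here; the argument is just the standard first-order optimality condition for smooth minimization over a convex set. The only thing one must be careful with is to note that the existence of a direction $x - a$ with negative directional derivative suffices (one does not need $-\nabla g(a)$ itself to point into $\mainset$ from $a$), since convexity guarantees that every chord $[a,x]$ with $x \in \mainset$ is an admissible test direction.
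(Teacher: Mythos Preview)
Your proof is correct and follows essentially the same approach as the paper: both argue by contradiction, pick a point $x\in\mainset$ with $\langle \nabla g(a), x-a\rangle < 0$, and use convexity of $\mainset$ together with the directional derivative along the segment $[a,x]$ to find nearby points in $\mainset$ where $g$ is strictly smaller than $g(a)$. The only cosmetic difference is that you package the directional derivative via the auxiliary function $\varphi(t)=g(a+t(x-a))$ and the chain rule, whereas the paper writes the limit defining $\langle \nabla g(a), x-a\rangle$ directly.
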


\begin{proof}
Suppose not. Then, by definition of $\normalcone{\mainset}{a}$, there exists an element  $y \in \mainset$ such that  $\langle \nabla g(a), y - a \rangle < 0$. Expanding the definition of the gradient this means that
\begin{equation}
    0 > \langle \nabla g(a), y - a \rangle = \lim_{t \downarrow  0}\frac{g(ty + (1-t) a) - g(a)}{t},
\end{equation}
which implies  $g(ty + (1-t) a) < g(a)$ for all $t > 0$ small enough.  But $ty + (1-t) a \in \mainset$ by convexity, contradicting the fact that $a$ is a local minimizer of $g$ on $\mainset$.
\qedMP\end{proof}

The set $\mainset$ is \emph{smooth} if it has a unique unit normal $\normal{a}$ at each boundary point $a \in \partial\mainset$. In this case, we denote by $T_a\mainset$ the (unique) hyperplane tangent to $\mainset$ at $a$, defined by the equation $\langle x-a, \normal{a} \rangle = 0$.

For $k \geq 1$, we say $\mainset$ is of class $C^k$ if there exists a convex function $\Psi \in C^k(\R^n, \R)$ such that $\mainset = \{ x \in \R^n : \Psi(x) \leq 0\}$ and $\partial \mainset =  \{x \in \R^n : \Psi(x) = 0\}$. 
If $\mainset$ is of class $C^k$ for some $k \geq 1$, it is automatically smooth in the above sense.

We refer, e.g., to \cite{bonnesenConvexBodies} for a general reference on convex bodies.

\subsection{Linear transformations}\label{SEC:Linear}

Suppose that $\phi : \R^n \rightarrow \R^n$ is a nonsingular affine transformation, given by $\phi(x) = Ux + c$. If $\Laspoly$ is a sum-of-squares density function w.r.t. the Lebesgue measure on $\phi(\mainset)$, then we have
\begin{equation}
\int_{\phi(\mainset)} \Laspoly(y) \mainfunc(\phi^{-1}(y)) dy = |\det U| \cdot \int_{\mainset} \Laspoly(\phi(x)) \mainfunc(x)dx\quad \text{ and }
\end{equation}
\begin{equation}
1 = \int_{\phi(\mainset)} \Laspoly(y) dy =  |\det U| \cdot \int_{\mainset} \Laspoly(\phi(x)) dx.
\end{equation}
As a result, the polynomial $\widehat\Laspoly := (\Laspoly \circ \phi) / \int_\mainset \Laspoly(\phi(x))dx = (\Laspoly \circ \phi) \cdot |\det U|$ is a sum of squares density function w.r.t. the Lebesgue measure on $\mainset$. It has the same degree as $q$, and it satisfies
\begin{equation}
\int_{\mainset} \widehat{\Laspoly}(x) \mainfunc(y)dx = \int_{\phi(\mainset)} \Laspoly(x) \mainfunc(\phi^{-1}(y)) dx.
\end{equation}
We have just shown the following.
\begin{lem}
\label{LEM_lineartransformation}
Let $\phi : \R^n \rightarrow \R^n$ be a non-singular affine transformation. Write $g := \mainfunc \circ \phi^{-1}$. Then we have
\begin{equation}
\ubError{r}{\mainset}{\mainfunc} = \ubError{r}{\phi(\mainset)}{g}.
\end{equation}
\end{lem}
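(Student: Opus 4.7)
The plan is to observe that the two displayed change-of-variables identities in the paragraph preceding the lemma already perform most of the work, and to package them as a degree-preserving bijection between feasible sum-of-squares densities for the two versions of problem \eqref{EQ_fmin_measures_bounded}, attached respectively to $(K,\mu)$ and $(\phi(K),\mu)$ with $\mu$ the Lebesgue measure.

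First I would check that the map $q \mapsto \widehat q := (q\circ\phi)\cdot|\det U|$ sends $\Sigma_r$ into $\Sigma_r$. Indeed, if $q = \sum_i p_i^2$ with $\deg p_i \leq r$, then $q\circ\phi = \sum_i (p_i\circ\phi)^2$, and each $p_i\circ\phi$ has degree at most $r$ because $\phi$ is affine; multiplying by the positive constant $|\det U|$ preserves both the degree bound and the sum-of-squares property.

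Next, the two displayed identities in the text just above the lemma show precisely that $\widehat q$ is a probability density with respect to the Lebesgue measure on $K$ whenever $q$ is one on $\phi(K)$, and that
\[
\int_K \widehat q(x) f(x)\, dx = \int_{\phi(K)} q(y) g(y)\, dy.
\]
Combining this with the identity $f_{\min,K} = g_{\min,\phi(K)}$ (which holds because $\phi$ is a bijection from $K$ onto $\phi(K)$ and $g = f\circ\phi^{-1}$) gives the inequality $\ubError{r}{K}{f} \leq \ubError{r}{\phi(K)}{g}$: every feasible solution for the program on $\phi(K)$ is transported to a feasible solution on $K$ with the same objective value, and the common constant $f_{\min,K} = g_{\min,\phi(K)}$ is subtracted on both sides.

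For the reverse inequality I would apply the same construction with $\phi$ replaced by the (again nonsingular affine) map $\phi^{-1}$, now converting a feasible SOS density on $K$ into one on $\phi(K)$. Since the argument is essentially a direct verification, I do not expect any genuine obstacle; the only point worth flagging is that both the degree bound and the sum-of-squares property are preserved precisely because $\phi$ is affine — a general smooth bijection would not have this property, which is ultimately why the lemma is restricted to affine transformations.
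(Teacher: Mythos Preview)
Your proposal is correct and follows essentially the same approach as the paper: the paper's proof is in fact the discussion immediately preceding the lemma (ending with ``We have just shown the following''), which establishes exactly the change-of-variables correspondence $q \mapsto \widehat q = (q\circ\phi)\,|\det U|$ that you describe. You have merely made explicit a few points the paper leaves implicit---that composition with an affine map preserves both the sum-of-squares structure and the degree bound, that $f_{\min,K} = g_{\min,\phi(K)}$, and that the reverse inequality follows by applying the same argument to $\phi^{-1}$.
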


\subsection{Upper estimators}\label{SEC:Upper}

Given a point $a \in \mainset$ and two functions $f,g : \mainset \rightarrow \R$, we write $\mainfunc \leq_a g$ if $\mainfunc(a) = g(a)$ and $\mainfunc(x) \leq g(x)$ for all $x \in \mainset$; we then say that   $g$ is an \emph{upper estimator} for $\mainfunc$ on $\mainset$, which is \emph{exact at} $a$. The next lemma, whose easy proof is  omitted,  will be very useful.

\begin{lem}
\label{LEM_upperestimator}
Let $g : \mainset \rightarrow \R$ be an upper estimator for $\mainfunc$, exact at one of its global minimizers  on $\mainset$. Then we have  $\ubError{r}{}{\mainfunc} \leq \ubError{r}{}{g}$ for all $r \in \N$.
\end{lem}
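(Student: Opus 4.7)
The plan is to unpack the definitions and use monotonicity of integration against a nonnegative density. First I would show that $f$ and $g$ share the same minimum value on $K$: let $a \in K$ be the global minimizer of $f$ at which $f \leq_a g$ is exact. Since $g(x) \geq f(x) \geq f_{\min,K} = f(a) = g(a)$ for all $x \in K$, the point $a$ is also a global minimizer of $g$ on $K$, and $g_{\min,K} = f_{\min,K}$.

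Next I would compare the Lasserre bounds $\Lasupbound{r}$ for $f$ and for $g$. Any sum-of-squares polynomial $q \in \Sigma_r$ is pointwise nonnegative on $\R^n$, so from $f(x) \leq g(x)$ on $K$ we get $f(x) q(x) \leq g(x) q(x)$ on $K$, and hence
\begin{equation*}
\int_K f(x) q(x) d\mu(x) \leq \int_K g(x) q(x) d\mu(x)
\end{equation*}
for every such $q$. In particular, if $q$ is feasible for \eqref{EQ_fmin_measures_bounded}, i.e.\ $\int_K q\, d\mu = 1$, this inequality holds. Taking the infimum over all feasible $q \in \Sigma_r$ on both sides yields $\Lasupboundl{r}{K,\mu}[f] \leq \Lasupboundl{r}{K,\mu}[g]$.

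Combining the two steps, subtracting the common value $f_{\min,K} = g_{\min,K}$ from both sides of this last inequality gives $\ubError{r}{}{f} \leq \ubError{r}{}{g}$, as required. There is no real obstacle here: the only substantive observation is that upper-estimation is preserved when one integrates against a \emph{nonnegative} weight, which is exactly what sum-of-squares densities provide; the equality $f_{\min,K} = g_{\min,K}$ is forced by the exactness condition at $a$.
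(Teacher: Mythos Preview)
Your argument is correct and is exactly the kind of routine verification the authors had in mind: the paper explicitly omits the proof, calling it ``easy''. Your two observations---that exactness at a global minimizer forces $g_{\min,K}=f_{\min,K}$, and that integrating the pointwise inequality $f\le g$ against a nonnegative sum-of-squares density preserves the order---are precisely what is needed, and nothing more.
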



\begin{rem}
\label{REM_reducetoestimator}
We make the following observations for future reference.
\begin{itemize}
    \item[1.] Lemma~\ref{LEM_upperestimator} tells us that we may always replace $\mainfunc$ in our analysis by an upper estimator which is exact at one of its global minimizers. This is useful if we can find an upper estimator that is significantly simpler to analyse. 
    
    \item[2.] We may always assume that $\mainfuncmin = 0$, in which case $\mainfunc(x) \geq 0$ for all $x\in \mainset$ and $\ubError{r}{}{\mainfunc} = \Lasupbound{r}$. Indeed, if we consider the function $g$ given by $g(x) = \mainfunc(x) - \mainfuncmin$, then $g_{\min, \mainset} = 0$, and for every density function $q$ on $\mainset$, we have
        \begin{equation}
            \int_\mainset g(x)\Laspoly(x)d\mu(x) = \int_\mainset \mainfunc(x)\Laspoly(x)d\mu(x) - \mainfuncmin,
        \end{equation}
        showing that $\ubError{r}{}{\mainfunc} = \ubError{r}{}{g} = g^{(r)}$ for all $r \in \N$.
\end{itemize}
\end{rem}

In the remainder of this section, we derive some general upper estimators based on the following variant of Taylor's theorem for multivariate functions.
\begin{thm}[Taylor's theorem]
\label{THM_Taylor}
For  $f  \in C^2(\R^n, \R)$ and $a \in \mainset$  we have
\begin{equation}
    \label{EQ_Taylor_bound}
    f(x) \leq f(a) + \langle \nabla f(a), x-a \rangle + \HessConst{\mainset}{f} \|x-a\|^2 \quad \text{for all } x \in \mainset,
\end{equation}
where $\gamma_{K,f}$ is the constant from \eqref{EQ_gradHessconsts}.
\end{thm}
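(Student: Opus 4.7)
The plan is to apply the standard second-order Taylor expansion of $f$ around $a$, bound the quadratic remainder using the operator norm of the Hessian, and then replace the pointwise Hessian bound by the uniform bound $\HessConst{f}{\mainset}$.

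First, since $f$ is a polynomial, hence $C^2$ on all of $\R^n$, I would invoke Taylor's formula with either the Lagrange or the integral remainder along the line segment from $a$ to $x$. In the Lagrange form, this yields that for every $x \in \mainset$ there exists a point $\xi$ on the segment $[a,x]$ such that
\begin{equation*}
f(x) = f(a) + \langle \nabla f(a), x-a \rangle + \tfrac{1}{2}(x-a)^\top \nabla^2 f(\xi)(x-a).
\end{equation*}

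Next, I would bound the remainder term. By Cauchy--Schwarz and the definition of the operator (spectral) norm,
\begin{equation*}
\tfrac{1}{2}(x-a)^\top \nabla^2 f(\xi)(x-a) \leq \tfrac{1}{2}\|\nabla^2 f(\xi)\|\cdot \|x-a\|^2.
\end{equation*}
Since the segment $[a,x]$ is contained in $\mainset$ (the intended setting here is that $\mainset$ is convex; otherwise one works with the convex hull of $\mainset$, on which $\|\nabla^2 f\|$ still attains a finite maximum because $f$ is a polynomial), we may majorize $\|\nabla^2 f(\xi)\|$ by $\max_{y \in \mainset}\|\nabla^2 f(y)\| = 2\HessConst{f}{\mainset}$ by the definition in \eqref{EQ_gradHessconsts}. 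Substituting this into the Taylor expansion yields the claimed inequality \eqref{EQ_Taylor_bound}.

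There is no real obstacle here: the result is a direct application of Taylor's theorem combined with the definition of $\HessConst{f}{\mainset}$. The only subtle point is the reliance on the segment $[a,x]$ lying inside $\mainset$, which is automatic in all the applications in the paper (simplices, balls, hypercubes, convex bodies). For a non-convex $\mainset$ one would simply enlarge the maximization in the definition of $\HessConst{f}{\mainset}$ to a convex neighborhood containing $\mainset$, which does not affect the subsequent asymptotic analysis since the constant is absorbed into the $\bigO(\cdot)$ notation.
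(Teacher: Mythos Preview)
Your argument is correct and entirely standard; in fact the paper does not give a proof of this statement at all, treating it as a known variant of Taylor's theorem and simply citing it. Your handling of the convexity issue (needed so that the intermediate point $\xi$ lies in the set over which the Hessian norm is maximized) is appropriate and matches how the result is used throughout the paper.
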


\begin{lem}
\label{LEM_quadraticupperestimator}
Let $a \in \mainset$ be a global minimizer of $\mainfunc$ on $\mainset$. Then $\mainfunc$ has an upper estimator $g$ on $\mainset$ which is exact at $a$ and satisfies the following properties:
\begin{enumerate}
    \item[(i)] $g$ is a quadratic, separable polynomial.
    \item[(ii)] $g(x) \geq \mainfunc(a) + \HessConst{\mainset}{\mainfunc} \|x-a\|^2$ for all $x \in \mainset$.
    \item[(iii)] If $a \in \interior{\mainset}$, then $g(x) \leq \mainfunc(a) + \HessConst{\mainset}{\mainfunc} \|x-a\|^2$ for all $x \in \mainset$.
\end{enumerate}
\end{lem}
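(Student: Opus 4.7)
The plan is to read the estimator off Taylor's theorem directly. I would set
\[
g(x) := f(a) + \langle \nabla f(a), x - a \rangle + \HessConst{\mainfunc}{\mainset}\,\|x - a\|^2.
\]
Theorem~\ref{THM_Taylor} gives $f \le g$ on $\mainset$, while $g(a) = f(a)$ by construction, so $g$ is automatically an upper estimator for $\mainfunc$ that is exact at $a$. Everything else is then a matter of reading off structural properties of $g$ and using the first-order optimality condition at $a$.

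For (i), expanding $\|x-a\|^2 = \sum_{i=1}^n (x_i - a_i)^2$ writes $g$ as the sum of $n$ univariate quadratics plus the constant $f(a)$, so $g$ is a separable quadratic polynomial, as claimed.

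For (ii), one observes that
\[
g(x) - f(a) - \HessConst{\mainfunc}{\mainset}\,\|x-a\|^2 = \langle \nabla f(a), x - a \rangle,
\]
so (ii) reduces to the inequality $\langle \nabla f(a), x - a \rangle \ge 0$ for all $x \in \mainset$. In the setting where this lemma is applied (namely, $\mainset$ a convex body), Lemma~\ref{LEM_first_order_condition} supplies $\nabla f(a) \in \normalcone{\mainset}{a}$, and the defining inequality of the normal cone gives exactly what we want.

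For (iii), if $a \in \interior{\mainset}$ then $a$ is an unconstrained local minimizer of $f$ on some open ball around it, hence $\nabla f(a) = 0$. The middle term of $g$ then vanishes identically and the inequality in (iii) in fact holds with equality. The only step needing external input is (ii), but with Lemma~\ref{LEM_first_order_condition} already in hand this is not a real obstacle; the main design choice is simply to pick the estimator $g$ in the form dictated by Taylor's inequality, so that both Theorem~\ref{THM_Taylor} and the normal-cone condition apply directly without further manipulation.
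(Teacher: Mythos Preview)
Your proof is correct and essentially identical to the paper's: you define the same estimator $g(x) = f(a) + \langle \nabla f(a), x-a\rangle + \HessConst{\mainset}{\mainfunc}\|x-a\|^2$, invoke Theorem~\ref{THM_Taylor} for the upper-estimator property, expand $\|x-a\|^2$ for separability, and use Lemma~\ref{LEM_first_order_condition} (respectively $\nabla f(a)=0$ at interior points) for (ii) and (iii). The only difference is cosmetic---your parenthetical remark about $\mainset$ being a convex body is a reasonable caveat that the paper leaves implicit.
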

\begin{proof}
Consider the function $g$ defined by 
\begin{equation}\label{EQ:g}
    g(x) := \mainfunc(a) + \langle \nabla \mainfunc(a), x-a \rangle + \HessConst{\mainset}{\mainfunc} \|x-a\|^2,
\end{equation}
which is an upper estimator of $\mainfunc$ exact at $a$ by Theorem~\ref{THM_Taylor}. As we have 
$\|x-a\|^2 = \sum_{i}^n (x_i - a_i)^2$, $g$ is indeed a quadratic, separable polynomial.

As $a$ is a global minimizer of $\mainfunc$ on $\mainset$, we know by Lemma~\ref{LEM_first_order_condition} that $\nabla \mainfunc(a) \in \normalcone{\mainset}{a}$. 
This means that $\langle \nabla \mainfunc(a), x-a \rangle \geq 0$ for all $x \in \mainset$, which proves the second property.

If $a \in \interior{\mainset}$, we must have $\nabla \mainfunc (a) = 0$, and the third property follows.
\qedMP\end{proof}
In the special case that $\mainset$ is a ball and $\mainfunc$ has a global minimizer $a$ on the boundary of $\mainset$, we have an upper estimator for $\mainfunc$, exact at $a$, which is a linear polynomial.

\begin{lem}
\label{LEM_linear_upperestimator}
Assume that $f(a) = f_{\min, \ball{\rho}{c}}$ for some $a \in \partial \ball{\rho}{c}$. Then there exists a linear polynomial $g$ with $\mainfunc \leq_a g$ on $\ball{\rho}{c}$.
\end{lem}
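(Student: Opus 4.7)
The plan is to refine the quadratic upper estimator from Lemma~\ref{LEM_quadraticupperestimator} by replacing the term $\gamma\|x-a\|^2$ (where $\gamma := \HessConst{\mainfunc}{\ball{\rho}{c}}$) with a linear function of $x$ that dominates it on $\ball{\rho}{c}$ while remaining tight at $a$. The resulting polynomial will then be a linear upper estimator of $\mainfunc$ on $\ball{\rho}{c}$ exact at $a$.

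First I would establish the key geometric observation: for every $x\in \ball{\rho}{c}$,
$$2\langle x-a,\, c-a\rangle - \|x-a\|^2 \;=\; \rho^2 - \|x-c\|^2 \;\ge\; 0,$$
with equality when $x=a$. This follows from a direct expansion together with the two facts that $\|a-c\|=\rho$ (because $a\in\partial \ball{\rho}{c}$) and $\|x-c\|\le\rho$ (because $x\in \ball{\rho}{c}$). Equivalently, $\|x-a\|^2 \le 2\langle x-a,\, c-a\rangle$ on the ball.

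Next I would define the candidate
$$g(x) \;:=\; \mainfunc(a) + \langle \nabla \mainfunc(a),\, x-a\rangle + 2\gamma\,\langle c-a,\, x-a\rangle,$$
which is visibly a linear polynomial satisfying $g(a)=\mainfunc(a)$. Combining the geometric bound above with Taylor's theorem (Theorem~\ref{THM_Taylor}) applied to $\mainfunc$ at $a$ yields, for every $x\in\ball{\rho}{c}$,
$$g(x) \;\ge\; \mainfunc(a) + \langle \nabla \mainfunc(a),\, x-a\rangle + \gamma\,\|x-a\|^2 \;\ge\; \mainfunc(x),$$
so that $\mainfunc \le_a g$ on $\ball{\rho}{c}$, as required.

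I do not anticipate any real obstacle here: the whole content sits in the elementary quadratic-to-linear comparison $\|x-a\|^2 \le 2\langle x-a, c-a\rangle$, which is specific to the ball and exploits precisely that $a$ lies on its boundary. I observe moreover that the first-order optimality condition (Lemma~\ref{LEM_first_order_condition}) is not strictly needed for the construction, although it is consistent with the direction $c-a$ of the added linear correction, since $\normalcone{\ball{\rho}{c}}{a}$ is exactly the ray spanned by $c-a$.
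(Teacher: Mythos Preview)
Your proof is correct, and it is in fact slightly cleaner than the paper's. Both arguments rest on the same geometric identity
\[
\|x-a\|^2 = \|x-c\|^2 + \rho^2 + 2\langle x-c,\, c-a\rangle \le 2\rho^2 + 2\langle x-c,\, c-a\rangle,
\]
which is equivalent to your inequality $\|x-a\|^2 \le 2\langle x-a,\, c-a\rangle$. The paper, however, first invokes the first-order condition (Lemma~\ref{LEM_first_order_condition}) to write $\nabla f(a) = \lambda(c-a)$ for some $\lambda \ge 0$, and then merges the gradient term and the quadratic term into a single linear expression $f(a) + (\lambda+2\gamma)(\rho^2 + \langle x-c,\, c-a\rangle)$. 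You instead leave the gradient term $\langle \nabla f(a),\, x-a\rangle$ untouched (it is already linear) and linearize only the quadratic term. Your version therefore never uses the optimality of $a$; it applies to any boundary point $a\in\partial \ball{\rho}{c}$, which is strictly more than what the paper obtains even after its Remark~\ref{REM_gradientinnormalcone}. The price is that your linear estimator need not be of the simple form $f(a)+\text{const}\cdot\langle x-c,\, c-a\rangle$, but for the purposes of the lemma this is irrelevant.
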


\begin{proof}
Write $K=\ball{\rho}{c}$ and $\gamma=\gamma_{K,f}$ for simplicity. 
In view of Lemma~\ref{LEM_quadraticupperestimator}, we have $f(x)\le g(x)$ for all $x\in K$, where $g$ is the quadratic polynomial from relation  \eqref{EQ:g}.
Since $a\in \partial K$ is a global minimizer of $\mainfunc$ on $K$, we have $\nabla f(a) \in \normalcone{K}{a}$ by Lemma~\ref{LEM_first_order_condition}, and thus  $\nabla f(a) = \lambda (c-a)$ for some $\lambda \ge 0$. Therefore we have
$$\langle \nabla f(a),x-a\rangle=\langle \lambda (c-a), x-a\rangle=\lambda \rho^2+ \lambda \langle x-c,c-a\rangle.$$
On the other hand, for any $x\in K$ we have
$$\|x-a\|^2 = \|x-c\|^2 + \|c-a\|^2 + 2 \langle x-c, c-a \rangle \le 2\rho^2+2\langle x-c,c-a\rangle.$$
Combining these facts we get
$$f(x)\le g(x)\le f(a)+ (\lambda+2\gamma) (\rho^2+ \langle x-c,c-a\rangle)=: h(x).$$
So $h(x)$ is a linear upper estimator of $f$ with $h(a)=f(a)$, as desired.
\qedMP
\end{proof}

\begin{rem}
\label{REM_gradientinnormalcone}
As can be seen from the above proof, the assumption in Lemma~\ref{LEM_linear_upperestimator} that $a \in \partial \mainset = \partial \ball{\rho}{c}$ is a global minimizer of $\mainfunc$ on $\mainset$ may be replaced by the weaker assumption that $\nabla \mainfunc(a) \in \normalcone{\mainset}{a}$.
\end{rem}
Finally, we give a very simple upper estimator, which will be used in Section~\ref{SEC_GeneralSets}.
\begin{lem}
\label{LEM_Lipschitzupperestimator}
Recall the constant $\gradConst{\mainset}{\mainfunc}$ from \eqref{EQ_gradHessconsts}. Let $a$ be a global minimizer of $\mainfunc$ on $\mainset$. Then we have
\begin{equation}
    \mainfunc(x) \leq_a \mainfunc(a) + \gradConst{\mainset}{\mainfunc} \|x - a \| \quad \text{for all } x \in \mainset.
\end{equation}
\end{lem}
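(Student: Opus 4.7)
The plan is to show that $f$ is Lipschitz on $K$ with constant $\gradConst{\mainset}{f}$ anchored at $a$, which gives both properties of the upper-estimator relation $\leq_a$ at once.

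First I would verify the exactness condition: substituting $x=a$ in the right-hand side yields $f(a) + \gradConst{\mainset}{f} \cdot 0 = f(a)$, so $g(a) = f(a)$. This requires nothing about the minimizer property of $a$; only the pointwise inequality $f(x) \leq g(x)$ will use any nontrivial argument.

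For the inequality, I would parametrize the segment from $a$ to $x$ by $\gamma(t) = a + t(x-a)$ for $t \in [0,1]$ and apply the fundamental theorem of calculus to the smooth function $h(t) := f(\gamma(t))$:
\begin{equation*}
    f(x) - f(a) = h(1) - h(0) = \int_0^1 \langle \nabla f(\gamma(t)), x-a \rangle \, dt.
\end{equation*}
Cauchy--Schwarz then bounds the integrand pointwise by $\|\nabla f(\gamma(t))\| \cdot \|x-a\|$, and if the whole segment $\gamma([0,1])$ lies in $K$ we can replace $\|\nabla f(\gamma(t))\|$ by $\gradConst{\mainset}{f}$. Integrating over $t \in [0,1]$ yields $f(x) - f(a) \leq \gradConst{\mainset}{f} \|x-a\|$, as required. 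Observe that the minimizer hypothesis on $a$ is not actually needed for this direction; it is only stated because that is how the lemma will be invoked in Section~\ref{SEC_GeneralSets}.

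The only subtle point is that the mean-value argument requires the segment $[a,x]$ to be contained in $K$ so that the bound $\|\nabla f(\gamma(t))\| \leq \gradConst{\mainset}{f}$ applies along the whole path. In the applications of interest in Section~\ref{SEC_GeneralSets}, $K$ will be convex (or at least star-shaped with respect to $a$), so this is automatic. More generally, one could enlarge $K$ to an enclosing convex body $\biggerset$ as permitted by the convention fixed in the notation subsection, replacing $\gradConst{\mainset}{f}$ by $\gradConst{\biggerset}{f}$. No other obstacle arises, as the rest of the argument is purely the standard mean-value/Cauchy--Schwarz estimate.
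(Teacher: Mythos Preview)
Your argument is the standard mean-value/Cauchy--Schwarz estimate, and the paper in fact offers no proof at all for this lemma (it is introduced as ``a very simple upper estimator''), so there is nothing to compare against; your proof is correct and is exactly what the authors presumably had in mind.

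Your remark about convexity is not merely a technicality but a genuine observation: as literally stated, the lemma can fail for non-convex $K$. For instance, take $K = [-\epsilon,\epsilon] \cup [1-\epsilon,1+\epsilon] \subseteq \R$ and $f(x) = 3x^2 - 2x^3$; then $a=0$ is the global minimizer, $f'(0)=f'(1)=0$ so $\gradConst{K}{f} = O(\epsilon)$, yet $f(1)-f(0)=1$, violating the claimed bound for small $\epsilon$. Your proposed fix---replacing $\gradConst{K}{f}$ by $\gradConst{\biggerset}{f}$ for a convex $\biggerset \supseteq K$, as the paper's big-$O$ convention explicitly permits---is exactly what is needed for the application in Section~\ref{SEC_GeneralSets} (where one has already normalized $K \subseteq B^n$), and it rescues both the lemma and its downstream use. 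So your proof is correct, and your caveat actually sharpens the paper.
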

\section{Special convex bodies}
    \label{SEC_SpecialConvexBodies}
    In this section we extend the bound $\bigO(1/r^2)$  from \cite{deKlerkLaurent2018}  on $\ubError{r}{\mainset, \mu}{\mainfunc}$, when $\mainset = [-1, 1]^n$ is equipped with  the Chebyshev measure  $d\mu(x) = \prod_{i=1}^n (1-x_i^2)^{-\frac{1}{2}}dx_i$, to a broader class of convex bodies $\mainset$ and reference measures $\mu$.

First, we show that, for the hypercube $K=[-1,1]^n$, we still have $\ubError{r}{\mainset, \mu}{\mainfunc} = \bigO(1/r^2)$ for all $f$ and all measures of the form $d\mu(x) = \prod_{i=1}^n (1-x_i^2)^{\lambda}dx_i$ with $\lambda > -1/2$. Previously this was only known to be the case when $\mainfunc$ is a linear polynomial. Note that, for $\lambda = 0$, we obtain the Lebesgue measure on $[-1, 1]^n$.
Next, we use this result to show that $\ubError{r}{\unitball, \mu}{\mainfunc} = \bigO(1/r^2)$ for all measures $\mu$ on the unit ball $\unitball$ of the form $d\mu(x) = (1-||x||^2)^{\lambda}dx$ with $\lambda \geq 0$. 
We apply this result to also obtain $\ubError{r}{\mainset, \mu}{\mainfunc} = \bigO(1/r^2)$ when $\mu$ is the Lebesgue measure and $\mainset$ is a `ball-like' convex body, meaning it has inscribed and circumscribed tangent balls at all boundary points (see Definition~\ref{DEF:round} below). 
The primary new tool we use to obtain these results is \mbox{Proposition~\ref{PROP_differentmeasure}}, which tells us that the behaviour of $\ubError{r}{\mainset, \mu}{\mainfunc}$ essentially only depends on the \emph{local} behaviour of $\mainfunc$ and $\mu$ in a neighbourhood of a global minimizer $a$ of $\mainfunc$ on $\mainset$.

\subsection{Measures and weight functions}
\label{SEC:weights}
A function  $\measureweight{} : \interior{\mainset} \rightarrow \R_{> 0}$ is a \emph{weight function} on $\mainset$ if it is continuous and satisfies $0 < \int_\mainset \measureweight{}(x) dx < \infty$. A weight function $\measureweight{}$ gives rise to a measure $\lambdameasure{\measureweight{}}$ on $\mainset$ defined by $d\lambdameasure{\measureweight{}}(x) := \measureweight{}(x)dx$. 
We note that if $\mainset \subseteq \biggerset$, and $\widehat{\measureweight{}}$ is a weight function on $\biggerset$, it can naturally be interpreted as a weight function on $\mainset$ as well, by simply restricting its domain (assuming $\int_\mainset \widehat{\measureweight{}}(x) dx > 0$). In what follows we will implicitly make use of this fact.

\begin{defn}
Given two weight functions $\measureweight{}, \widehat{\measureweight{}}$ on $\mainset$ and a point $a \in \mainset$, we say that $\widehat{\measureweight{}} \measureleq{a} \measureweight{}$ on $\mainset$ if there exist constants $\epsilon, m_{a} > 0$ such that
\begin{equation}
\label{EQ_measurelocaldominance}
 m_{a}\widehat{\measureweight{}}(x) \leq \measureweight{}(x) \text{ for all } x \in \ball{\epsilon}{a} \cap \interior{\mainset}.
\end{equation}
If the constant $m_{a}$ can be chosen uniformly, i.e., if there exists a constant $m > 0$ such that
\begin{equation}
\label{EQ_measuredominance}
    m\widehat{\measureweight{}}(x) \leq \measureweight{}(x) \text{ for all } x \in \interior{\mainset},
\end{equation}
then we say that $\widehat{\measureweight{}} \measureleq{} \measureweight{}$ on $\mainset$.
\end{defn}

\begin{rem}
\label{REM_weightdominance}
We note the following facts for future reference:
\begin{enumerate}
	\item[(i)] As weight functions are continuous on the interior of $\mainset$ by definition, we always have $\widehat{\measureweight{}} \measureleq{a} \measureweight{}$ if $a \in \interior{\mainset}$. 
	\item[(ii)] If $\measureweight{}$ is bounded from below, and $\widehat{\measureweight{}}$ is bounded from above on $\interior{\mainset}$, then we automatically have $\widehat{\measureweight{}} \measureleq{} \measureweight{}$.
\end{enumerate}
\end{rem}

\subsection{Local similarity}
\label{SEC:localsimilarity}
Assuming that the global minimizer $a$ of $\mainfunc$ on $\mainset$ is unique, sum-of-squares density functions $\Laspoly$ for which the integral $\int_{\mainset} \Laspoly(x) \mainfunc(x) d \mu(x)$ is small should in some sense approximate the Dirac delta function centered at $a$. 
With this in mind, it seems reasonable to expect that the quality of the bound $\Lasupbound{r}$ depends in essence only on the \emph{local} properties of $\mainset$ and $\mu$ around $a$. 
We formalize this intuition here.

\begin{defn}
Suppose $\mainset \subseteq \biggerset \subseteq \R^n$. Given $a \in \mainset$, we say that $\mainset$ and $\biggerset$ are \emph{locally similar} at $a$, which we denote by   $\mainset \subseteq_a \biggerset$,  if there exists  $\epsilon > 0$ such that 
\begin{equation}
        \label{EQ_localsimilarity}
        \ball{\epsilon}{a} \cap \mainset = \ball{\epsilon}{a} \cap \biggerset.
\end{equation}
Clearly,  $\mainset \subseteq_a \biggerset$ for any point $a\in \interior K$.
\end{defn}
Figure~\ref{FIG_locallysimilar} depicts some examples of locally similar sets. 

\begin{figure}
    \centering
    \begin{tikzpicture}[x=2cm,y=2cm]

	\draw[thick] (0,0) -- (1, 0) -- (1, 1) -- (0,1) -- (0,0);
	\draw[thick] (0,0) -- (1.5, 0) -- (1.5, 1.5) -- (0,1.5) -- (0,0);

	\coordinate (a) at (0,0) ;   
    \fill[red] (a) circle (2pt);
    \draw[thick, dashed] (a) circle (1cm);
    
    \draw[] (1,1) node[below right] {$\mainset$};
    \draw[] (1.5,1.5) node[below right] {$\biggerset$};

	\begin{scope}
		\clip (0,0) -- (1, 0) -- (1, 1) -- (0,1) -- (0,0);

		\fill[opacity=0.2] (0, 0) circle (1cm);
	\end{scope}
\end{tikzpicture}
    \begin{tikzpicture}[x=2cm,y=2cm]

	\coordinate (a) at (0.75,0);

  	\begin{scope}[shift={(a)}]
  		\draw[thick] (-0.75,0) -- (-0.75, 0.6) -- (0.75, 0.6) -- (0.75,0) -- (-0.75,0);
  		\draw[thick] (0.75, 0.6) node[below right] {$\mainset$};
    	\draw[thick] (0.9,1) node[] {$\biggerset$};
	\end{scope}
  
  	\begin{scope}[shift={(a)}]
	\draw[thick] (-1.2, 0) -- (1.2,0);
  	\clip (-1.3, 0) -- (-1.3, 1.3) -- (1.3, 1.3) -- (1.3, 0) -- cycle;
  	\draw[thick] (0,0) circle (2.4cm);
	\end{scope}

    \draw[dashed, thick] (a) circle (1cm);

	\begin{scope}[shift={(a)}]
		\clip (-0.75,0) -- (-0.75, 0.5) -- (0.75, 0.5) -- (0.75,0) -- (-0.75,0);
		\fill[opacity=0.2] (0, 0) circle (1cm);
	\end{scope}
	
    \fill[red] (a) circle (2pt);    
\end{tikzpicture}
    \caption{Some examples of sets $\mainset, \biggerset$ for which $\mainset \subseteq_a \biggerset$. The red dot indicates the point $a$, and the gray area indicates $\ball{\epsilon}{a} \cap \mainset$.}
    \label{FIG_locallysimilar}
\end{figure}

\begin{prop}
\label{PROP_differentmeasure}
Let $\mainset \subseteq \biggerset \subseteq \R^n$, let $a \in \mainset$ be a global minimizer of $\mainfunc$ on $\mainset$ and assume $K\subseteq _a \biggerset$.
Let $\measureweight{}, \widehat{\measureweight{}}$ be two weight functions on $\mainset, \biggerset$, respectively. Assume that $\widehat{\measureweight{}}(x) \geq \measureweight{}(x)$ for all $x \in \interior{\mainset}$, and that $\widehat{\measureweight{}} \measureleq{a} \measureweight{}$. 
Then there exists an upper estimator $g$ of $\mainfunc$ on $\biggerset$ which is exact at $a$ and satisfies
\begin{equation}
   \ubError{r}{\mainset, \measureweight{}}{g} \leq \frac{2}{m_a}  \ubError{r}{\biggerset, \widehat{\measureweight{}}}{g}
\end{equation}
for all $r \in \N$ large enough. Here $m_a > 0$ is 
the constant defined by \eqref{EQ_measurelocaldominance}.
\end{prop}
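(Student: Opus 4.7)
The plan is to exhibit an explicit near-optimal SOS density for the $(\mainset, \measureweight{})$ problem obtained by rescaling a near-optimal density from the larger space $(\biggerset, \widehat{\measureweight{}})$. First, by Remark~\ref{REM_reducetoestimator}(2) I may assume $\mainfunc(a) = \mainfuncmin = 0$. Following the proof of Lemma~\ref{LEM_quadraticupperestimator} but with the Hessian constant taken over $\biggerset$, I would take
\[
    g(x) := \langle \nabla \mainfunc(a), x-a \rangle + \HessConst{\biggerset}{\mainfunc}\,\|x-a\|^2,
\]
which by Theorem~\ref{THM_Taylor} applied on $\biggerset$ is an upper estimator of $\mainfunc$ there, exact at $a$. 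The essential extra property I need is that $a$ is a global minimizer of $g$ on $\biggerset$ (so that $g \geq 0$ on $\biggerset$ and $g_{\min,\mainset} = g_{\min,\biggerset} = 0$). Since $g$ is convex with $\nabla g(a) = \nabla \mainfunc(a) \in \normalcone{\mainset}{a}$ by Lemma~\ref{LEM_first_order_condition}, this follows from observing that, under the (implicit) convexity of $\biggerset$ together with the local similarity $\mainset \subseteq_a \biggerset$, the normal cones $\normalcone{\mainset}{a}$ and $\normalcone{\biggerset}{a}$ coincide, as they are determined by the supporting hyperplanes at $a$, which depend only on the local geometry near $a$.

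With $g$ in hand, let $\Laspolye{r} \in \Sigma_r$ be a (near-)optimal density for $g$ on $(\biggerset, \widehat{\measureweight{}})$, so $\int_{\biggerset}\Laspolye{r}\widehat{\measureweight{}}dx = 1$ and $\int_{\biggerset} g\Laspolye{r}\widehat{\measureweight{}}dx = \ubError{r}{\biggerset, \widehat{\measureweight{}}}{g}$. Set $Z := \int_{\mainset}\Laspolye{r}\measureweight{}dx$; then $\Laspolye{r}/Z$ is a valid SOS density of degree at most $2r$ on $(\mainset, \measureweight{})$. Using $\widehat{\measureweight{}} \geq \measureweight{}$ on $\interior{\mainset}$ together with $g \geq 0$ on $\biggerset$, this yields
\[
    \ubError{r}{\mainset, \measureweight{}}{g} \;\leq\; \frac{1}{Z}\int_{\mainset} g\Laspolye{r}\measureweight{}\,dx \;\leq\; \frac{1}{Z}\int_{\biggerset} g\Laspolye{r}\widehat{\measureweight{}}\,dx \;=\; \frac{\ubError{r}{\biggerset, \widehat{\measureweight{}}}{g}}{Z},
\]
so the whole proof is reduced to establishing $Z \geq m_a/2$ for $r$ sufficiently large.

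To lower bound $Z$, I would combine the local dominance $m_a\widehat{\measureweight{}} \leq \measureweight{}$ on $\ball{\epsilon}{a} \cap \interior{\mainset}$ with the local similarity $\mainset \cap \ball{\epsilon}{a} = \biggerset \cap \ball{\epsilon}{a}$ to obtain
\[
    Z \;\geq\; m_a \int_{\mainset \cap \ball{\epsilon}{a}} \Laspolye{r}\widehat{\measureweight{}}\,dx \;=\; m_a\bigg(1 - \int_{\biggerset \setminus \ball{\epsilon}{a}} \Laspolye{r}\widehat{\measureweight{}}\,dx\bigg).
\]
The linear part of $g$ is non-negative on $\biggerset$ (again by the normal-cone condition), so $g(x) \geq \HessConst{\biggerset}{\mainfunc}\,\epsilon^2$ on $\biggerset \setminus \ball{\epsilon}{a}$; Markov's inequality then gives
\[
    \int_{\biggerset \setminus \ball{\epsilon}{a}} \Laspolye{r}\widehat{\measureweight{}}\,dx \;\leq\; \frac{\ubError{r}{\biggerset, \widehat{\measureweight{}}}{g}}{\HessConst{\biggerset}{\mainfunc}\,\epsilon^2},
\]
which tends to $0$ as $r \to \infty$ by the convergence \eqref{EQ_converging_sequence}. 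For $r$ large enough this bound is at most $1/2$, producing $Z \geq m_a/2$ and hence the desired inequality.

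The main obstacle is securing the very first step, namely that $a$ remains a global minimizer of $g$ on the larger set $\biggerset$, which rests on the coincidence of normal cones under local similarity and convexity. Once that is in place the remainder is a clean Markov-truncation argument that makes rigorous the intuition that a near-optimal density on $(\biggerset, \widehat{\measureweight{}})$ must concentrate near $a$, where the two ambient environments $(\mainset, \measureweight{})$ and $(\biggerset, \widehat{\measureweight{}})$ are indistinguishable.
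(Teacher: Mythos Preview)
Your proposal is correct and follows essentially the same route as the paper: the same quadratic upper estimator $g$, the same rescaling of an optimal density from $(\biggerset,\widehat{\measureweight{}})$ to $(\mainset,\measureweight{})$, and the same Markov-type tail bound (the paper's Lemma~\ref{LEM_ballweight}) to show $Z \geq m_a/2$ for large $r$. The only cosmetic difference is that you assert the normal cones coincide, whereas the paper (Lemma~\ref{LEM_normalconelocalsimilarity}) proves and uses only the inclusion $\normalcone{\mainset}{a} \subseteq \normalcone{\biggerset}{a}$, which already suffices to get $\nabla f(a) \in \normalcone{\biggerset}{a}$ and hence $g \geq \HessConst{\biggerset}{\mainfunc}\,\|x-a\|^2$ on $\biggerset$.
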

Recall that if $g$ is an upper estimator for $\mainfunc$ which is exact at one of its global minimizers, we then have $\ubError{r}{\mainset, \measureweight{}}{\mainfunc} \leq \ubError{r}{\mainset, \measureweight{}}{g}$ by Lemma~\ref{LEM_upperestimator}. Proposition~\ref{PROP_differentmeasure} then allows us to bound $\ubError{r}{\mainset, \measureweight{}}{\mainfunc}$ in terms of $\ubError{r}{\biggerset, \widehat{\measureweight{}}}{g}$. For its proof, we first need the following lemma.

\begin{lem}
Let $a \in \mainset$, and assume that $\mainset \subseteq_a \biggerset$. Then any normal vector of $\mainset$ at $a$ is also a normal vector of $\biggerset$. That is, $\normalcone{\mainset}{a} \subseteq \normalcone{\biggerset}{a}.$
\label{LEM_normalconelocalsimilarity}
\end{lem}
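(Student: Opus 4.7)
The plan is to unfold the definition of the normal cone and exploit the local similarity \eqref{EQ_localsimilarity} together with convexity of $\widehat{K}$ (which is the natural setting in which $\normalcone{\widehat K}{a}$ is used in this paper). Concretely, take an arbitrary $v \in \normalcone{\mainset}{a}$ and an arbitrary $y \in \widehat{K}$; the goal is to show $\langle v, y - a \rangle \geq 0$.

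For points $y$ that already lie in the ball $\ball{\epsilon}{a}$, the argument is immediate: from $y \in \ball{\epsilon}{a} \cap \widehat{K} = \ball{\epsilon}{a} \cap \mainset$ we get $y \in \mainset$, and so $\langle v, y - a \rangle \geq 0$ by definition of $\normalcone{\mainset}{a}$. The interesting case is when $y$ is far from $a$. Here I would use convexity of $\widehat K$ and the fact that $a \in \mainset \subseteq \widehat K$ to conclude that the segment $[a,y]$ lies entirely in $\widehat{K}$. Then for any sufficiently small $t \in (0,1)$, the convex combination $a + t(y - a)$ lies in $\ball{\epsilon}{a} \cap \widehat{K}$, hence in $\ball{\epsilon}{a} \cap \mainset \subseteq \mainset$ by \eqref{EQ_localsimilarity}. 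Applying $v \in \normalcone{\mainset}{a}$ at this point yields
\begin{equation*}
	0 \leq \langle v, (a + t(y-a)) - a \rangle = t \langle v, y - a \rangle,
\end{equation*}
and dividing by $t > 0$ gives $\langle v, y - a \rangle \geq 0$, as wanted.

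There is no real obstacle: the argument is essentially the standard observation that, for convex sets, the normal cone is determined by the behaviour of the set in an arbitrarily small neighbourhood of the base point. The only subtlety worth flagging is that convexity of $\widehat{K}$ is used to guarantee that the segment $[a, y]$ lies in $\widehat{K}$, allowing us to shrink $y$ into the ball $\ball{\epsilon}{a}$ while staying in $\widehat{K}$; once inside that ball, the local similarity hypothesis transfers the point into $\mainset$ and the defining inequality of $\normalcone{\mainset}{a}$ does the rest.
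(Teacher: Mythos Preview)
Your argument is correct and essentially identical to the paper's: both shrink an arbitrary point $y\in\widehat K$ toward $a$ along the segment $[a,y]$ (using convexity of $\widehat K$) until it lands in $\ball{\epsilon}{a}\cap\widehat K=\ball{\epsilon}{a}\cap K$, and then apply the defining inequality of $\normalcone{K}{a}$ together with the scaling $\langle v, t(y-a)\rangle = t\langle v, y-a\rangle$. The only cosmetic difference is that the paper phrases this as a proof by contradiction, while you give the direct version.
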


\begin{proof}
Let $v \in \normalcone{\mainset}{a}$. Suppose for  contradiction that $v \not \in \normalcone{\biggerset}{a}$. Then, by definition of the normal cone, there exists  $y \in \biggerset$ such that $\langle v, y-a \rangle < 0$. 
As $\mainset \subseteq_a \biggerset$, there exists  $\epsilon > 0$ for which $\mainset \cap \ball{a}{\epsilon} = \biggerset \cap \ball{a}{\epsilon}$. 
Now choose $1> \eta > 0$ small enough such that $y' := \eta y + (1-\eta)a \in \ball{a}{\epsilon}$ . Then, by convexity, we have $y' \in \biggerset \cap \ball{a}{\epsilon} = \mainset \cap \ball{a}{\epsilon}$. Now, we have
$\langle v, y'-a \rangle = \eta \langle v, y-a\rangle < 0.$ 
But, as $y' \in K$,  this contradicts the assumption that $v \in \normalcone{\mainset}{a}$. 
\qedMP \end{proof}

\begin{proof}[of Proposition~\ref{PROP_differentmeasure}]
For simplicity, we assume here $\mainfunc(a) = 0$, which is without loss of generality by Remark~\ref{REM_reducetoestimator}.
Consider the quadratic polynomial $g$ from \eqref{EQ:g}: 
\begin{equation}
	g(x) = \langle \nabla \mainfunc(a), x - a \rangle + \gamma ||x-a||^2,
\end{equation}
where $\gamma := \HessConst{\biggerset}{\mainfunc}$ is  defined in \eqref{EQ_gradHessconsts}.
By Taylor's theorem (Theorem~\ref{THM_Taylor}), we have that $g(x) \geq \mainfunc(x)$ for all $x \in \biggerset$, and clearly $g(a) = \mainfunc(a)$. That is, $g$ is an upper estimator for $\mainfunc$ on $\biggerset$, exact at $a$ (cf. Lemma \ref{LEM_quadraticupperestimator}).
We proceed to show that 
\begin{equation}
   \ubError{r}{\mainset, \measureweight{}}{g} \leq \frac{2}{m_a}  \ubError{r}{\biggerset, \widehat{\measureweight{}}}{g}.
\end{equation}
We start by selecting a degree $2r$ sum-of-squares polynomial $\widehat{\Laspolye{r}}$ satisfying
$$\int_{\biggerset} \widehat{\Laspolye{r}}(x) \widehat{\measureweight{}}(x) dx = 1 \quad \text{ and }\quad
    \int_{\biggerset} g(x) \widehat{\Laspolye{r}}(x) \widehat{\measureweight{}}(x) dx = \ubError{r}{\biggerset, \widehat{\measureweight{}}}{g}. $$
We may then rescale $\widehat{\Laspolye{r}}$ to obtain a density function $\Laspolye{r} \in \Sigma_r$ on $\mainset$ w.r.t. $\measureweight{}$ by setting
\begin{equation}
    \Laspolye{r} := \frac{\widehat{\Laspolye{r}}}{\int_\mainset \widehat{\Laspolye{r}}(x) \measureweight{}(x) dx}.
\end{equation}
By assumption,  $\measureweight{}(x) \leq \widehat{\measureweight{}}(x)$ for all $x\in \interior K$. Moreover,  $g(x) \geq \mainfunc(a) = 0$ for all $x \in \interior{\mainset}$.
This implies that
\begin{equation}
    \ubError{r}{\mainset, \measureweight{}}{g} \leq \int_{\mainset} g(x) \Laspolye{r}(x)  \measureweight{}(x) dx 
    \leq \frac{\int_{\biggerset}  g(x) \widehat{\Laspolye{r}}(x) \widehat{\measureweight{}}(x) dx}{\int_\mainset \widehat{\Laspolye{r}}(x) \measureweight{}(x) dx}
    = \frac{\ubError{r}{\biggerset, \widehat{\measureweight{}}}{g}}{\int_\mainset \widehat{\Laspolye{r}}(x) \measureweight{}(x) dx}
\end{equation}
and thus it suffices to show that $\int_\mainset \widehat{ \Laspolye{r}}(x) \measureweight{}(x)dx \geq \frac{1}{2}m_a$.
The key to proving this bound is the following lemma, which tells us that optimum sum-of-squares densities should assign rather high weight to the ball $\ball{\epsilon}{a}$ around $a$.
\begin{lem}
\label{LEM_ballweight}
Let $\epsilon > 0$. Then, for any $r \in \N$, we have
\begin{equation}
    \int_{\ball{\epsilon}{a} \cap \biggerset} \widehat{\Laspolye{r}}(x) \widehat{\measureweight{}}(x)dx \geq 
    1 - \frac{\ubError{r}{\biggerset, \widehat{\measureweight{}}}{g}} {\gamma \epsilon^2}.
\end{equation}
\end{lem}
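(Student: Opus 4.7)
The plan is to use a Markov-style inequality. The key observation is that on the complement $\biggerset \setminus \ball{\epsilon}{a}$, the quadratic upper estimator $g$ is bounded below by $\gamma\epsilon^2$, so the total integral of $g$ against the density cannot be too small without the density assigning most of its mass inside $\ball{\epsilon}{a}$.

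First, I would establish that $g(x) \geq 0$ for all $x \in \biggerset$. The quadratic term $\gamma\|x-a\|^2$ is trivially nonnegative, so this reduces to showing $\langle \nabla f(a), x-a\rangle \geq 0$ on $\biggerset$. Since $a$ is a global minimizer of $f$ on $K$, Lemma~\ref{LEM_first_order_condition} yields $\nabla f(a) \in \normalcone{K}{a}$; and since $K \subseteq_a \biggerset$, Lemma~\ref{LEM_normalconelocalsimilarity} gives $\normalcone{K}{a} \subseteq \normalcone{\biggerset}{a}$, hence $\nabla f(a) \in \normalcone{\biggerset}{a}$, which is precisely the statement that $\langle \nabla f(a), x-a\rangle \geq 0$ for every $x \in \biggerset$.

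Next, for any $x \in \biggerset$ with $\|x-a\| \geq \epsilon$, combining the two nonnegative parts of $g$ yields $g(x) \geq \gamma\|x-a\|^2 \geq \gamma\epsilon^2$. Restricting the definition of $\ubError{r}{\biggerset,\widehat{w}}{g}$ to this region then gives
\begin{equation*}
\ubError{r}{\biggerset, \widehat{w}}{g} \;=\; \int_{\biggerset} g(x)\,\widehat{q_r}(x)\,\widehat{w}(x)\,dx \;\geq\; \int_{\biggerset \setminus \ball{\epsilon}{a}} g(x)\,\widehat{q_r}(x)\,\widehat{w}(x)\,dx \;\geq\; \gamma\epsilon^2 \int_{\biggerset \setminus \ball{\epsilon}{a}} \widehat{q_r}(x)\,\widehat{w}(x)\,dx,
\end{equation*}
where the first inequality uses that $g\,\widehat{q_r}\,\widehat{w} \geq 0$ on all of $\biggerset$ (thanks to Step 1 and the fact that $\widehat{q_r}$ is a sum of squares and $\widehat{w}$ is a weight function).

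Finally, since $\widehat{q_r}$ is a density on $\biggerset$ with respect to $\widehat{w}$, we have $\int_{\biggerset} \widehat{q_r}\widehat{w}\,dx = 1$, so
\begin{equation*}
\int_{\ball{\epsilon}{a}\cap\biggerset} \widehat{q_r}(x)\,\widehat{w}(x)\,dx \;=\; 1 - \int_{\biggerset\setminus\ball{\epsilon}{a}} \widehat{q_r}(x)\,\widehat{w}(x)\,dx \;\geq\; 1 - \frac{\ubError{r}{\biggerset,\widehat{w}}{g}}{\gamma\epsilon^2},
\end{equation*}
which is the claimed bound. The proof is essentially a one-line Chebyshev-type argument; the only subtle point is verifying nonnegativity of $g$ on the \emph{enlarged} set $\biggerset$ (not just on $K$), and this is exactly where the local similarity hypothesis $K \subseteq_a \biggerset$ and Lemma~\ref{LEM_normalconelocalsimilarity} enter critically. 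I do not anticipate any serious obstacle.
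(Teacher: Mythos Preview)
Your proposal is correct and follows essentially the same approach as the paper: use $\nabla f(a)\in\normalcone{\mainset}{a}\subseteq\normalcone{\biggerset}{a}$ (via Lemmas~\ref{LEM_first_order_condition} and~\ref{LEM_normalconelocalsimilarity}) to get $g(x)\ge\gamma\|x-a\|^2$ on $\biggerset$, then bound the integral over $\biggerset\setminus\ball{\epsilon}{a}$ below by $\gamma\epsilon^2$ times the mass there, and rearrange using $\int_{\biggerset}\widehat{q_r}\widehat{w}\,dx=1$. The only implicit step worth making explicit is that $\ubError{r}{\biggerset,\widehat{w}}{g}=\int_{\biggerset} g\,\widehat{q_r}\,\widehat{w}\,dx$ relies on $g_{\min,\biggerset}=0$, which indeed follows from $g(a)=0$ together with the nonnegativity of $g$ on $\biggerset$ you established.
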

\begin{proof}
By Lemma~\ref{LEM_first_order_condition}, we have $\nabla \mainfunc(a) \in \normalcone{\mainset}{a}$ and so $\nabla \mainfunc(a) \in \normalcone{\biggerset}{a}$ by Lemma~\ref{LEM_normalconelocalsimilarity}. As a result, we have 
$g(x) \geq \gamma ||x-a||^2$ for all  $ x \in \biggerset$ (cf. Lemma \ref{LEM_quadraticupperestimator}).
In particular, this implies  that $g(x) \geq  \gamma||x-a||^2 \geq  \gamma \epsilon^2$ for all $x \in \biggerset \setminus \ball{\epsilon}{a}$ and so
\begin{align}
    \ubError{r}{\biggerset, \widehat{\measureweight{}}}{g}
    \geq \int_{\biggerset \setminus \ball{\epsilon}{a}} g(x) \widehat{\Laspolye{r}}(x)  \widehat{\measureweight{}}(x) dx 
    &\geq \gamma\epsilon^2 \int_{\biggerset \setminus \ball{\epsilon}{a}} \widehat{\Laspolye{r}}(x) \widehat{\measureweight{}}(x) dx  \\
    &= \gamma\epsilon^2 \bigg(1 - \int_{\ball{\epsilon}{a} \cap \biggerset} \widehat{\Laspolye{r}}(x) \widehat{\measureweight{}}(x) dx\bigg).
\end{align}
The statement now follows from reordering terms.
\qedMP\end{proof} \noindent

As $\mainset \subseteq_a \biggerset$, there exists  $\epsilon_1 > 0$ such that $\ball{\epsilon_1}{a} \cap \mainset = \ball{\epsilon_1}{a} \cap \biggerset$. 
As $\widehat{\measureweight{}} \measureleq{a} \measureweight{}$, there exist $\epsilon_2 > 0$, $m_a > 0$ such that $m_a \widehat{\measureweight{}}(x) \leq \measureweight{}(x)$ for $x \in \ball{\epsilon_2}{a} \cap \interior{\mainset}$. 
Set $\epsilon = \min\{\epsilon_1, \epsilon_2 \}$. Choose $r_0 \in \N$ large enough such that 
$\ubError{r}{\biggerset, \widehat{\measureweight{}}}{g} < {\epsilon^2\gamma\over 2}$ for all $r\ge r_0$,
which is possible since $\ubError{r}{\biggerset, \widehat{\measureweight{}}}{g}$ tends to $0$ as $r\to\infty$.
Then, Lemma~\ref{LEM_ballweight}  yields 
\begin{equation}
    \int_{\ball{\epsilon}{a} \cap \biggerset} \widehat{ \Laspolye{r}}(x) \widehat{\measureweight{}}(x) dx \geq \frac{1}{2}
\end{equation} for all $r \geq r_0$. 
Putting things together yields the desired lower bound:
\begin{equation}
    \int_{\mainset} \widehat{ \Laspolye{r}}(x) {\measureweight{}}(x) dx \geq \int_{\ball{\epsilon}{a} \cap \mainset} \widehat{ \Laspolye{r}}(x) {\measureweight{}}(x) dx \geq m_a\int_{\ball{\epsilon}{a}\cap \biggerset} \widehat{ \Laspolye{r}}(x) \widehat{\measureweight{}}(x)dx \geq \frac{1}{2}m_a.
\end{equation}
for all  $r \geq r_0$.
\qedMP\end{proof}

\begin{cor}
\label{COR_uniformdifferentmeasure}
Let $\mainset \subseteq \biggerset \subseteq \R^n$,  let $a \in \mainset$ be a global minimizer of $\mainfunc$ on $\mainset$, and assume
that $K\subseteq _a \biggerset$.
Let $\measureweight{}, \widehat{\measureweight{}}$ be two weight functions on $\mainset, \biggerset$, respectively. Assume that $\widehat{\measureweight{}}(x) \geq \measureweight{}(x)$ for all $x \in \interior{\mainset}$ and that $\widehat{\measureweight{}} \measureleq{} \measureweight{}$. 
Then there exists an upper estimator $g$ of $\mainfunc$ on $\biggerset$, exact at $a$, such that  
\begin{equation}
   \ubError{r}{\mainset, \measureweight{}}{g} \leq \frac{2}{m}  \ubError{r}{\biggerset, \widehat{\measureweight{}}}{g}
\end{equation}
for all $r \in \N$ large enough. Here $m > 0$ is the constant defined by \eqref{EQ_measuredominance}.
\end{cor}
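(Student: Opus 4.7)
The plan is to derive this corollary as an immediate consequence of Proposition~\ref{PROP_differentmeasure}. The key observation is that the uniform dominance condition $\widehat{\measureweight{}} \measureleq{} \measureweight{}$ is strictly stronger than the local dominance condition $\widehat{\measureweight{}} \measureleq{a} \measureweight{}$ used there: indeed, if $m > 0$ is a constant satisfying $m\widehat{\measureweight{}}(x) \leq \measureweight{}(x)$ for all $x \in \interior{\mainset}$, then for the given minimizer $a \in K$ one may choose in the definition of local dominance any radius $\epsilon > 0$ and set $m_a := m$; the inclusion $\ball{\epsilon}{a} \cap \interior{\mainset} \subseteq \interior{\mainset}$ ensures inequality~\eqref{EQ_measurelocaldominance} holds with this choice.

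Consequently, all the hypotheses of Proposition~\ref{PROP_differentmeasure} are satisfied. Applying it directly produces an upper estimator $g$ of $\mainfunc$ on $\widehat{\mainset}$, exact at $a$, such that
\begin{equation}
\ubError{r}{\mainset, \measureweight{}}{g} \leq \frac{2}{m_a}\ubError{r}{\widehat{\mainset}, \widehat{\measureweight{}}}{g}
\end{equation}
for all $r \in \N$ large enough. Substituting $m_a = m$ yields the claimed bound.

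There is no real obstacle here: the proof is essentially a one-line reduction. The only minor point worth emphasizing is that, since the constant $m$ is uniform, the particular choice of $\epsilon$ in the local similarity condition $K \subseteq_a \widehat K$ dictates the threshold $r_0$ from which the bound becomes valid, but no further analysis is needed beyond invoking the previous proposition.
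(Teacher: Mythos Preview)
Your proposal is correct and matches the paper's intent: the corollary is stated immediately after Proposition~\ref{PROP_differentmeasure} without proof, precisely because the uniform condition $\widehat{\measureweight{}} \measureleq{} \measureweight{}$ trivially implies the local condition $\widehat{\measureweight{}} \measureleq{a} \measureweight{}$ with $m_a = m$, exactly as you observe.
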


\subsection{The unit cube}

Here we consider optimization over the hypercube $K=[-1,1]^n$ and  we 
restrict to reference measures on $K$ having a weight function of the form
\begin{equation}
\label{EQ_boxweights}
\widehat {w_{\lambda}}(x):=\prod_{i=1}^nw_{\lambda}(x_i)= \prod_{i=1}^n (1-x_i^2)^{\lambda} 
\end{equation}
with $\lambda>-1$.
The following result is shown  in \cite{deKlerkLaurent2018} on the convergence rate of the bound   $\ubError{r}{\mainset, \widehat{\measureweight{\lambda}}}{f}$ when using the measure  $\widehat{w_{\lambda}}(x) dx $ on $K=[-1,1]^n.$

\begin{thm}[\hspace{1sp}\cite{deKlerkLaurent2018}]
\label{THM_Cheby_box}
Let $\mainset = [-1, 1]^n$ and consider the weight function $\widehat{\measureweight{\lambda}}$ from \eqref{EQ_boxweights}.
\begin{enumerate}
    \item[(i)] If $\lambda=-\frac{1}{2}$, then we have:
    \begin{equation}
        \label{EQ_THM_Cheby_box}
        \ubError{r}{\mainset, \widehat{\measureweight{\lambda}}}{f} = \bigO\bigg(\frac{1}{r^2}\bigg).
    \end{equation}
    \item[(ii)] If $n=1$ and $\mainfunc$ has a global minimizer on the boundary of $[-1, 1]$, then \eqref{EQ_THM_Cheby_box} holds for all $\lambda > -1$.
\end{enumerate}
\end{thm}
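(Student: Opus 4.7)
The result is stated and proved in \cite{deKlerkLaurent2018}; the plan below recovers it by combining the upper-estimator machinery of Section~\ref{SEC_Preliminaries} with the eigenvalue reformulation $\Lasupbound{r}=\lambda_{\min}(M_{r,\mainfunc})$ recalled in Section~\ref{SEC_relatedwork}.

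For part~(i), I would first pick a global minimizer $a\in[-1,1]^n$ of $\mainfunc$ and invoke Lemmas~\ref{LEM_upperestimator} and~\ref{LEM_quadraticupperestimator} to replace $\mainfunc$ by a separable quadratic upper estimator
\[
g(x)=\mainfunc(a)+\sum_{i=1}^{n}\bigl(c_i(x_i-a_i)+\gamma(x_i-a_i)^2\bigr)=\sum_{i=1}^{n} g_i(x_i),
\]
exact at~$a$, where $\gamma$ is the Hessian constant. Since $\widehat{w_{-1/2}}$ is a product of univariate Chebyshev weights $w_{-1/2}(x_i)$, feeding a product SOS density $q(x)=\prod_{i=1}^{n} q_i(x_i)$ into the definition of the bound gives
\[
\ubError{r}{[-1,1]^n,\widehat{w_{-1/2}}}{g}\ \leq\ \sum_{i=1}^{n}\ubError{r}{[-1,1],w_{-1/2}}{g_i},
\]
reducing the multivariate claim to the univariate bound $\bigO(1/r^2)$ for each quadratic $g_i$ on $[-1,1]$ with the Chebyshev weight.

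For the univariate quadratic step I would pass to the eigenvalue reformulation. The orthonormal basis for $w_{-1/2}$ is (up to normalization) the Chebyshev polynomials $\Cheby{k}$, and iterating the recurrence \eqref{eq:3term} gives $x^2\Cheby{k}=\tfrac14(\Cheby{k+2}+2\Cheby{k}+\Cheby{k-2})$. Consequently $M_{r,g_i}$ is pentadiagonal with explicitly known entries, and a quasi-cosine trial vector (discretizing an eigenfunction of the shift operator $x\cdot$) drives the Rayleigh quotient down to $\lambda_{\min}(M_{r,g_i})-g_{i,\min}=\bigO(1/r^2)$. This is the technical core of \cite{deKlerkLaurent2018} and finishes part~(i).

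For part~(ii), with $n=1$ and a global minimizer $a\in\{-1,+1\}$, Lemma~\ref{LEM_linear_upperestimator} produces a linear upper estimator $g(x)=\mainfunc(a)+\alpha(x-a)$ exact at~$a$, and Lemma~\ref{LEM_upperestimator} reduces the task to bounding $\ubError{r}{[-1,1],w_\lambda}{g}$. The orthonormal polynomials for $w_\lambda(x)dx=(1-x^2)^\lambda dx$ are the Gegenbauer polynomials $P_k^{(\lambda,\lambda)}$, which obey a three-term recurrence $xP_k=\alpha_kP_{k+1}+\alpha_{k-1}P_{k-1}$ with $\alpha_k\to\tfrac12$; hence $M_{r,g}$ is tridiagonal (a perturbed Jacobi matrix) and the same quasi-cosine trial vector yields $\bigO(1/r^2)$. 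The main obstacle in the whole plan is therefore the univariate eigenvalue estimate: one must exhibit a near-eigenvector of a perturbed Jacobi matrix whose Rayleigh quotient lies within $\bigO(1/r^2)$ of its spectral minimum, using endpoint (Plancherel--Rotach type) asymptotics of Jacobi polynomials. Everything else is a routine use of separability and of the upper-estimator lemmas of Section~\ref{SEC_Preliminaries}.
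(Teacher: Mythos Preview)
Your proposal is correct and aligns with the approach the paper attributes to \cite{deKlerkLaurent2018}: the paper does not give its own full proof of this cited result, but the brief sketch it provides (linear upper estimator at a boundary minimizer for part~(ii), reduction to univariate via separable quadratic estimators and product densities, then the tridiagonal/pentadiagonal eigenvalue analysis with Jacobi-type trial vectors) is exactly what you outline. One minor bookkeeping point: the product density $\prod_i q_i(x_i)$ with each $q_i\in\Sigma_r$ lies in $\Sigma_{nr}$, so your displayed inequality should read $\ubError{nr}{[-1,1]^n,\widehat{w_{-1/2}}}{g}\le\sum_i\ubError{r}{[-1,1],w_{-1/2}}{g_i}$; the factor $n$ is of course absorbed into the $\bigO(1/r^2)$ conclusion.
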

The key ingredients for claim (ii) above are: (a)  when the global minimizer is a boundary point of $[-1,1]$ then $\mainfunc$ has a linear upper estimator (recall Lemma~\ref{LEM_linear_upperestimator}), and (b) the convergence rate of 
\eqref{EQ_THM_Cheby_box} holds for any linear function and any $\lambda>-1$ (see \cite{deKlerkLaurent2018}).

In this section we show Theorem~\ref{THM_general_box} below, which extends the above result to all weight functions $\widehat{\measureweight{\lambda}}(x)$ with $\lambda \geq -\frac{1}{2}$.
Following the approach in \cite{deKlerkLaurent2018}, we proceed in two steps: first  we reduce to the univariate case, and then we deal with the univariate case.
Then the new situation to be dealt with is when $n=1$ and the minimizer lies in the interior of $[-1,1]$, which we can settle by getting back to the case $\lambda=-\frac{1}{2}$ through applying Proposition~\ref{PROP_differentmeasure}, the `local similarity' tool, with $\mainset = \biggerset = [-1,1]$.

\medskip
\noindent
{\bf Reduction to the univariate case.}
Let $a\in K$ be a global minimizer of $f$ in $K=[-1,1]^n$.  Following \cite{deKlerkLaurent2018} (recall Remark~\ref{REM_reducetoestimator} and
Lemma~\ref{LEM_quadraticupperestimator}),
we consider the upper estimator $\mainfunc(x) \leq_a g(x) := \mainfunc(a) + \langle \nabla \mainfunc(a) , x-a \rangle + \gamma_{f,K} ||x-a||^2$.
This $g$ is separable, i.e.,  we can write 
$g(x) = \sum_{i = 1}^n g_i(x_i)$, where each $g_i$ is quadratic univariate with $a_i$ as global minimizer over $[-1,1]$. 
Let $\Laspolye{r}^i$ be an optimum solution to the problem \eqref{EQ_fmin_measures_bounded} corresponding to the minimization of $g_i$ over $[-1,1]$ w.r.t. the weight function $w_{\lambda}(x_i)=(1-x_i^2)^{\lambda}$. 
If we set $q_r(x) = \prod_{i=1}^n q^i_r(x_i)$, then $q_r$ is a sum of squares with degree at most $nr$, such that $\int_\mainset q_r(x)\widehat{w_{\lambda}}(x)dx  = 1$. Hence we have
\begin{align*}
    \Lasupboundl{rn}{\mainset, \widehat{w_{\lambda}}} - f(a) &\leq \int_K f(x) q_r(x)  \widehat{w_{\lambda}} (x) dx -f(a) \\
        &\leq \int_{\mainset} g(x) \Laspolye{r}(x) \widehat{w_{\lambda}} (x) dx - g(a)\\
    &= \sum_{i=1}^n \bigg(\int_{-1}^1 g_i(x)\Laspolye{r}^i (x_i) w_{\lambda}(x_i)dx_i  - g_i(a_i) \bigg) \\
    &= \sum_{i=1}^n \big( (g_i)^{(r)}_{[-1, 1], w_{\lambda}} - g_i(a_i) \big)= \sum_{i=1}^n \ubError{r}{[-1, 1], w_{\lambda}}{g_i}.  
\end{align*}
As a consequence, we need only to consider the case of a quadratic univariate polynomial $\mainfunc$ on $\mainset = [-1, 1]$. 
We  distinguish two cases, depending whether the global minimizer  lies on the boundary or in the interior of $K$.
The case when the global minimizer lies on the boundary of $[-1,1]$ is  settled by Theorem~\ref{THM_Cheby_box}(ii) above, so
we next assume the global minimizer lies in the interior of $[-1,1]$.

\medskip
\noindent
\textbf{Case of a global minimizer  in the interior of $K=[-1,1]$.}
To deal with this case we make use of Proposition~\ref{PROP_differentmeasure} with $K=\widehat K=[-1,1]$, weight function $w(x) := w_{\lambda}(x)$ on $\mainset$, and weight function $\widehat{w}(x) := w_{-1/2}(x)$ on $\biggerset$.
We check that the conditions of the proposition are met. As $\biggerset = \mainset$, clearly we have $\mainset \subseteq_a \biggerset$.
Further, for any $\lambda \geq -\frac{1}{2}$, we have
\begin{equation}
\measureweight{\lambda}(x) = (1-x^2)^{\lambda} \leq (1-x^2)^{-\frac{1}{2}} = \measureweight{-1/2}(x)
\end{equation}	
for all $x \in (-1, 1) = \interior{\mainset}$. As $a \in \interior{\mainset}$, we also have $\measureweight{\lambda} \measureleq{a} \measureweight{-1/2}$ (see \mbox{Remark~\ref{REM_weightdominance}(i)}).
Hence we may apply Proposition~\ref{PROP_differentmeasure} to find that there exists a polynomial upper estimator $g$ of $\mainfunc$ on $[-1, 1]$, exact at $a$, and having
\begin{equation}
	   \ubError{r}{\mainset, \measureweight{}}{g} \leq \frac{2}{m_a}  \ubError{r}{\biggerset, \widehat{\measureweight{}}}{g}
\end{equation}
for all $r \in \N$ large enough.
Now, (the univariate case of) Theorem~\ref{THM_Cheby_box}(i) allows us to claim $\ubError{r}{\biggerset, \widehat{\measureweight{}}}{g}=
\bigO(1/r^2)$, so that  we obtain:
\begin{equation}
	\ubError{r}{\mainset, \measureweight{\lambda}}{\mainfunc} 
	\le \ubError{r}{K,w_\lambda}{g}
	= 
	\bigO_a \big(\ubError{r}{\biggerset, \widehat{\measureweight{}}}{g}\big) =
	\bigO_a({1 / r^2}).
\end{equation}

In summary, in view of the above, 
we have shown  the following extension of Theorem~\ref{THM_Cheby_box}.

\begin{thm}
\label{THM_general_box}
Let $\mainset = [-1, 1]^n$ and  $\lambda \geq -\frac{1}{2}$. Let $a$ be a global minimizer of $\mainfunc$ on $\mainset$. Then we have
\begin{equation}
    \ubError{r}{\mainset, \widehat{\measureweight{\lambda}}}{\mainfunc} = \bigO_a\bigg(\frac{1}{r^2}\bigg).
\end{equation}
\end{thm}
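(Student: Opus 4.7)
The plan is to combine three ingredients already developed in this section: a quadratic separable upper estimator (from Lemma~\ref{LEM_quadraticupperestimator}), the tensor product structure of the weight $\widehat{w_\lambda}$, and the local similarity tool Proposition~\ref{PROP_differentmeasure} to transport the known Chebyshev bound to general $\lambda \geq -\tfrac{1}{2}$.

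First I would invoke Lemma~\ref{LEM_quadraticupperestimator} to obtain the quadratic separable upper estimator $g(x) = f(a) + \langle \nabla f(a), x - a \rangle + \HessConst{K}{f} \|x-a\|^2$, exact at $a$. Writing $g(x) = \sum_{i=1}^n g_i(x_i)$, each $g_i$ is a univariate quadratic with $a_i \in [-1,1]$ as a global minimizer on $[-1,1]$ (this uses that $\nabla f(a)$ lies in the normal cone of $K$ at $a$, hence its sign matches that needed for $a_i$ to minimize $g_i$). By Lemma~\ref{LEM_upperestimator} it suffices to bound $\ubError{r}{K,\widehat{w_\lambda}}{g}$. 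If $q^i_r \in \Sigma_r$ is an optimal univariate density for $g_i$ on $[-1,1]$ against $w_\lambda$, then the product $q_r(x) = \prod_i q^i_r(x_i) \in \Sigma_{rn}$ is a valid density for $\widehat{w_\lambda}$ on $K$, and a direct calculation using the product structure yields
\begin{equation*}
\ubError{rn}{K,\widehat{w_\lambda}}{g} \;\leq\; \sum_{i=1}^n \ubError{r}{[-1,1], w_\lambda}{g_i}.
\end{equation*}
Since $n$ is fixed, a $\bigO(1/r^2)$ bound on each univariate summand will give the sought $\bigO(1/r^2)$ bound after reindexing.

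Second, I would split the univariate analysis according to whether $a_i$ lies on the boundary or in the interior of $[-1,1]$. If $a_i \in \{-1,+1\}$, then Theorem~\ref{THM_Cheby_box}(ii) directly yields $\ubError{r}{[-1,1], w_\lambda}{g_i} = \bigO(1/r^2)$ for every $\lambda > -1$, and nothing further is needed. If $a_i \in (-1,1)$ — the genuinely new situation — I would apply Proposition~\ref{PROP_differentmeasure} with both $K$ and $\widehat K$ equal to $[-1,1]$, weight $w = w_\lambda$ on $K$ and reference weight $\widehat w = w_{-1/2}$ on $\widehat K$. The local similarity $K \subseteq_{a_i} \widehat K$ is trivial since the two sets coincide; the pointwise inequality $(1-x^2)^\lambda \leq (1-x^2)^{-1/2}$ on $(-1,1)$ holds whenever $\lambda \geq -\tfrac{1}{2}$; and the local dominance $w_\lambda \measureleq{a_i} w_{-1/2}$ is automatic from Remark~\ref{REM_weightdominance}(i) because $a_i$ is interior. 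The proposition therefore delivers an upper estimator $h$ of $g_i$, exact at $a_i$, satisfying
\begin{equation*}
\ubError{r}{[-1,1], w_\lambda}{h} \;\leq\; \frac{2}{m_{a_i}}\, \ubError{r}{[-1,1], w_{-1/2}}{h}
\end{equation*}
for all $r$ large enough, and the right-hand side is $\bigO(1/r^2)$ by the univariate case of Theorem~\ref{THM_Cheby_box}(i). Composing with Lemma~\ref{LEM_upperestimator} gives $\ubError{r}{[-1,1], w_\lambda}{g_i} = \bigO_{a_i}(1/r^2)$ as required.

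I do not foresee a serious obstacle: every nontrivial estimate has already been packaged into the preceding lemmas and into Proposition~\ref{PROP_differentmeasure}. The only subtlety worth checking is the bookkeeping of the index (the product construction produces degree $rn$, not $r$, but as $n$ is fixed this is absorbed into the hidden constant of the $\bigO$ notation) and the fact that the constant $m_{a_i}$ produced by Proposition~\ref{PROP_differentmeasure} depends on the location of $a_i$, which justifies tracking the dependence as $\bigO_a$ rather than $\bigO$ in the final statement.
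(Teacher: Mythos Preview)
Your proposal is correct and follows essentially the same route as the paper: reduce to the univariate case via the separable quadratic upper estimator and the tensor structure of $\widehat{w_\lambda}$, then handle the boundary case by Theorem~\ref{THM_Cheby_box}(ii) and the interior case by Proposition~\ref{PROP_differentmeasure} with $K=\widehat K=[-1,1]$, $w=w_\lambda$, $\widehat w=w_{-1/2}$. One small slip: the hypothesis of Proposition~\ref{PROP_differentmeasure} requires $\widehat w \preceq_{a_i} w$, i.e.\ $w_{-1/2}\preceq_{a_i} w_\lambda$, not the reverse as you wrote --- but since $a_i$ is interior this holds automatically by Remark~\ref{REM_weightdominance}(i), so the argument is unaffected.
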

The constant $m_a$ involved in the proof of Theorem~\ref{THM_general_box} depends on the global minimizer $a$ of $\mainfunc$ on $[-1,1]$. It is introduced by the application of Proposition~\ref{PROP_differentmeasure} to cover the case where $a$ lies in the interior of $[-1,1]$. When $\lambda = 0$ (i.e., when $w = w_0 = 1$ corresponds to the Lebesgue measure), one can replace $m_a$ by a uniform constant $m > 0$,  as we now explain.

Consider $\biggerset := [-2, 2] \supseteq [-1, 1] = K$, equipped with the scaled Chebyshev weight $\widehat{\measureweight{}}(x) := \measureweight{-1/2}(x / 2)=(1-x^2/4)^{-1/2}$. Of course, Theorem~\ref{THM_Cheby_box} applies to this choice of $\widehat{\mainset}, \widehat{\measureweight{}}$ as well. 
Further, we still have $\widehat{\measureweight{}}(x) \geq \measureweight{}(x) = \measureweight{0}(x) = 1$ for all $x \in [-1,1]$.
However, we now have a \emph{uniform} upper bound $\widehat{\measureweight{}}(x) \leq \widehat{\measureweight{}}(1)$ for $\widehat{\measureweight{}}$ on $\mainset$, which means that $\widehat{\measureweight{}} \measureleq{} \measureweight{}$ on $\mainset$ (see Remark~\ref{REM_weightdominance}(ii)). Indeed, we have
\begin{equation}
\widehat w(x)/\widehat w(1)\le 1 = w_0(x) = \measureweight{}(x) \quad \text{ for all } x\in [-1,1].
\end{equation}
We may thus apply Corollary~\ref{COR_uniformdifferentmeasure} (instead of Proposition~\ref{PROP_differentmeasure}) to obtain the following.
\begin{cor}
\label{COR_general_box}
If $\mainset = [-1, 1]^n$ is equipped with the Lebesgue measure then
\begin{equation}
    \ubError{r}{\mainset}{\mainfunc} = \bigO\bigg(\frac{1}{r^2}\bigg).
\end{equation}
\end{cor}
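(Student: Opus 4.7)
The plan is to mirror the proof of Theorem~\ref{THM_general_box}, proceeding in two steps: first reduce to the univariate case via a separable quadratic upper estimator, then transport the known $\bigO(1/r^2)$ rate for the Chebyshev measure to the Lebesgue case. The key difference, already sketched in the paragraph preceding the statement, is that we must use the padded interval $[-2,2]$ (rather than $[-1,1]$) as the ambient set $\biggerset$, so that Corollary~\ref{COR_uniformdifferentmeasure}, with its \emph{uniform} constant $m$, applies in place of the $a$-dependent Proposition~\ref{PROP_differentmeasure}.

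\textbf{Reduction to the univariate case.} Let $a \in [-1,1]^n$ be a global minimizer of $\mainfunc$, and let $g(x) = \mainfunc(a) + \langle \nabla \mainfunc(a), x-a\rangle + \HessConst{\mainset}{\mainfunc}\|x-a\|^2 = \sum_{i=1}^n g_i(x_i)$ be the separable quadratic upper estimator of Lemma~\ref{LEM_quadraticupperestimator}. Each $g_i$ is a univariate quadratic with $a_i$ as a global minimizer on $[-1,1]$. Exactly as in the proof of Theorem~\ref{THM_general_box}, forming the tensor product of optimal univariate densities yields
\[
    \ubError{rn}{[-1,1]^n}{\mainfunc} \leq \sum_{i=1}^n \ubError{r}{[-1,1]}{g_i},
\]
so it suffices to prove $\ubError{r}{[-1,1]}{g_i} = \bigO(1/r^2)$ with an implicit constant independent of $a_i$.

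\textbf{Univariate analysis.} I split into two sub-cases. If $a_i \in \{-1, 1\}$, then $g_i$ attains its minimum on the boundary of $[-1,1]$, so Theorem~\ref{THM_Cheby_box}(ii) applied with $\lambda = 0$ immediately gives $\ubError{r}{[-1,1]}{g_i} = \bigO(1/r^2)$, with a constant depending only on the data of $g_i$. If instead $a_i \in (-1,1)$, I would apply Corollary~\ref{COR_uniformdifferentmeasure} with $\mainset = [-1,1]$, $\biggerset = [-2,2]$, $w \equiv 1$, and $\widehat{w}(x) = (1-x^2/4)^{-1/2}$. The four hypotheses are easy to verify: $\mainset \subseteq \biggerset$ is trivial; $\mainset \subseteq_{a_i} \biggerset$ holds because $a_i$ is interior to both sets; $\widehat{w}(x) \geq 1 = w(x)$ on $(-1,1)$ since $x^2/4 \leq 1/4$; and the uniform domination $\widehat{w} \measureleq{} w$ holds with $m = \widehat{w}(1)^{-1} = \sqrt{3}/2$. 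The corollary then produces an upper estimator $\tilde g$ of $g_i$ on $[-2,2]$, exact at $a_i$, with $\ubError{r}{[-1,1]}{\tilde g} \leq (2/m)\,\ubError{r}{[-2,2],\,\widehat{w}}{\tilde g}$ for all $r$ large enough. Applying Theorem~\ref{THM_Cheby_box}(i) to the rescaling of $\widehat{w}$ on $[-2,2]$ (the proof is invariant under nonsingular affine transformations by Lemma~\ref{LEM_lineartransformation}) bounds the right-hand side by $\bigO(1/r^2)$, and Lemma~\ref{LEM_upperestimator} concludes.

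I do not foresee any substantial obstacle beyond a careful bookkeeping of constants. The main conceptual point is \emph{why} one cannot simply reuse the proof of Theorem~\ref{THM_general_box} verbatim with $\biggerset = [-1,1]$: the resulting constant $m_{a_i}$ from Proposition~\ref{PROP_differentmeasure} degenerates as $a_i$ approaches the boundary, yielding only the $a$-dependent rate $\bigO_a(1/r^2)$. Padding to $[-2,2]$ keeps the Chebyshev singularities at positive distance from $[-1,1]$, making the weight ratio uniformly bounded and delivering Corollary~\ref{COR_general_box}'s $a$-uniform $\bigO(1/r^2)$. The boundary sub-case must be dispatched separately precisely because local similarity between $[-1,1]$ and $[-2,2]$ fails at $\pm 1$.
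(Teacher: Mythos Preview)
Your proposal is correct and follows essentially the same approach as the paper: reduce to the univariate case via a separable quadratic estimator, handle the boundary sub-case with Theorem~\ref{THM_Cheby_box}(ii), and for the interior sub-case replace Proposition~\ref{PROP_differentmeasure} by Corollary~\ref{COR_uniformdifferentmeasure} using the padded interval $\biggerset=[-2,2]$ with the scaled Chebyshev weight $\widehat w(x)=(1-x^2/4)^{-1/2}$, so that the uniform constant $m=\widehat w(1)^{-1}=\sqrt{3}/2$ removes the $a$-dependence. Your closing paragraph also correctly identifies why the padding is necessary and why the boundary case must be treated separately.
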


\subsection{The unit ball}
We now consider optimization over the unit ball $K=B^n\subseteq \R^n$ ($n\ge 2$); we restrict to reference measures on $B^n$ with 
weight function  of the form 
\begin{equation}
    \label{EQ_ballweights}
    \measureweight{\lambda}(x)  = (1-||x||^2)^\lambda,
\end{equation}
where $\lambda > -1$.  For further reference we recall (see e.g. \cite[\S 6.3.2]{Xu2017}) or \cite[\S 11]{DaiXu2013}) that
\begin{equation}\label{EQ:Cnlambda}
C_{n,\lambda}:=\int_{B^n}w_{\lambda}(x)dx = {\pi^{n\over 2} \Gamma(\lambda+1)\over \Gamma\big(\lambda+1+{n\over 2}\big)}.
\end{equation} 
For the case $\lambda \ge 0$, we can analyse the bounds and show the following result.

\begin{thm}
\label{THM_general_ball}
Let $\mainset = \unitball$ be the unit ball. Let $a$ be a global minimizer of $\mainfunc$ on $\mainset$. Consider the weight function $\measureweight{\lambda}$ from \eqref{EQ_ballweights} on $\mainset$.
\begin{enumerate}
    \item[(i)] If $\lambda = 0$, we have
    \begin{equation}
    \ubError{r}{\mainset, \measureweight{\lambda}}{\mainfunc} = \bigO\bigg(\frac{1}{r^2}\bigg).
    \end{equation}
    \item[(ii)] If $\lambda > 0$, we have
    \begin{equation}
    \ubError{r}{\mainset, \measureweight{\lambda}}{\mainfunc} = \bigO_a\bigg(\frac{1}{r^2}\bigg).
    \end{equation}
\end{enumerate}
\end{thm}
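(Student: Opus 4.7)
The plan is to reduce Theorem~\ref{THM_general_ball} to already-established $\bigO(1/r^2)$ rates by splitting the analysis according to the location of a global minimizer $a$ of $\mainfunc$ on $\unitball$. After normalising $\mainfunc(a) = 0$ via Remark~\ref{REM_reducetoestimator}, I would treat $a\in\partial\unitball$ and $a\in\interior{\unitball}$ separately: the boundary case will be handled via a linear upper estimator and a reduction to the univariate interval problem, while the interior case will be handled via the local similarity machinery of Section~\ref{SEC_SpecialConvexBodies}.

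For the boundary case $a\in\partial\unitball$, Lemma~\ref{LEM_linear_upperestimator} yields a linear upper estimator $h$ of $\mainfunc$, exact at $a$, which by its proof depends on $x$ only through the inner product $t := \langle a, x\rangle$; write $h(x) = \tilde h(\langle a, x\rangle)$ with $\tilde h$ linear on $[-1,1]$ attaining its minimum at $t = 1$. The key step is to lift any univariate SOS density $p_r\in\Sigma_r$ on $[-1,1]$ to $q_r(x) := p_r(\langle a, x\rangle)$, which is SOS on $\R^n$ of the same degree, and to identify the pushforward of $\measureweight{\lambda}(x)\,dx$ along the map $x\mapsto \langle a, x\rangle$. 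Applying Fubini on the hyperplane slices orthogonal to $a$ and rescaling each slice shows that this pushforward is proportional to $(1-t^2)^{\lambda+(n-1)/2}\,dt$ on $[-1,1]$, so that
\begin{equation}
\ubError{r}{\unitball, \measureweight{\lambda}}{h} \;\le\; \ubError{r}{[-1,1], \measureweight{\lambda+(n-1)/2}}{\tilde h}.
\end{equation}
Since $\lambda + (n-1)/2 \ge 1/2 > -1$ and $\tilde h$ is linear with a boundary minimizer, Theorem~\ref{THM_Cheby_box}(ii) delivers $\bigO(1/r^2)$, and Lemma~\ref{LEM_upperestimator} then transfers the bound to $\mainfunc$. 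The constant is uniform in $a$ because the coefficient of $\tilde h$ is controlled by $\gradConst{\mainfunc}{\unitball}$ and $\HessConst{\mainfunc}{\unitball}$.

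For the interior case $a\in\interior{\unitball}$, I would apply local similarity. In part (i), with $\lambda = 0$, set $\biggerset = [-1,1]^n$ and $\widehat{\measureweight{}} \equiv 1$: then $\unitball\subseteq [-1,1]^n$, local similarity holds trivially at any interior point, and $\widehat{\measureweight{}} \equiv \measureweight{} \equiv 1$, so Corollary~\ref{COR_uniformdifferentmeasure} together with Corollary~\ref{COR_general_box} yields $\bigO(1/r^2)$ with an $a$-uniform constant. In part (ii), with $\lambda > 0$, set $\biggerset = \unitball$ and $\widehat{\measureweight{}} \equiv 1$: then $\measureweight{\lambda}(x) \le 1$ globally while $\measureweight{\lambda}$ is bounded away from zero in a small ball around the interior point $a$, so Proposition~\ref{PROP_differentmeasure} applies with local constant $m_a$ proportional to $(1-\|a\|^2)^\lambda$; applying part (i) to the upper estimator it produces then yields $\bigO_a(1/r^2)$.

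The main technical step is the change-of-variables identification in the boundary case, showing that the pushforward of $\measureweight{\lambda}(x)\,dx$ along $x\mapsto \langle a, x\rangle$ is a Jacobi weight on $[-1,1]$; once this is in place the reduction to Theorem~\ref{THM_Cheby_box}(ii) is immediate, since composition with the linear map $x\mapsto \langle a, x\rangle$ preserves both the SOS property and the degree.
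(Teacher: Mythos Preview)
Your proposal is correct and follows essentially the same route as the paper. The boundary case is identical: the paper also passes to a linear upper estimator via Lemma~\ref{LEM_linear_upperestimator}, reduces to a univariate problem on $[-1,1]$ with the Jacobi weight $w_{\lambda+(n-1)/2}$ via the slicing/change-of-variables argument (stated there as Lemma~\ref{LEM_balltounivariate}, after an orthogonal rotation so that $\langle a,x\rangle$ becomes $x_1$), and then invokes Theorem~\ref{THM_Cheby_box}(ii). The interior case is also the same in spirit; the only cosmetic difference is that for $\lambda>0$ the paper takes $\biggerset=[-1,1]^n$ directly in Proposition~\ref{PROP_differentmeasure}, whereas you take $\biggerset=\unitball$ with $\widehat w\equiv 1$ and then appeal to part~(i)---a two-step reduction that lands in the same place.
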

For the proof, we distinguish the two cases when $a$ lies in the interior of $K$ or on its boundary.

\medskip
\noindent
\textbf{Case of a global minimizer in the interior of $\mainset$.}
Our strategy is to reduce this  to the case of the hypercube with the help of Proposition~\ref{PROP_differentmeasure}. Set $\biggerset := [-1, 1]^n \supseteq \unitball = \mainset$. As $a \in \interior{\mainset}$, we have $\mainset \subseteq_a \biggerset$. 
Consider the weight function $w(x) := w_{\lambda}(x)=(1-\|x\|^2)^{\lambda}$ on $\mainset$, and $\widehat{w}(x) := 1$ on the hypercube $\biggerset$. Since $\lambda \ge 0$, we have $w_{\lambda}(x)\le 1\le \widehat{w}(x)$ for all $x \in K$. Furthermore, as $a \in \interior{\mainset}$, we also have $\widehat{w} \measureleq{a} w$. Hence we may apply Proposition~\ref{PROP_differentmeasure} to find a polynomial upper estimator $g$ of $\mainfunc$ on $\biggerset$, exact at $a$, satisfying
 \begin{equation}
	\ubError{r}{\mainset, \measureweight{}}{g} \leq \frac{2}{m_a} \ubError{r}{\biggerset, \widehat{\measureweight{}}}{g}
\end{equation}
for all $r \in \N$ large enough. Here $m_a > 0$ is the constant from \eqref{EQ_measurelocaldominance}. Now, Theorem~\ref{THM_general_box} allows us to claim $\ubError{r}{\biggerset, \widehat{\measureweight{}}}{g} = \bigO_a(1/r^2)$. Hence we obtain:
\begin{equation}
	\ubError{r}{\mainset, \measureweight{}}{\mainfunc} \le
	\ubError{r}{K,w}{g} 
	=
	\bigO_a(\ubError{r}{\biggerset, \widehat{\measureweight{}}}{g}) =
	\bigO_a({1 / r^2}).
\end{equation}

As in the previous section, it is possible to replace the constant $m_a$ by a uniform constant $m > 0$ in the case that $\lambda = 0$, i.e., in the case that we have the Lebesgue measure on $\mainset$. Indeed, in this case we have $\widehat{w} = w \ (=w_0=1)$, and so in particular $\widehat{w} \measureleq{} w$. 
We may thus invoke Corollary~\ref{COR_uniformdifferentmeasure} (instead of Proposition~\ref{PROP_differentmeasure}) to obtain
\begin{equation}
	\ubError{r}{\mainset, \measureweight{}}{g} \leq 2 \ubError{r}{\biggerset, \widehat{\measureweight{}}}{g}
\end{equation}
and so
\begin{equation}
	\ubError{r}{\mainset, \measureweight{}}{\mainfunc} = 
	\bigO(\ubError{r}{\biggerset, \widehat{\measureweight{}}}{g}) =
	\bigO({1 / r^2}).
\end{equation} 
Note that in this case, we do not actually make use of the fact that $\mainset = \unitball$. Rather, we only need that $a$ lies in the interior of $\mainset$ and that $\mainset \subseteq [-1, 1]^n$. As we may freely apply affine transformations to $\mainset$ (by Lemma~\ref{LEM_lineartransformation}), the latter is no true restriction. We have thus shown the following result.
\begin{thm}
\label{THM_interiorglobalminimizer}
Let $\mainset \subseteq \R^n$ be a compact set, with non-empty interior, equipped with the Lebesgue measure. Assume that $\mainfunc$ has a global minimizer $a$ on $\mainset$ with $a \in \interior{\mainset}$. Then we have
\begin{equation}
    \ubError{r}{\mainset}{\mainfunc} = \bigO\bigg(\frac{1}{r^2}\bigg).
\end{equation}
\end{thm}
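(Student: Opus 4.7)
The plan is to obtain this result as a direct consequence of the machinery developed earlier in the section, in particular Corollary~\ref{COR_uniformdifferentmeasure} combined with the already established bound for the hypercube under the Lebesgue measure (Corollary~\ref{COR_general_box}). The argument essentially repeats the strategy used in the ball case of Theorem~\ref{THM_general_ball}(i), but with $K$ playing the role of an arbitrary compact set rather than $B^n$. The one observation needed to see this works is that the hypercube argument for $\lambda=0$ never used any property of $B^n$ beyond the facts that the global minimizer was in the interior and the set was contained in $[-1,1]^n$.

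First, I would use Lemma~\ref{LEM_lineartransformation} to reduce to the case $K \subseteq [-1,1]^n$. Since $K$ is compact, there is a nonsingular affine transformation $\phi$ with $\phi(K) \subseteq [-1,1]^n$; replacing $K$ by $\phi(K)$ and $f$ by $f \circ \phi^{-1}$ (which still has its global minimizer in the interior) preserves the error $\ubError{r}{}{f}$. Set $\widehat{K} := [-1,1]^n \supseteq K$.

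Next, I would verify the hypotheses of Corollary~\ref{COR_uniformdifferentmeasure} with weight functions $w \equiv 1$ on $K$ and $\widehat{w} \equiv 1$ on $\widehat{K}$ (both Lebesgue). Since $a \in \interior K$, local similarity $K \subseteq_a \widehat{K}$ holds trivially: any $\epsilon>0$ with $\ball{\epsilon}{a} \subseteq K$ satisfies $\ball{\epsilon}{a} \cap K = \ball{\epsilon}{a} = \ball{\epsilon}{a} \cap \widehat{K}$. The weight conditions $\widehat{w} \geq w$ on $\interior K$ and $\widehat{w} \measureleq{} w$ are immediate with $m=1$. Corollary~\ref{COR_uniformdifferentmeasure} then produces an upper estimator $g$ of $f$ on $\widehat{K}$, exact at $a$, with
\begin{equation}
    \ubError{r}{K, w}{g} \leq 2\, \ubError{r}{\widehat{K}, \widehat{w}}{g}
\end{equation}
for all $r$ sufficiently large.

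Finally, I would invoke Corollary~\ref{COR_general_box} applied to the (separable quadratic) upper estimator $g$ on the hypercube $\widehat{K}=[-1,1]^n$ with the Lebesgue measure, yielding $\ubError{r}{\widehat{K}, \widehat{w}}{g} = \bigO(1/r^2)$. Combining this with Lemma~\ref{LEM_upperestimator} gives $\ubError{r}{K}{f} \leq \ubError{r}{K}{g} = \bigO(1/r^2)$ for all large $r$; the finitely many small values of $r$ are absorbed into the implicit constant. I do not expect any real obstacle here: all the work was already done in setting up Corollary~\ref{COR_uniformdifferentmeasure} and the proof of Theorem~\ref{THM_general_ball}(i), and the only novelty is observing that the argument made no use of the ambient set being $B^n$ or $[-1,1]^n$ beyond its embedding into $[-1,1]^n$, which is always available after an affine rescaling.
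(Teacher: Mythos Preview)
Your proposal is correct and follows essentially the same approach as the paper: the paper derives Theorem~\ref{THM_interiorglobalminimizer} precisely by noting that the interior-minimizer argument for the ball with $\lambda=0$ uses only $K \subseteq [-1,1]^n$ (achievable via Lemma~\ref{LEM_lineartransformation}) and $a \in \interior K$, then applies Corollary~\ref{COR_uniformdifferentmeasure} with $w=\widehat{w}\equiv 1$ together with Corollary~\ref{COR_general_box}. Your write-up matches this line of reasoning step for step.
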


\medskip
\noindent
{\bf Case of a global minimizer on the boundary of $\mainset$.} 
Our strategy is now to reduce to the univariate case of the interval $[-1,1]$.
For this, we use Lemma~\ref{LEM_linear_upperestimator}, which claims that $f$ has a linear upper estimator $g$ on $K$, exact at $a$.
Up to applying an orthogonal transformation (and scaling) we may assume that $g$ is of the form $g(x)=x_1$. It therefore suffices now to analyze the behaviour of the bounds for the function $x_1$ minimized on the ball $\unitball$. Note that when minimizing  $x_1$ on $B^n$ or on the interval $[-1,1]$ the minimum is attained at the boundary in both cases.
The following technical lemma will be useful for reducing to the case of  the interval $[-1,1]$.

\begin{lem}
\label{LEM_balltounivariate}
Let $h$ be a univariate polynomial and let $\lambda>-1$. Then we have
\begin{equation}
\int_{\unitball} h(x_1)  \measureweight{\lambda}(x) dx = C_{n-1,\lambda} \int_{-1}^{1} h(x_1) w_{\lambda + \frac{n-1}{2}}(x_1) dx_1,
\end{equation}
where  $C_{n-1,\lambda}$ is given in \eqref{EQ:Cnlambda}.
\end{lem}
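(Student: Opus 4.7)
The plan is to apply Fubini's theorem, slicing the ball $B^n$ by hyperplanes perpendicular to the $x_1$-axis, and then rescale each slice to the unit ball $B^{n-1}$ to factor out a power of $1-x_1^2$.

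Concretely, I first observe that for each fixed $x_1 \in [-1,1]$, the cross-section $\{(x_2,\ldots,x_n) : \sum_{i=2}^n x_i^2 \le 1-x_1^2\}$ is the Euclidean ball of radius $\sqrt{1-x_1^2}$ in $\R^{n-1}$. Writing $y = (x_2,\ldots,x_n)$, Fubini's theorem gives
\begin{equation*}
\int_{\unitball} h(x_1)\, w_\lambda(x)\, dx \;=\; \int_{-1}^{1} h(x_1) \int_{\|y\|^2 \le 1-x_1^2} \bigl(1 - x_1^2 - \|y\|^2\bigr)^{\lambda} dy\, dx_1.
\end{equation*}

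Next I would change variables in the inner integral via $y = \sqrt{1-x_1^2}\, z$ with $z \in B^{n-1}$. The Jacobian contributes $(1-x_1^2)^{(n-1)/2}$, and the integrand transforms as $(1 - x_1^2 - \|y\|^2)^\lambda = (1-x_1^2)^\lambda (1-\|z\|^2)^\lambda$. Collecting the powers of $1-x_1^2$, the inner integral becomes
\begin{equation*}
(1-x_1^2)^{\lambda + \frac{n-1}{2}} \int_{B^{n-1}} (1-\|z\|^2)^\lambda\, dz \;=\; C_{n-1,\lambda}\, (1-x_1^2)^{\lambda + \frac{n-1}{2}} \;=\; C_{n-1,\lambda}\, w_{\lambda + \frac{n-1}{2}}(x_1),
\end{equation*}
using the definition of $C_{n-1,\lambda}$ from \eqref{EQ:Cnlambda}. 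Substituting back into the outer integral yields the claimed identity.

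I do not foresee any real obstacle here: the argument is a routine slicing-and-rescaling computation. The only point requiring care is bookkeeping of the exponents of $1-x_1^2$ contributed respectively by the weight $w_\lambda$ and by the Jacobian of the rescaling, which combine to give exactly the exponent $\lambda + (n-1)/2$ appearing in $w_{\lambda + (n-1)/2}$. The condition $\lambda > -1$ ensures that all integrals involved converge.
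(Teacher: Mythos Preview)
Your proof is correct and follows essentially the same approach as the paper: both use the change of variables $x_j = \sqrt{1-x_1^2}\,u_j$ for $j \ge 2$ to factor the weight as $(1-x_1^2)^{\lambda + (n-1)/2}(1-\|u\|^2)^\lambda$ and recognize the inner integral as $C_{n-1,\lambda}$.
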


\begin{proof}
Change variables and set $u_j= {x_j\over \sqrt{1-x_1^2}}$ for $2\le j\le d$. Then we have
 \begin{equation}w_{\lambda}(x)=(1-x_1^2 -x_2^2+\ldots -x_n^2)^{\lambda}= (1-x_1^2)^{\lambda}(1-u_2^2-\ldots -u_n^2)^{\lambda}\end{equation}
 and
 $dx_2\cdots dx_n= (1-x_1^2)^{n-1\over 2} du_2\cdots du_n.$
 Putting things together we obtain 
 the desired result.
\qedMP\end{proof}

Let $q_r(x_1)$ be an optimal sum-of-squares density with degree at most $2r$ for the problem of minimizing $x_1$ over the interval $[-1,1]$, equipped with the weight function $w(x):=w_{\lambda +{n-1\over 2}}(x)$. Then,  its scaling $C_{n-1,\lambda}^{-1} q_r(x_1)$  provides a  feasible solution for the problem of minimizing $g(x)=x_1$ over the ball $K=B^n$. Indeed, using Lemma~\ref{LEM_balltounivariate},  we have 
$\int_{B^n} C_{n-1,\lambda}^{-1}q_r(x_1)w_{\lambda}(x)dx= \int_{-1}^1q_r(x_1) w(x)dx_1=1$, and so
\begin{equation}
g^{(r)}_{K,w_{\lambda}} \le \int_{B^n} x_1 C_{n-1,\lambda }^{-1} q_r(x_1) w_{\lambda}(x)dx = \int_{-1}^1 x_1 q_r(x_1) w(x_1)dx_1.
\end{equation}
The proof is now concluded by applying Theorem~\ref{THM_Cheby_box}(ii).

\subsection{Ball-like convex bodies}\label{SEC:round}

Here we show a convergence rate of $\ubError{r}{\mainset}{\mainfunc}$ in $\bigO(1/r^2)$ for a special class of smooth convex bodies $\mainset$ with respect to the Lebesgue measure. The basis for this result is a reduction to the case of the unit ball.

We say $\mainset$ has an \emph{inscribed tangent ball} (of radius $\epsilon$) at $x \in \partial K$ if there exists  $\epsilon > 0$ and a closed ball $B_{insc}$ of radius $\epsilon$ such that $x \in \partial B_{insc}$ and $B_{insc} \subseteq \mainset$. Similarly, we say $\mainset$ has a \emph{circumscribed tangent ball} (of radius $\epsilon$) at $x \in \partial K$ if there exists  $\epsilon > 0$ and a closed ball $B_{circ}$ of radius $\epsilon$ such that $x \in \partial B_{circ}$ and $\mainset \subseteq B_{circ}$. 

\begin{defn}\label{DEF:round}
We say that  a (smooth) convex body $\mainset$ is  \emph{ball-like} if there exist (uniform) $\epsilon_{insc}, \epsilon_{circ} > 0$ such that $\mainset$ has inscribed and circumscribed tangent balls of radii $\epsilon_{insc}, \epsilon_{circ}$, respectively, at all points $x \in \partial K$.
\end{defn}

\begin{thm}
\label{THM_squeeze}
Assume that $\mainset$ is a (smooth) ball-like convex body, equipped with the Lebesgue measure. Then we have
\begin{equation}
\ubError{r}{\mainset}{\mainfunc} = \bigO\bigg(\frac{1}{r^2}\bigg).
\end{equation}
\end{thm}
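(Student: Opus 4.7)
The plan is to reduce the analysis to a one-dimensional minimization problem on $[-1,1]$ with a Jacobi-type weight, where the machinery of Proposition~\ref{PROP_differentmeasure} together with Theorem~\ref{THM_Cheby_box}(ii) applies. I will treat separately the easy case where the global minimizer $a$ lies in $\interior K$, for which Theorem~\ref{THM_interiorglobalminimizer} gives $\ubError{r}{K}{f} = \bigO(1/r^2)$ immediately. The work lies in the remaining case $a \in \partial K$.

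Since $K$ is ball-like, it is smooth, so the inward unit normal $\nu_a$ at $a$ is unique and coincides with the inward normal of both the inscribed tangent ball $B_{insc}$ and the circumscribed tangent ball $B_{circ}$ at $a$. In particular $\normalcone{K}{a} = \normalcone{B_{circ}}{a}$, so $\nabla f(a)\in \normalcone{B_{circ}}{a}$ by Lemma~\ref{LEM_first_order_condition}, and Remark~\ref{REM_gradientinnormalcone} applied to $B_{circ}$ yields a linear upper estimator $g$ of $f$ on $B_{circ}$ (hence on $K$), exact at $a$. Lemma~\ref{LEM_upperestimator} then reduces the task to bounding $\ubError{r}{K}{g}$, and using Lemma~\ref{LEM_lineartransformation} we may assume $a=0$, $\nu_a = e_1$, and $g(x) = c\,x_1$ for some $c \ge 0$.

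To bound $\ubError{r}{K}{g}$, I will restrict attention to SOS densities of the form $p(x_1) \in \Sigma_r$ depending only on $x_1$. For any such density, Fubini's theorem gives
\[
\int_K p(x_1)\,dx = \int_0^X p(x_1)\,A_K(x_1)\,dx_1, \qquad \int_K x_1\, p(x_1)\,dx = \int_0^X x_1\, p(x_1)\,A_K(x_1)\,dx_1,
\]
where $X = \max_{x\in K} x_1$ and $A_K(x_1)$ denotes the $(n-1)$-volume of the cross-section $K \cap \{y_1 = x_1\}$. Hence $\ubError{r}{K}{g} \le c\cdot \ubError{r}{[0,X], A_K}{x_1}$, which is a one-dimensional problem. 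The ball-like assumption, by sandwiching cross-sections of $K$ between those of $B_{insc}$ and $B_{circ}$ at $a$, forces $A_K(x_1) \asymp x_1^{(n-1)/2}$ near $x_1 = 0$, and analogously at the opposite endpoint. After the affine rescaling $t = 2x_1/X - 1$, the induced weight $w$ on $[-1,1]$ is therefore comparable to the Jacobi weight $w_\lambda(t) = (1-t^2)^\lambda$ with $\lambda = (n-1)/2$. Applying Proposition~\ref{PROP_differentmeasure} with $K = \widehat K = [-1,1]$, comparing $w$ with a sufficiently large constant multiple of $w_\lambda$ (dominating globally and locally equivalent at the boundary minimizer $t = -1$), and combining with Theorem~\ref{THM_Cheby_box}(ii) applied to the linear target $t$ with $\lambda \ge -\tfrac12$, yields $\ubError{r}{[-1,1], w}{t} = \bigO(1/r^2)$ and thereby $\ubError{r}{K}{f} = \bigO(1/r^2)$.

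The hard part is the weight comparison step: establishing that $A_K$ is comparable to a genuine Jacobi weight \emph{globally} on $[0,X]$, not just near the minimizer. Near the two endpoints the ball-like structure provides the required $\asymp x_1^{(n-1)/2}$ and $\asymp (X-x_1)^{(n-1)/2}$ behavior, whereas on any compact subinterval of $(0,X)$ the concavity of $A_K^{1/(n-1)}$ (a consequence of Brunn--Minkowski and convexity of $K$) keeps $A_K$ bounded above and below by positive constants. These two regimes must be glued together to obtain both the global upper bound $w \le M w_\lambda$ and the local lower bound $w \ge m w_\lambda$ near $t = -1$ needed to invoke Proposition~\ref{PROP_differentmeasure}.
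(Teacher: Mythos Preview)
Your overall strategy is sound and reaches the same conclusion as the paper, but it is organized differently and contains one genuine technical gap.

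\textbf{Comparison with the paper.} Both arguments reduce to the boundary case, produce the linear upper estimator $g(x)=c\,x_1$ via Lemma~\ref{LEM_linear_upperestimator}/Remark~\ref{REM_gradientinnormalcone} on $B_{circ}$, and ultimately appeal to the univariate Jacobi-weight result Theorem~\ref{THM_Cheby_box}(ii). The paper, however, never introduces the cross-section function $A_K$ or invokes Proposition~\ref{PROP_differentmeasure}. Instead it takes $q_r(x_1)$ optimal for $x_1$ on $[0,2]$ with weight $w'(x_1)=(2x_1-x_1^2)^{(n-1)/2}$, and bounds the ratio $\int_K x_1 q_r\,/\int_K q_r$ directly: the numerator is controlled via $K\subseteq B_{circ}$ and Lemma~\ref{LEM_balltounivariate}, while the denominator is controlled via $B_{insc}\subseteq K$, a change of variables to the inscribed ball, and an ad-hoc mass-concentration estimate (a variant of Lemma~\ref{LEM_ballweight}). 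Your route through $A_K$ and Proposition~\ref{PROP_differentmeasure} is more abstract and arguably cleaner in spirit, and your use of Brunn--Minkowski to handle the middle of the interval is a nice touch absent from the paper; the paper's argument is more hands-on but avoids the snag below.

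\textbf{The gap.} Proposition~\ref{PROP_differentmeasure}, as stated and proved, does not apply to a \emph{linear} target. In its proof the upper estimator is $g(x)=\langle\nabla f(a),x-a\rangle+\gamma\|x-a\|^2$ with $\gamma=\gamma_{\widehat K,f}$, and Lemma~\ref{LEM_ballweight} then uses $g(x)\ge\gamma\epsilon^2$ outside $B_\epsilon(a)$; when $f$ is linear one has $\gamma=0$ and the bound $1-E^{(r)}/(\gamma\epsilon^2)$ is vacuous. So you cannot invoke the proposition on the linear function $t$ as written. The fix is immediate: either observe that the proof of Lemma~\ref{LEM_ballweight} goes through with $\gamma\epsilon^2$ replaced by $\inf_{\widehat K\setminus B_\epsilon(a)} g = \epsilon$ when $g(t)=t+1$, or (staying within the paper's stated results) first replace the linear target by the quadratic upper estimator $t+(t+1)^2$, which still has its unique minimizer at $t=-1$, has $\gamma=1>0$, and to which Proposition~\ref{PROP_differentmeasure} and then Theorem~\ref{THM_Cheby_box}(ii) apply verbatim.
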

\begin{proof}
Let $a \in \mainset$ be a global minimizer of $\mainfunc$ on $\mainset$.
We again distinguish  two cases depending on whether $a$ lies in the interior of $\mainset$ or on its boundary.

\medskip
\noindent
\textbf{Case of a global minimizer in the interior of $\mainset$.} This case is covered directly by Theorem~\ref{THM_interiorglobalminimizer}.

\medskip
\noindent
\textbf{Case of a global minimizer on the boundary of $\mainset$.}
By applying a suitable affine transformation, we can arrange that the following holds:  $f(a)=0$, $a = 0$, $e_1$ is an inward normal of $\mainset$ at $a$, and  the radius of the circumscribed tangent ball $B_{circ}$ at $a$ is equal to 1, i.e.,  $B_{circ} = \ball{1}{e_1}$. See Figure~\ref{FIG_squeeze} for an illustration.
Now, as $a$ is a global minimizer of $\mainfunc$ on $\mainset$, we have $\nabla \mainfunc(a) \in \normalcone{\mainset}{a}$ by Lemma~\ref{LEM_first_order_condition}. But $\normalcone{\mainset}{a} = \normalcone{B_{circ}}{a}$, and so $\nabla \mainfunc(a) \in \normalcone{B_{circ}}{a}$. As noted in Remark~\ref{REM_gradientinnormalcone}, we may thus use Lemma~\ref{LEM_linear_upperestimator} to find that $f(x) \leq_a c \langle e_1, x \rangle = cx_1$ on $B_{circ}$ for some constant $c > 0$. In light of \mbox{Remark~\ref{REM_reducetoestimator}(i)}, and after scaling, it therefore suffices to analyse the function $\mainfunc(x) = x_1$.

Again, we will use a reduction to the univariate case, now on the interval $[0, 2]$.
For any $r \in \N$, let $\Laspolye{r} \in \Sigma_r$ be an optimum sum-of-squares density of degree $2r$ for the minimization of $x_1$ on $[0, 2]$ with respect to the weight function 
\[ w'(x_1) := \measureweight{\frac{n-1}{2}}(x_1 - 1) = [1 - (x_1 - 1)^2]^{\frac{n-1}{2}} = [2x_1 - x_1^2]^{\frac{n-1}{2}}.
\]
That is, $q_r \in \Sigma_r$ satisfies
\begin{equation}\label{EQ:x1}
	\int_0^2 x_1 \Laspolye{r} (x_1) w'(x_1) dx_1 = \bigO(1/r^2) \quad \text{ and }\quad \int_0^2 \Laspolye{r} (x_1) w'(x_1) dx_1 = 1,
\end{equation}
where the first equality relies on Theorem~\ref{THM_Cheby_box}(ii). 
As $x \mapsto \Laspolye{r}(x_1) / (\int_{\mainset} \Laspolye{r}(x_1) dx)$ is a sum-of-squares density on $\mainset$ with respect to the Lebesgue measure, we have
\begin{equation}
\label{EQ_squuezefrac}
\ubError{r}{\mainset}{\mainfunc} \leq \frac{\int_{\mainset} x_1 \Laspolye{r}(x_1) dx}{\int_{\mainset} \Laspolye{r}(x_1) dx}. 
\end{equation}
We will now show that, on the one hand, the numerator $\int_{\mainset} x_1 \Laspolye{r}(x_1)  dx$ in \eqref{EQ_squuezefrac} has an upper bound in 
$\bigO(1/r^2)$ and that, on the other hand,  the denominator $\int_{\mainset} \Laspolye{r}(x_1) dx $ in \eqref{EQ_squuezefrac} is lower bounded by an absolute constant that does not depend on $r$. Putting these two bounds together then yields $\ubError{r}{\mainset}{\mainfunc} = \bigO(1/r^2)$, as desired.

\medskip 
\noindent
\textbf{The upper bound.} We make use of the fact that $\mainset \subseteq B_{circ}$ to compute:
\begin{align}
	\int_{\mainset} x_1 \Laspolye{r}(x_1) dx
	&\leq \int_{B_{circ}} x_1  \Laspolye{r}(x_1) dx \\
	&= \int_{\unitball} (y_1+1) \Laspolye{r}(y_1+1)  dy & [y = x - e_1 ] \\ 
	&= C_{n-1, 0} \int_{-1}^1 (y_1+1) \Laspolye{r}(y_1+1) w_{\frac{n-1}{2}}(y_1) dy_1 & [\text{by Lemma } \ref{LEM_balltounivariate}] \\
	&= C_{n-1, 0} \int_{0}^2 z \Laspolye{r}(z)  w'(z) dz  &[z = y_1 + 1 ] \\ 
	&= \bigO(1/r^2). &[\text{by } \eqref{EQ:x1}]
\end{align}

\medskip 
\noindent
\textbf{The lower bound.}
Here, we consider an inscribed tangent ball $B_{insc}$ of $\mainset$ at $a = 0$. Say $B_{insc} = \ball{\rho}{\rho e_1}$ for some $\rho > 0$. See again Figure~\ref{FIG_squeeze}. We may then compute:
\begin{align*}
     \int_{\mainset} \Laspolye{r}(x_1) dx &\geq \int_{B_{insc}} \Laspolye{r}(x_1) dx \\
     &= \int_{\unitball} \Laspolye{r}\big(\rho (y_1 + 1)\big) \rho^n dy 	& [y = \frac{x - \rho e_1}{\rho}]  \\ 
     &= \rho^n C_{n-1, 0} \int_{-1}^1 \Laspolye{r}\big(\rho (y_1 + 1)\big) w_{\frac{n-1}{2}}(y_1)dy_1 & [\text{by Lemma } \ref{LEM_balltounivariate}] \\
     &= \rho^{n-1} C_{n-1, 0} \int_{0}^{2 \rho} \Laspolye{r}(z) w_{\frac{n-1}{2}}(z / \rho - 1)  dz 
     & [z = \rho (y_1 + 1) ]\\ 
     & \ge \rho^{n-1} C_{n-1, 0}\int_0^\rho q_r(z) w'(z) {w_{{n-1\over 2}}(z/\rho -1) \over w_{{n-1\over 2}}(z-1)} dz
     & [w'(z)=w_{{n-1\over 2}}(z-1)]\\   
     & \ge \bigg({\rho\over 2-\rho}\bigg)^{n-1\over 2} C_{n-1,0}  \int_0^\rho q_r(z) w'(z)dz,\\
\end{align*}
where the last inequality follows using the fact that ${1-(z/\rho-1)^2 \over 1-(z-1)^2}\ge {1\over \rho(2-\rho)}$ for $z\in [0,\rho]$.
It remains to show that 
$$ \int_{0}^{\rho} \Laspolye{r}(z) w'(z)dz \geq {1\over 2}\quad \text{ for all } r \text{ large enough.}$$
The argument is similar to the one used for the proof of Lemma~\ref{LEM_ballweight}.
By \eqref{EQ:x1}, there is a constant $C>0$ such that  $\int_0^2 z q_r(z) w'(z)dz \le {C\over r^2}$ for all $r\in \N$. So we have
\[
	{C\over r^2}\ge \int_\rho^2 zq_r(z)w'(z)dz\ge \rho\int_\rho^2q_r(z)w'(z)dz = \rho\bigg(1-\int_0^\rho q_r(z)w'(z)dz\bigg),
\]
which implies 
$\int_0^\rho q_r(z)w'(z)dz \ge 1-{C\over \rho r^2}\ge {1\over 2}$ for $r$ large enough.

This concludes the proof of Theorem~\ref{THM_squeeze}.
\qedMP\end{proof}

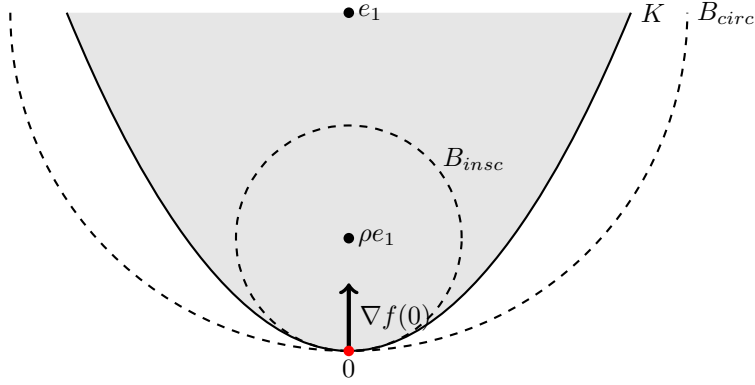
\begin{figure}
    \centering
    \begin{tikzpicture}[x=1.5cm,y=1.5cm]
    \draw[->, ultra thick] (0, 0) -- node[right] {$\nabla f(0)$} (0, 0.6);

    \draw[below] (0,0) node {$0$};
    
    \draw[thick, fill, fill opacity = 0.1] (-2.5,3) parabola bend (0,0) (2.5,3) node[right, opacity=1, black] {$\mainset$};
    
    \draw[thick, dashed] (0,1) circle (1);
    \fill[] (0, 1) circle (2pt);
    \draw[right] (0.75,1.7) node {$B_{insc}$};
    \draw[right] (0,1) node {$\rho e_1$};
    
    \draw[thick, dashed] (0,3) +(-180:3) arc (-180:0:3) node[right, black] {$B_{circ}$};
    \fill[] (0, 3) circle (2pt);
    \draw[right] (0,3) node {$e_1$};

    \fill[red] (0, 0) circle (2pt);

\end{tikzpicture}
    \caption{An overview of the situation in the second case of the proof of Theorem~\ref{THM_squeeze}.}
    \label{FIG_squeeze}
\end{figure}

\medskip
\noindent
\textbf{Classification of ball-like sets.} 
With Theorem~\ref{THM_squeeze} in mind, it is interesting to understand under which conditions a convex body $\mainset$ is ball-like. 
Under the assumption that $\mainset$ has a $C^2$-boundary, the well-known Rolling Ball Theorem (cf., e.g., \cite{Koutroufiotis1972}) guarantees the existence of inscribed tangent balls.

\begin{thm}[Rolling Ball Theorem]
\label{THM_rollingball}
Let $\mainset \subseteq \R^n$ be a convex body with $C^2$- boundary. Then there exists  $\epsilon_{insc} > 0$ such that $\mainset$ has an inscribed tangent ball of radius $\epsilon_{insc}$ for each $x \in \partial \mainset$.
\end{thm}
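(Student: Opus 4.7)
The plan is to use the $C^2$-regularity and compactness of $\partial K$ to obtain a uniform upper bound on the principal curvatures of $\partial K$, and then translate this into a uniform inscribed tangent ball radius via a critical-radius argument.

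Since $K = \{\Psi \leq 0\}$ for a convex $C^2$ function $\Psi$ with $\partial K = \{\Psi = 0\}$ and $\nabla \Psi \neq 0$ on $\partial K$ (which may be arranged), the unit inward normal $n(x) := -\nabla \Psi(x)/\|\nabla \Psi(x)\|$ is well-defined and $C^1$ on $\partial K$. At each $x \in \partial K$, the principal curvatures $\kappa_1(x), \ldots, \kappa_{n-1}(x) \geq 0$ (non-negative by convexity of $K$) depend continuously on $x$, so compactness of $\partial K$ yields a finite uniform bound $\kappa^* := \sup_{x \in \partial K, i} \kappa_i(x) < \infty$. I claim any $\epsilon_{insc} \in (0, 1/\kappa^*)$ does the job.

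To verify this for a fixed $x_0 \in \partial K$, I would work in local coordinates with $x_0 = 0$ and $n(x_0) = e_n$, so that near $0$ the boundary $\partial K$ is the graph $\{y_n = h(y')\}$ of a convex $C^2$ function $h$ with $h(0) = 0$, $\nabla h(0) = 0$, and $\nabla^2 h(0)$ having eigenvalues $\kappa_i(x_0) \leq \kappa^*$. Taylor's theorem then gives $h(y') \leq (\kappa^*/2) \|y'\|^2 + o(\|y'\|^2)$, while the lower hemisphere of $\ball{\epsilon_{insc}}{\epsilon_{insc} e_n}$ satisfies $y_n = \|y'\|^2/(2\epsilon_{insc}) + O(\|y'\|^4)$, where $1/(2\epsilon_{insc}) > \kappa^*/2$. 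Hence the ball lies strictly above the graph of $h$ (i.e., inside $K$) in a neighborhood of $x_0$.

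The main obstacle is promoting this local containment to global containment, which I would handle via a critical-radius argument. Define $t^*(x_0) := \sup\{t > 0 : \ball{t}{x_0 + tn(x_0)} \subseteq K\}$ and suppose for contradiction that $t^*(x_0) < 1/\kappa^*$. Then the extremal ball $B^* := \ball{t^*(x_0)}{x_0 + t^*(x_0) n(x_0)}$ must touch $\partial K$ at some $y \neq x_0$; $C^2$-smoothness together with $B^* \subseteq K$ force the outward normals of $\partial B^*$ and $\partial K$ at $y$ to coincide, so $B^*$ is simultaneously the tangent ball at $y$ of radius $t^*(x_0)$. Applying the local analysis at $y$ shows that $B^*$ lies strictly inside $K$ near $y$, making the contact at $y$ isolated. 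A perturbation argument using the continuity of $n$ and the uniform curvature bound then contradicts the definition of $t^*(x_0)$ by producing a slightly larger tangent ball at $x_0$ still contained in $K$. The subtle point here is ruling out the emergence of new, distant contact points as $t$ grows, which is precisely what the uniform (rather than pointwise) bound $\kappa^*$ enables.
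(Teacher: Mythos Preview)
The paper does not actually prove this theorem; it only states it and cites Koutroufiotis for a proof. So there is no ``paper's own proof'' to compare against. That said, your sketch deserves a direct assessment.

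Your local step is fine: the Taylor comparison between the graph of $\partial K$ and the sphere of curvature $1/\epsilon_{insc}>\kappa^*$ does show the tangent ball lies inside $K$ in a neighborhood of the base point.

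The global step, however, has a genuine gap. Suppose $t^*:=t^*(x_0)<1/\kappa^*$ and $B^*=B_{t^*}(c^*)$ touches $\partial K$ at a second point $y\neq x_0$. Since $B^*\subseteq K$ is tangent at $y$, the inward normals agree there, so $c^*=x_0+t^*n(x_0)=y+t^*n(y)$; in particular $n(x_0)\neq n(y)$ and hence $\langle n(x_0),n(y)\rangle<1$. Now enlarge the tangent ball at $x_0$: the new center is $c^*+\delta\,n(x_0)$ and the new radius is $t^*+\delta$. But
\[
\|c^*+\delta\,n(x_0)-y\|=\|t^*n(y)+\delta\,n(x_0)\|=t^*+\delta\,\langle n(x_0),n(y)\rangle+O(\delta^2)<t^*+\delta,
\]
so $y\in\partial K$ lies \emph{strictly inside} the enlarged ball, which therefore is not contained in $K$. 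Thus the ``slightly larger tangent ball at $x_0$ still contained in $K$'' that you invoke simply does not exist, and no contradiction arises from this perturbation. The second-order strict separation you established near $y$ is overwhelmed by the first-order effect that the center moves toward $y$ more slowly than the radius grows.

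A workable route is to argue instead that the normal map $\Phi:\partial K\times[0,1/\kappa^*)\to K$, $\Phi(x,s)=x+s\,n(x)$, is injective. Equivalently, show that for $p\in\mathrm{int}\,K$ with $d(p,\partial K)<1/\kappa^*$ the nearest boundary point is unique; one way is to observe that every critical point $z$ of $x\mapsto\|x-p\|^2$ on $\partial K$ satisfies $p=z+t_z n(z)$ for some $t_z>0$, and the Hessian of this function on $\partial K$ at $z$ has eigenvalues $2(1-t_z\kappa_i(z))$, which are all positive when $t_z<1/\kappa^*$. If there were two nearest points $x_0,y$ at distance $t^*<1/\kappa^*$, both would be nondegenerate local minima, and a mountain-pass argument would force a non-minimum critical point $z$ at some level $t_z\in[t^*,1/\kappa^*)$, contradicting the Hessian computation. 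This closes the gap that your perturbation cannot.
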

Classifying the existence of circumscribed tangent balls is somewhat more involved. Certainly, we should assume that $\mainset$ is \emph{strictly} convex, which means that  its boundary should not contain any line segments. 
This assumption, however, is not sufficient.
 Instead we need the following  stronger notion of {\em 2-strict convexity}  introduced in \cite{DallaHatziafratis2006}.

\begin{defn}
Let $\mainset \subseteq \R^n$ be a convex body with $C^2$-boundary and let $\Psi \in C^2(\R^{n}, \R)$ such that $\mainset = \Psi^{-1}((-\infty, 0])$ and $\partial \mainset = \Psi^{-1}(0)$. Assume $\nabla \Psi(a)\ne 0$ for all $a\in \partial K$.
The set $K$ is said to be {\em $2$-strictly convex} if the following holds:
\[
	x^T\nabla ^2 \Psi(a) x >0 \quad \text{ for all } x\in T_a K \setminus\{0\} \text{ and } a\in \partial K.
\]
In other words, the Hessian of $\Psi$ at any boundary point  should be positive definite, when restricted to the tangent space.
\end{defn}

\begin{exmp}
Consider the unit ball for the $\ell_4$-norm:
\begin{equation} 
  K=  \{ (x_1, x_2) : \Psi(x_1,x_2):=x_1^4 + x_2^4 \leq 1\} \subseteq \R^2.
\end{equation}
Then, $K$ is strictly convex, but 
 not 2-strictly convex.  Indeed, at any of the  points $a= (0, \pm 1)$ and $(\pm 1,0)$, the Hessian of $\Psi$ is not positive definite on the tangent space. For instance, for $a=(0,-1)$, we have $\nabla\Psi(a)= (0,-4)$ and 
$x^T\Psi^2(a)x=12x_2^2$, which vanishes at $x=(1,0)\in T_aK$.
In fact, one can verify that $K$ does not have a circumscribed tangent ball at any of the  points $ (0, \pm 1)$, $(\pm 1,0)$.
\end{exmp}

It is shown in \cite{DallaHatziafratis2006} that the set of 2-strictly convex bodies lies dense in the set of all convex bodies.
For $\mainset$ with $C^2$-boundary, it turns out that $2$-strict convexity is equivalent to the existence of circumscribed tangent balls at all boundary points.

\begin{thm}[\hspace{1sp}{\cite[Corollary 3.3]{DallaSamiou2007}}]
\label{THM_outerball}
Let $\mainset$ be a convex body with $C^2$-boundary. Then $\mainset$ is $2$-strictly convex if and only if there exists $\epsilon_{circ} > 0$ such that $\mainset$ has a circumscribed tangent ball of radius $\epsilon_{circ}$ at all boundary points $a \in \partial \mainset$.
\end{thm}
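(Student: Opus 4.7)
The plan is to translate both conditions into statements about the principal curvatures of $\partial K$ and then use compactness of $\partial K$ to pass between pointwise positivity and uniform lower bounds.

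For the setup, recall that if $K = \Psi^{-1}((-\infty, 0])$ with $\nabla\Psi(a)\neq 0$ for all $a\in\partial K$, then the second fundamental form of $\partial K$ at $a$ is (up to sign and normalization) the restriction of the Hessian to the tangent space: for $x,y\in T_aK$,
\[
\mathrm{II}_a(x,y) \;=\; \frac{\langle \nabla^2\Psi(a)\,x,\,y\rangle}{\|\nabla\Psi(a)\|}.
\]
Thus $K$ is $2$-strictly convex if and only if the principal curvatures $\kappa_1(a),\ldots,\kappa_{n-1}(a)$ of $\partial K$ are strictly positive at every $a\in\partial K$.

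The key lemma I would prove first is a local-to-global comparison: $K$ admits a circumscribed tangent ball of radius $\epsilon$ at $a\in\partial K$ if and only if $\kappa_i(a)\geq 1/\epsilon$ for every $i$. The nontrivial direction is that pointwise curvature bounds force a \emph{global} tangent ball $B_{circ}\supseteq K$. For this, I would fix $a\in\partial K$, take $B=\ball{1/\kappa^\star}{a+(1/\kappa^\star)\normal{a}}$ with $\kappa^\star := \min_i \kappa_i(a)$, and show $K\subseteq B$. Parametrize $\partial K$ locally as a graph $x_n = \tfrac{1}{2}x^T H_a x + o(\|x\|^2)$ over the tangent hyperplane $T_aK$, where $H_a$ has eigenvalues $\kappa_i(a)$; similarly $\partial B$ is the graph of $x_n = \tfrac{\kappa^\star}{2}\|x\|^2 + o(\|x\|^2)$. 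Hence $\partial K$ lies on the interior side of $\partial B$ in a neighbourhood of $a$. Globally, the function $h(x) := $ (signed distance from $x$ to the centre of $B$) is continuous on the compact boundary $\partial K$, so if $K\not\subseteq B$, there exists a boundary point $a^\star$ that maximizes $h$ on $\partial K$ with $h(a^\star) > 1/\kappa^\star$. At $a^\star$, the outer normal of $K$ must coincide with that of $B$ centred on the same axis, and a Taylor argument contradicts the curvature bound. The converse direction (tangent ball forces $\kappa_i(a)\geq 1/\epsilon$) is the easier local expansion.

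Given this lemma, the theorem follows by compactness. For ($\Rightarrow$), assume $K$ is $2$-strictly convex. The map $a\mapsto \min\{\langle \nabla^2\Psi(a)u,u\rangle : u\in T_aK,\ \|u\|=1\}$ is continuous and strictly positive on the compact set $\partial K$; together with the continuous nonvanishing $\|\nabla\Psi(a)\|$, this yields a uniform lower bound $\kappa^\star_0 > 0$ on all principal curvatures. The lemma then gives circumscribed tangent balls of the common radius $\epsilon_{circ} := 1/\kappa^\star_0$ at every boundary point. For ($\Leftarrow$), uniform circumscribed tangent balls of radius $\epsilon_{circ}$ give $\kappa_i(a)\geq 1/\epsilon_{circ} > 0$ at every $a$, whence $\nabla^2\Psi(a)|_{T_aK}$ is positive definite everywhere.

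The main obstacle is the global step in the key lemma: going from pointwise curvature positivity to the existence of a single ball $B_{circ}$ that contains all of $K$. The local Taylor comparison is immediate, but ensuring that no far-away portion of $K$ escapes $B_{circ}$ requires a genuinely global argument, for which one either uses the boundary maximum/normal coincidence argument sketched above, or equivalently the fact that a $C^2$ convex hypersurface with principal curvatures uniformly bounded below by $\kappa^\star$ admits a global osculating ball of radius $1/\kappa^\star$ (a standard consequence of the convexity of $K$ and the tubular neighbourhood theorem).
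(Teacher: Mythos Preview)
The paper does not prove this theorem: it is quoted as \cite[Corollary 3.3]{DallaSamiou2007} and used as a black box, so there is no proof in the paper to compare your proposal against.

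As to the proposal itself, the overall strategy (identify $2$-strict convexity with strict positivity of all principal curvatures, then trade uniform curvature bounds for uniform circumscribed tangent balls via compactness of $\partial K$) is the standard one and is sound. However, your ``key lemma'' is misstated. You write that $K$ admits a circumscribed tangent ball of radius $\epsilon$ at $a$ \emph{if and only if} $\kappa_i(a)\geq 1/\epsilon$ for every $i$, as a purely pointwise equivalence. The direction ``$\kappa_i(a)\geq 1/\epsilon$ at the single point $a$ $\Rightarrow$ circumscribed ball of radius $\epsilon$ at $a$'' is false: take a long thin ellipse and let $a$ be one of the two vertices of maximal curvature; then $1/\kappa(a)$ is tiny, far smaller than the diameter of the ellipse, so no ball of that radius can contain $K$. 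Your global maximum argument reflects this gap: at the extremal point $a^\star$ you need a curvature bound at $a^\star$, not at $a$, to derive a contradiction---and the lemma as stated gives you no information at $a^\star$.

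What is true, and what you actually use in the application, is the uniform version: if $\kappa_i(b)\geq 1/\epsilon$ for all $i$ and \emph{all} $b\in\partial K$, then $K$ has a circumscribed tangent ball of radius $\epsilon$ at every boundary point (this is the Blaschke-type rolling statement). With the lemma reformulated this way, your compactness argument for $(\Rightarrow)$ goes through, and $(\Leftarrow)$ is the easy local Taylor comparison you already have.
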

Combining \mbox{Theorems \ref{THM_rollingball} and \ref{THM_outerball}} then gives a full classification of the ball-like convex bodies $\mainset$ with $C^2$-boundary.

\begin{cor}
\label{COR_round_2strictlyconvex}
Let $\mainset \subseteq \R^n$ be a convex body with $C^2$-boundary. Then $\mainset$ is ball-like if and only if it is $2$-strictly convex.
\end{cor}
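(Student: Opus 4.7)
The plan is to observe that this corollary is essentially a direct combination of the two theorems immediately preceding it, namely the Rolling Ball Theorem (Theorem~\ref{THM_rollingball}) and the characterization from \cite{DallaSamiou2007} (Theorem~\ref{THM_outerball}). The two implications can be handled separately.

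For the forward implication ($K$ ball-like $\Rightarrow$ $K$ is $2$-strictly convex), I would simply note that a ball-like convex body $K$ has, by definition, a circumscribed tangent ball of uniform radius $\epsilon_{circ} > 0$ at every boundary point. Theorem~\ref{THM_outerball} then directly implies that $K$ is $2$-strictly convex. Note that the inscribed tangent ball condition plays no role here.

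For the reverse implication ($K$ is $2$-strictly convex $\Rightarrow$ $K$ ball-like), I would apply the two preceding theorems in parallel. The Rolling Ball Theorem (Theorem~\ref{THM_rollingball}) gives, using only the assumption that $K$ has $C^2$-boundary, the existence of a uniform $\epsilon_{insc} > 0$ such that $K$ admits an inscribed tangent ball of radius $\epsilon_{insc}$ at every boundary point. On the other hand, since $K$ is assumed $2$-strictly convex, Theorem~\ref{THM_outerball} supplies a uniform $\epsilon_{circ} > 0$ for which $K$ has a circumscribed tangent ball of radius $\epsilon_{circ}$ at every boundary point. Together, these two facts match Definition~\ref{DEF:round} of a ball-like convex body.

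Since both directions reduce to citations of Theorems~\ref{THM_rollingball} and \ref{THM_outerball}, there is no real obstacle: the corollary is best presented as a short bookkeeping argument, packaging the two theorems into a single statement about the geometric notion of ball-likeness. The only thing one might want to verify carefully is that the $C^2$-boundary hypothesis in the corollary is exactly what is needed to invoke both theorems, and that the uniform (as opposed to merely pointwise) character of the radii in Definition~\ref{DEF:round} is already built into both Theorem~\ref{THM_rollingball} and Theorem~\ref{THM_outerball} as stated.
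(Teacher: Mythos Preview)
Your proposal is correct and matches the paper's approach exactly: the paper does not write out a proof but simply states that the corollary follows by combining Theorems~\ref{THM_rollingball} and~\ref{THM_outerball}, which is precisely the bookkeeping argument you describe.
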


\noindent
\textbf{A convex body without inscribed tangent balls.} We now give an example of a convex body $\mainset$ which does not have inscribed tangent balls, going back to de Rham \cite{deRham1947}. 
The idea is to construct a curve by starting with a polygon, and then successively `cutting corners'. Let $C_0$ be the polygon in $\R^2$ with vertices $(-1, -1), (1, -1), (1, 1)$ and $(-1, 1)$, i.e., a square. 
For $k \geq 1$, we obtain $C_k$ by subdividing each edge of $C_{k-1}$ into three equal parts and taking the convex hull of the resulting subdivision points (see Figure~\ref{FIG_deRhamCurve}). We then let $C$ be the limiting curve obtained by letting $k$ tend to $\infty$.
Then, $C$ is a continuously differentiable, convex curve (see \cite{deBoor1986} for details). It is not, however, $C^2$ everywhere.
We indicate below some point where no inscribed tangent ball exists for the convex body with boundary $C$.

Consider the point $m = (0,-1) \in C$, which is an element of $C_k$ for all $k$. Fix $k\ge 1$.
If we walk anti-clockwise along $C_k$ starting at $m$, the first corner point encountered is $s_k = (1/3^k, -1)$, the slope of the edge starting at $s_k$ is $l_k=1/k$ and its end point is 
\begin{equation}
e_k = \big((2k + 1) / 3^k, 2 / 3^k - 1\big). 
\end{equation}
Now suppose that there exists an inscribed tangent ball $B_\epsilon (c)$ at the point $m$. Then, $\epsilon>0$, $c=(0,\epsilon -1)$ and any point $(x,y)\in C$ lies outside of the ball $B_\epsilon(c)$, so that 
$$x^2+(y+1)^2-2\epsilon(y+1)\ge 0 \quad \text{ for all } (x,y)\in C.$$
As $C$ is contained in the polygonal region delimited by any $C_k$, also $e_k\not\in B_\epsilon(c)$ and thus 
$ \big({2k+1\over 3^k}\big) ^2 + \big({2\over 3^k}\big)^2 -{4\epsilon \over 3^k}\ge 0$. Letting $k\to\infty$, we get $\epsilon=0$, a contradiction.

\begin{figure}[]
    \centering
    \scalebox{1}{ \begin{tikzpicture}[x=1.3cm, y=1.3cm]
\draw[thick] (-1.0,-1.0) -- (1.0,-1.0) -- (1.0,1.0) -- (-1.0,1.0) -- cycle;
\end{tikzpicture} }
    \scalebox{1}{ \begin{tikzpicture}[x=1.3cm, y=1.3cm]
\draw[thick] 
(-0.333,-1.0) -- (0.333,-1.0) -- (1.0,-0.333) -- (1.0,0.333) -- (0.333,1.0) -- (-0.333,1.0) -- (-1.0,0.333) -- (-1.0,-0.333) -- cycle;

%

\end{tikzpicture} }
    \scalebox{1}{ \begin{tikzpicture}[x=1.3cm, y=1.3cm]
\draw[thick] (-0.111,-1.0) -- (0.111,-1.0) -- (0.556,-0.778) -- (0.778,-0.556) -- (1.0,-0.111) -- (1.0,0.111) -- (0.778,0.556) -- (0.556,0.778) -- (0.111,1.0) -- (-0.111,1.0) -- (-0.556,0.778) -- (-0.778,0.556) -- (-1.0,0.111) -- (-1.0,-0.111) -- (-0.778,-0.556) -- (-0.556,-0.778) -- cycle;
\end{tikzpicture} }
    \scalebox{1}{ \input{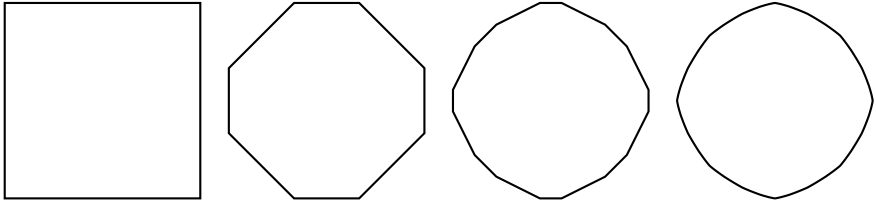} }
    \caption{From left to right: the curve $C_k$ for $k = 0,1,2,8$.}
    \label{FIG_deRhamCurve}
\end{figure}

\subsection{The simplex}
We now consider a full-dimensional simplex $\simplex{n} := \conv(\{v_0, v_1, v_2, \dots, v_n\}) \subseteq \R^{n}$, equipped with the Lebesgue measure. We show the following.
\begin{thm}
\label{THM_simplex}
Let $\mainset = \simplex{n}$ be a simplex, equipped with the Lebesgue measure. Then 
\begin{equation}
\ubError{r}{\simplex{n}}{\mainfunc} = \bigO\bigg(\frac{1}{r^2}\bigg).
\end{equation}
\end{thm}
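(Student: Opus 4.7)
The plan is to split into two cases depending on whether the global minimizer $a$ of $\mainfunc$ on $\simplex{n}$ lies in the interior or on the boundary. If $a \in \interior{\simplex{n}}$, then Theorem~\ref{THM_interiorglobalminimizer} applies directly and yields $\ubError{r}{\simplex{n}}{\mainfunc} = \bigO(1/r^2)$. The substantive case is $a \in \partial \simplex{n}$, and here I would reduce to the Lebesgue hypercube case using the local similarity tool Corollary~\ref{COR_uniformdifferentmeasure} combined with Corollary~\ref{COR_general_box}.

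For this reduction, suppose that $a$ lies in the relative interior of a $k$-dimensional face of $\simplex{n}$ (with $0 \le k \le n-1$), so exactly $n-k$ of the defining inequalities of $\simplex{n}$ are active at $a$. A short check shows their inward normals are linearly independent: for the standard simplex they are a subset of $\{e_1,\ldots,e_n\}$ possibly together with $-(1,\ldots,1)$, and any such collection of size at most $n$ is independent. By Lemma~\ref{LEM_lineartransformation}, the error $\ubError{r}{\simplex{n}}{\mainfunc}$ is preserved by any nonsingular affine transformation, so I may assume after a suitable change of coordinates that $a = 0$ and that the active inequalities are precisely $x_{k+1} \geq 0,\ \ldots,\ x_n \geq 0$. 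In this normalized position the simplex locally coincides near the origin with the orthant $\{x : x_j \geq 0 \text{ for } j > k\}$. I then choose $M > 0$ large enough that $\simplex{n} \subseteq \widehat{K} := [-M, M]^k \times [0, M]^{n-k}$. Since near the origin only the constraints $x_j \geq 0$ (for $j > k$) are active on either $\simplex{n}$ or $\widehat{K}$, while all remaining constraints are strict at $0$, we obtain $B_\epsilon(0) \cap \simplex{n} = B_\epsilon(0) \cap \widehat{K}$ for all sufficiently small $\epsilon > 0$, i.e., $\simplex{n} \subseteq_0 \widehat{K}$.

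Both sets are equipped with the Lebesgue measure, so the weight-function domination hypotheses $\widehat{w} \geq w$ and $\widehat{w} \measureleq{} w$ of Corollary~\ref{COR_uniformdifferentmeasure} hold trivially with $m = 1$. The corollary thus produces an upper estimator $g$ of $\mainfunc$ on $\widehat{K}$, exact at the origin, satisfying $\ubError{r}{\simplex{n}}{g} \leq 2\, \ubError{r}{\widehat{K}}{g}$ for all $r$ large enough. Since $\widehat{K}$ is affinely equivalent to $[-1,1]^n$, Corollary~\ref{COR_general_box} combined once more with Lemma~\ref{LEM_lineartransformation} yields $\ubError{r}{\widehat{K}}{g} = \bigO(1/r^2)$, and Lemma~\ref{LEM_upperestimator} then gives $\ubError{r}{\simplex{n}}{\mainfunc} \leq \ubError{r}{\simplex{n}}{g} = \bigO(1/r^2)$, as desired. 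The only real obstacle is the affine normalization step (proving that the active outward normals at $a$ form an independent set and can therefore be sent onto coordinate directions); once this is in place, the local similarity inclusion $\simplex{n} \subseteq_0 \widehat{K}$ and the subsequent appeal to the hypercube result are routine.
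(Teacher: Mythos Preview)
Your proof is correct and follows the same core strategy as the paper: reduce to the hypercube via the local-similarity tool (Corollary~\ref{COR_uniformdifferentmeasure}) and then invoke the Lebesgue-hypercube rate (Corollary~\ref{COR_general_box}). The paper's execution is slightly slicker, however: rather than splitting into interior/boundary cases and analysing which face contains $a$, it picks any facet $F$ not containing $a$, applies the single affine map $\phi$ determined by $\phi(v_0)=0$, $\phi(v_i)=e_i$ (where $F=\conv(v_1,\dots,v_n)$), and observes that $\phi(\simplex{n})=\{x\ge 0,\ \sum_i x_i\le 1\}$ is locally similar to $[0,1]^n$ at every point with $\sum_i x_i<1$. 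This avoids both the interior/boundary dichotomy and the linear-independence argument for the active normals; your route works, but the paper's shows that no face-by-face analysis is needed.
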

\begin{proof}
Let $a \in \simplex{n}$ be a global minimizer of $\mainfunc$ on $\simplex{n}$.
The idea is to apply an affine transformation $\phi$ to $\simplex{n}$ whose image $\phi(\simplex{n})$ is locally similar to $[0,1]^n$ at the global minimizer $\phi(a)$ of $g := \mainfunc \circ \phi^{-1}$, after which we may `transport' the $\bigO(1/r^2)$ rate from the hypercube to the simplex.
 
Let $F := \conv(v_1, v_2, \dots, v_n)$ be the facet of $\simplex{n}$ which does not contain $v_0$. By reindexing, we may assume w.l.o.g. that $a \not \in F$.
Consider the map $\phi$ determined by $\phi(v_0) = 0$ and $\phi(v_i) = e_i$ for all $i \in [n]$, where $e_i$ is the $i$-th standard basis vector of $\R^n$. See Figure~\ref{FIG_simplex}. Clearly, $\phi$ is nonsingular, and $\phi(\simplex{n}) \subseteq [0,1]^n$.

\begin{lem}
We have $\phi(\simplex{n}) \subseteq_{\phi(x)} [0, 1]^n$ for all $x \in \simplex{n} \setminus F$.
\end{lem}
\begin{proof}
By definition of $F$, we have
\begin{equation}
\simplex{n} \setminus F = \bigg\{\sum_{i=0}^n \lambda_i v_i : \sum_{i=1}^n \lambda_i < 1, \lambda \geq 0 \bigg\},
\end{equation}
and so
\begin{equation}
\phi(\simplex{n} \setminus F) = \big\{y \in [0,1]^n : \sum_{i=1}^n y_i < 1\big\},
\end{equation}
which is an open subset of $[0, 1]^n$.
But this means that for each $y = \phi(x) \in \phi(\simplex{n} \setminus F)$ there exists  $\epsilon > 0$ such that
\begin{equation}
 \ball{\epsilon}{y} \cap [0,1]^n \subseteq \ball{\epsilon}{y} \cap \phi(\simplex{n} \setminus F),
\end{equation}
which concludes the proof of the lemma. \qedMP\end{proof}
The above lemma tells us in particular that $\phi(\simplex{n}) \subseteq_{\phi(a)} [0,1]^n$. We now apply Corollary~\ref{COR_uniformdifferentmeasure} with $\mainset = \phi(\simplex{n})$, $\biggerset = [0,1]^n$ and weight functions $w = \widehat{w} = 1$ on $\mainset, \biggerset$, respectively. This yields a polynomial upper estimator $h$ of $g$ on $[0,1]^n$ having 
\begin{equation}
\ubError{r}{\phi(\simplex{n})}{g} \leq 2 \ubError{r}{[0,1]^n}{h} = \bigO(1/r^2),
\end{equation}
for $r \in \N$ large enough, using Theorem~\ref{THM_general_box} for the right most equality.
It remains to apply Lemma~\ref{LEM_lineartransformation} to obtain:
\begin{equation}
    \ubError{r}{\simplex{n}}{\mainfunc} = \ubError{r}{\phi(\simplex{n})}{g} = \bigO(1/r^2),
\end{equation}
which concludes the proof of Theorem~\ref{THM_simplex}.
\qedMP\end{proof}

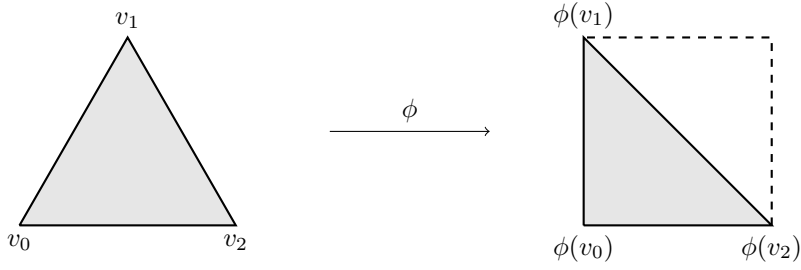
\begin{figure}
    \centering
    \begin{tikzpicture}[x=2.5cm,y=2.5cm]
    \draw[dashed, thick] (1, 0) -- (1, 1) -- (0,1) ;
    
    \draw[thick] (0,0) -- (1, 0)-- (0,1) -- (0,0);
    \draw[opacity=0.1, fill] (0,0) -- (1, 0) -- (0,1) -- (0,0);
    
    \draw[->] (-1.35 , 0.5) -- node[above] {$\phi$} (-0.5 , 0.5); 
    
    \draw (0,0) node[below] {$\phi(v_0)$};
    \draw (0,1) node[above] {$\phi(v_1)$} ;
    \draw (1,0) node[below] {$\phi(v_2)$} ;
    

    \begin{scope}[shift={(-3,0)}]
        \draw[thick] (0,0) -- (0.575, 1) -- (1.15,0) -- (0,0);
        \draw[opacity=0.1, fill] (0,0) -- (0.575, 1) -- (1.15,0) -- (0,0);

        \draw (0,0) node[below] {$v_0$};
        \draw (0.575, 1) node[above] {$v_1$} ;
        \draw (1.15,0) node[below] {$v_2$} ;

    \end{scope}



\end{tikzpicture}
    \caption{The map $\phi$ from the proof of Theorem~\ref{THM_simplex} for $n=2$}
    \label{FIG_simplex}
\end{figure}

\section{General sets}
    \label{SEC_GeneralSets}
    In this section we analyze the error $\ubError{r}{}{\mainfunc}$ for a general compact set $K$ equipped with the Lebesgue measure.
We will show  the following two results:
when $K$ satisfies a mild assumption (Assumption~\ref{ASSU_one}) we prove a convergence rate in $\bigO(\log r/r)$ (Theorem~\ref{THM_bound_interior_cone}),  which improves on the previous rate in $\bigO(1/\sqrt r)$ from \cite{deKlerk2017}, and when $K$ is a convex body we prove a convergence rate in $\bigO((\log r/ r)^2)$
(Theorem~\ref{THM_bound_convex_body}),  improving the previous rate in $\bigO(1/r)$ from \cite{deKlerkLaurent2017}.
As a byproduct of our analysis, we can show the stronger bound $\bigO((\log r / r)^\beta)$ when all partial derivatives of $\mainfunc$ of order at most $\beta-1$ vanish at a global minimizer (see Theorem~\ref{THM_fasterconvergence}).
We begin with introducing Assumption~\ref{ASSU_one}.

\begin{assu}
\label{ASSU_one}
There exist constants $\assuoneepsilon, \assuoneeta > 0$ such that 
\begin{equation}
    \label{EQ_interior_cone}
    \vol\big(\ball{\delta}{x} \cap K\big) \geq \assuoneeta \vol (B^n_\delta(x)) = \delta^n \assuoneeta  \vol (B^n) 
     \text{ for all } x \in K \text{ and } 0 < \delta \leq \assuoneepsilon.
     \end{equation}
\end{assu}
In other words, Assumption~\ref{ASSU_one} claims that $\mainset$ contains a constant fraction $\eta_K$ of the full ball $\ball{\delta}{x}$ around $x$ for any radius $\delta > 0$ small enough. 
This rather mild assumption is discussed in some detail in \cite{deKlerk2017}. In particular, it is implied by the so-called {\em interior cone condition} used in approximation theory; it is satisfied by convex bodies and, more generally, by sets that are star-shaped with respect to a ball.

\begin{thm}
\label{THM_bound_interior_cone}
Let $\mainset \subseteq \R^n$ be a compact set satisfying Assumption~\ref{ASSU_one}. Then we have
\begin{equation}
    \ubError{r}{}{\mainfunc} = \bigO\bigg(\frac{\log r}{r}\bigg).
\end{equation}
\end{thm}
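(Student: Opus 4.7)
The plan follows the second strategy from Section~\ref{SEC_relatedwork}: construct explicit sum-of-squares densities that concentrate near a global minimizer $a$ of $\mainfunc$ on $\mainset$, and bound the resulting integral directly. The main novelty over \cite{deKlerk2017,deKlerkLaurent2017} is to replace the Gaussian/Boltzmann truncations by Kroo's needle polynomials~\cite{Kroo1992} (see Section~\ref{SEC_needles}), built from the Chebyshev polynomials $\Cheby{r}$; these concentrate far more sharply than a Gaussian of comparable degree.

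First, by Lemma~\ref{LEM_Lipschitzupperestimator} one may replace $\mainfunc$ by the linear Lipschitz upper estimator
\[ g(x):=\mainfunc(a)+\gradConst{\mainset}{\mainfunc}\,\|x-a\|, \]
exact at $a$, so Lemma~\ref{LEM_upperestimator} reduces the task to bounding $\ubError{r}{}{g}$. Set $D:=\max_{x\in \mainset}\|x-a\|$, fix a scale $\delta\in(0,\assuoneepsilon]$ to be optimized later, and put $c:=2\delta^2/(D^2-\delta^2)$. Define
\[ h_r(u):=\Cheby{r}\bigl(1+c-cu/\delta^2\bigr), \qquad q_r(x):=h_r(\|x-a\|^2)^2. \]
Since $\|x-a\|^2$ is a polynomial in $x$, $q_r$ is the square of a polynomial in $x$, and hence automatically a sum of squares. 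Its degree is $4r$, which fits into $\Sigma_r$ after replacing $r$ by $\lfloor r/2\rfloor$; this only affects the absolute constants and not the asymptotic rate.

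Next, I would use the classical Chebyshev bounds $|\Cheby{r}(y)|\le 1$ on $[-1,1]$ and $\Cheby{r}(1+t)\ge \tfrac12 e^{r\sqrt{2t}}$ for $t\ge 0$. The value $c$ is chosen precisely so that $1+c-cu/\delta^2\in[-1,1]$ for $u\in[\delta^2,D^2]$; hence $q_r(x)\le 1$ for $x\in \mainset$ with $\|x-a\|\ge \delta$, while for $x\in \ball{\delta/\sqrt{2}}{a}$ one has $q_r(x)\ge \Cheby{r}(1+c/2)^2\gtrsim e^{2r\sqrt{c}}$. Setting $\widetilde q_r:=q_r/\int_\mainset q_r\,dx$ and splitting the integration according to whether $\|x-a\|\le \delta$, this yields
\[
\ubError{r}{}{g}\le \gradConst{\mainset}{\mainfunc}\int_\mainset \|x-a\|\,\widetilde q_r(x)\,dx
\le \gradConst{\mainset}{\mainfunc}\,\delta \;+\; \frac{\gradConst{\mainset}{\mainfunc}\,D\,\vol(\mainset)}{\int_\mainset q_r(x)\,dx}.
\]

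For the denominator I invoke Assumption~\ref{ASSU_one} at radius $\delta/\sqrt{2}\le \assuoneepsilon$, giving
\[
\int_\mainset q_r\,dx \;\ge\; \Cheby{r}(1+c/2)^2\,\vol\bigl(\ball{\delta/\sqrt{2}}{a}\cap \mainset\bigr)
\;\gtrsim\; \assuoneeta\,\delta^n\,e^{2r\sqrt{c}},
\]
and $\sqrt{c}\gtrsim \delta/D$. Choosing $\delta:=C\log r/r$ with $C>0$ large enough (depending only on $n$ and $D$) makes the exponential $e^{2r\sqrt{c}}\gtrsim r^{\Omega(C)}$ dominate the polynomial factor $\delta^{-n}\sim (r/\log r)^n$, so the second summand in the bound on $\ubError{r}{}{g}$ is $O(1/r)$, while the first is $\gradConst{\mainset}{\mainfunc}\,C\log r/r$. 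Combining gives $\ubError{r}{}{\mainfunc}\le \ubError{r}{}{g}=O(\log r/r)$, as desired. The main obstacle is precisely this balancing: one needs a sharp enough lower bound on $\Cheby{r}(1+c/2)$ so that the exponential concentration really beats the $1/\delta^n$ penalty coming from Assumption~\ref{ASSU_one} at the critical scale $\delta=\Theta(\log r/r)$; the rest is bookkeeping built on the upper-estimator reduction, the numerator split, and the mass lower bound near $a$.
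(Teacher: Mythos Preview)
Your proposal is correct and follows essentially the same approach as the paper: reduce to the Lipschitz estimator $\gradConst{\mainset}{\mainfunc}\|x-a\|$, plug in a squared Chebyshev needle $T_r^2(1+\text{const}-\text{const}\cdot\|x-a\|^2)$ as the sos density, split the integral at scale $\delta$, use Assumption~\ref{ASSU_one} for the mass lower bound near $a$, and balance with $\delta\asymp \log r/r$. Two minor remarks: your stated growth bound $\Cheby{r}(1+t)\ge \tfrac12 e^{r\sqrt{2t}}$ is slightly too strong (the correct exponent is $r\sqrt{t}\log(1+\sqrt 2)$, cf.\ Lemma~\ref{LEM_Cheby_bound}), though this does not affect the argument; and the paper obtains the denominator lower bound via the Markov-inequality estimate of Lemma~\ref{LEM_needlevolume} on a ball of radius $\Theta(1/r^2)$ rather than via the Chebyshev growth on $B_{\delta/\sqrt 2}(a)$ as you do---either route works and yields the same rate.
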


\begin{thm}
\label{THM_bound_convex_body}
Let $\mainset \subseteq \R^n$ be a convex body. Then we have
\begin{equation}
    \ubError{r}{}{\mainfunc} = \bigO\bigg(\frac{\log^2 r}{r^2}\bigg).
\end{equation}
\end{thm}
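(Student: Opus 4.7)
The plan is to follow the second strategy of Section~\ref{SEC_relatedwork} and exhibit, for each $r$, an explicit sum-of-squares density $q_r^2\in\Sigma_r$ concentrating sharply around a global minimizer $a$ of $f$ on $K$, using the Kroó needle polynomials from Section~\ref{SEC_needles}. The improvement over the $\bigO(1/r)$ rate of \cite{deKlerkLaurent2017} comes from combining two ingredients: the $\Theta(1/r^2)$ endpoint resolution of Chebyshev polynomials (versus the $\Theta(1/r)$ interior resolution), and the sign condition $\langle \nabla f(a),x-a\rangle\ge 0$ on $K$, available thanks to convexity and Lemma~\ref{LEM_first_order_condition}. By Remark~\ref{REM_reducetoestimator} we may assume $f_{\min,K}=f(a)=0$; since the case $a\in\interior K$ is already settled by Theorem~\ref{THM_interiorglobalminimizer} with the stronger rate $\bigO(1/r^2)$, we assume $a\in\partial K$. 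Lemma~\ref{LEM_quadraticupperestimator} then lets us replace $f$ by the quadratic upper estimator
\[
g(x)=\langle \nabla f(a),x-a\rangle+\gamma\|x-a\|^2,\qquad \gamma=\HessConst{K}{f},
\]
which satisfies $g\ge 0$ on $K$ because $\nabla f(a)\in\normalcone{K}{a}$.

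Next, by Lemma~\ref{LEM_lineartransformation} I would apply an affine change of coordinates in which $a=(-1,0,\ldots,0)$, the gradient $\nabla f(a)=\lambda e_1$ (for some $\lambda\ge 0$) points inward along $e_1$, and $K\subseteq[-1,1]^n$. The projection of $K$ onto the first axis is then an interval whose left endpoint is $a_1=-1$, and convexity together with the non-emptiness of $\interior K$ guarantees an inscribed cone at $a$, namely $\conv(\{a\}\cup\ball{\rho}{c})$ for some ball $\ball{\rho}{c}\subseteq\interior K$. I then build the separable sum-of-squares
\[
q_r(x)^2=\prod_{i=1}^n\psi_{r,i}(x_i)^2,
\]
where each $\psi_{r,i}$ is a Kroó-type Chebyshev needle of degree $\lfloor r/n\rfloor$, so that $q_r\in\Sigma_r$. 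The widths are tuned to the local geometry at $a$: $\psi_{r,1}$ is concentrated near the endpoint $x_1=-1$ with window width $\delta_\perp=\Theta(\log^2 r/r^2)$ (exploiting the endpoint $1/r^2$ resolution of Chebyshev polynomials), and for $i\ge 2$, $\psi_{r,i}$ is concentrated near the interior point $x_i=0$ with window width $\delta_\parallel=\Theta(\log r/r)$; the pointwise tail outside each window is $\bigO(r^{-C})$ for a constant $C$ chosen as large as needed.

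The next step is to estimate the numerator and denominator of $\int_K g\, q_r^2\,dx\big/\int_K q_r^2\,dx$. On the effective support
\[
S=[-1,\,-1+\delta_\perp]\times\prod_{i=2}^n[-\delta_\parallel,\delta_\parallel]
\]
of $q_r^2$, one has $g(x)\le \lambda\delta_\perp+\gamma\bigl(\delta_\perp^2+(n-1)\delta_\parallel^2\bigr)=\bigO(\log^2 r/r^2)$, so the core contribution to the numerator is at most $\bigO(\log^2 r/r^2)\cdot\int_K q_r^2\,dx$, while the tail contribution from $K\setminus S$ is bounded by $r^{-C}\cdot\int_K g\,dx=\bigO(r^{-C})$. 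For the denominator, the inscribed cone yields a subset of $S\cap K$ of volume at least a positive constant times $\delta_\perp^n$, on which $q_r^2$ is of order $1$ (under the normalization $\max\psi_{r,i}=1$); hence $\int_K q_r^2\,dx\ge c_0(\log^2 r/r^2)^n$ for some $c_0>0$. Choosing $C>2n$ makes the tail negligible against this lower bound, so the overall ratio is $\bigO(\log^2 r/r^2)$, yielding the theorem.

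The main technical obstacles are twofold. First is the needle construction itself: the $\psi_{r,i}$ must simultaneously satisfy three requirements (total degree at most $r$, the prescribed window widths $\delta_\perp,\delta_\parallel$, and polynomially small tails $\bigO(r^{-C})$), and the logarithmic factors in the final rate arise precisely from the unavoidable tradeoff between window width and tail decay in each univariate needle. Second, and more subtle, is the lower bound on $\int_K q_r^2\,dx$: at a boundary minimizer, $K$ may narrow sharply to a corner or edge so that the rectangular support $S$ of the separable needle protrudes outside $K$; one then uses the inscribed cone to show that $|S\cap K|$ is still polynomially large in $r^{-1}$, which is enough to absorb the polynomial tail. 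Conceptually, the gain over Theorem~\ref{THM_bound_interior_cone} (which uses the Lipschitz upper estimator of Lemma~\ref{LEM_Lipschitzupperestimator} and gives only $\bigO(\log r/r)$) comes from placing the sharper endpoint needle in the single direction where the linear part of $g$ grows, while the wider (and cheaper) interior needles are used in the tangent directions where the linear part of $g$ vanishes and only the quadratic part contributes.
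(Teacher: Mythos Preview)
Your proposal is correct and follows the same route as the paper's Section~\ref{SEC:THM:BODY}: the paper's density $\sigma_r^h(x)=\halfneedle{r}{h^2}(\langle x,v\rangle)\prod_{j=1}^{n-1}\needle{r}{h}(\langle x,w_j\rangle)$, with $v=\nabla f(a)/\|\nabla f(a)\|$ and $h=\Theta(\log r/r)$, is exactly your endpoint (half-)needle of width $\delta_\perp=h^2$ in the gradient direction times regular needles of width $\delta_\parallel=h$ in the tangent directions, after which both arguments bound the numerator via the Taylor estimator of Lemma~\ref{LEM_quadraticupperestimator} on the box $S$ (the paper's $P_h$) and the denominator via Assumption~\ref{ASSU_one} (your inscribed cone). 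Two small fixes: the region on which $q_r^2=\Theta(1)$ has volume only $\Theta(r^{-2n})$ by the Markov argument of Lemma~\ref{LEM_needlevolume} rather than $\delta_\perp^n$, so you should take $C\ge 2n+2$ rather than $C>2n$; and the degenerate case $\nabla f(a)=0$ deserves a separate one-line mention (then regular needles suffice in all directions, cf.\ Proposition~\ref{PROP_bound_interior_cone_univariate} with $\beta=2$).
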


\medskip
\noindent
\textbf{Outline of the proofs.} First of all, if $f$ has a global minimizer  which  lies in the interior of $\mainset$, then we may apply Theorem~\ref{THM_interiorglobalminimizer} to obtain a convergence rate in $\bigO(1/r^2)=\bigO((\log r/r)^2)$ and so there is nothing  to prove. Hence, in the rest of the section, we  assume that $f$ has  a global minimizer  which lies on the boundary of $K$.

The basic proof strategy for both theorems is to construct explicit sum-of-squares polynomials $q_r$ giving good feasible solutions to the program \eqref{EQ_fmin_measures_bounded}.
The building blocks for these polynomials $\Laspolye{r}$ will be provided by the \emph{needle polynomials} from \cite{Kroo1992}; these are degree $r$ univariate polynomials $\needle{r}{h}, \halfneedle{r}{h}$, parameterized by a constant $h \in (0,1)$, that approximate well the Dirac delta at  $0$ on $[-1, 1]$ and $[0, 1]$, respectively.

For Theorem~\ref{THM_bound_interior_cone}, we are able to use the needle polynomials $\needle{r}{h}$ directly after applying the transform $x \mapsto \|x\|$ and selecting the value $h = h(r)$ carefully. We then make use of Lipschitz continuity of $f$ to bound the integral in the objective of \eqref{EQ_fmin_measures_bounded}.

For Theorem~\ref{THM_bound_convex_body}, a more complicated analysis is needed. 
We then construct $\Laspolye{r}$ as a product of $n$ univariate well-selected needle polynomials, exploiting geometric properties  of the boundary of $K$ in the neighbourhood of a global minimizer.

\medskip 
\noindent
\textbf{Simplifying assumptions.}
In order to simplify notation in the subsequent proofs we assume throughout this section that $0 \in K \subseteq B^n \subseteq \R^n$, and $\mainfuncmin  = f(0) = 0$, so $a=0$ is a global minimizer of $f$ over $K$.  As $\mainset$ is compact, and in light of Lemma~\ref{LEM_lineartransformation}, this is without loss of generality.

We now introduce needle polynomials and their main properties in Section~\ref{SEC_needles}, and then  give the proofs of \mbox{Theorems \ref{THM_bound_interior_cone} and \ref{THM_bound_convex_body}} in \mbox{Sections \ref{SEC:THM:ASSU} and \ref{SEC:THM:BODY}}, respectively.

\subsection{Needle polynomials}
\label{SEC_needles}

We begin by recalling some of the basic properties of the Chebyshev polynomials.
The Chebyshev polynomials  $\Cheby{r} \in \R[t]_r$ can be defined by the recurrence relation \eqref{eq:3term}, and also by the following explicit expression:
\begin{equation}
    \label{EQ_Chebyexplicit}
    \Cheby{r}(t) = \begin{cases} \cos (r \arccos t) &\text{for } |t| \leq 1, \\ \frac{1}{2}(t + \sqrt{t^2 -1})^r + \frac{1}{2}(t - \sqrt{t^2 -1})^{r} &\text{for } |t| \ge 1. \end{cases}
\end{equation}
From this definition, it can be seen that $|\Cheby{r}(t)| \leq 1$ on the interval $[-1, 1]$, and that $T_r(t)$ is nonnegative and monotone nondecreasing on  $[1, \infty)$.
The Chebyshev polynomials form an orthogonal basis of $\R[t]$ with respect to the Chebyshev measure (with weight $(1-t^2)^{-1/2}$) on $[-1,1]$ and they are used extensively in approximation theory. For instance, they are the polynomials attaining equality in the Markov brother's inequality on $[-1, 1]$, recalled below.

\begin{lem}[Markov Brothers' Inequality; see, e.g., \cite{Shadrin2005}]
\label{LEM_markov}
Let $p\in \mathbb R[t]$ be a univariate polynomial of degree at most $r$. Then, for any scalars $a< b$,  we have
\begin{equation}
    \max_{t \in [a, b]}|p'(t)| \leq \frac{2r^2}{b - a} \cdot \max_{t \in [a,b]}|p(t)|.
\end{equation}
\end{lem}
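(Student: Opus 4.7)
The plan is to reduce to the case $[a,b]=[-1,1]$ via an affine change of variable, and then to invoke the classical Markov brothers' inequality on $[-1,1]$, whose proof hinges on the extremal properties of the Chebyshev polynomials.

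First, given $p \in \R[t]$ of degree at most $r$, I would set $\tilde p(s) := p\bigl(\frac{a+b}{2} + \frac{b-a}{2}s\bigr)$. Then $\tilde p$ has degree at most $r$, and since the map $s \mapsto \frac{a+b}{2}+\frac{b-a}{2}s$ is a bijection between $[-1,1]$ and $[a,b]$, we have $\max_{s \in [-1,1]}|\tilde p(s)| = \max_{t\in [a,b]}|p(t)|$. By the chain rule, $\tilde p'(s) = \frac{b-a}{2}\cdot p'\bigl(\frac{a+b}{2}+\frac{b-a}{2}s\bigr)$, so $\max_{s \in [-1,1]}|\tilde p'(s)| = \frac{b-a}{2}\cdot \max_{t\in[a,b]}|p'(t)|$. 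Plugging this in shows that the claimed inequality is equivalent to the normalized estimate $\max_{s\in[-1,1]}|\tilde p'(s)| \leq r^2\max_{s\in[-1,1]}|\tilde p(s)|$, which is the classical form of the Markov brothers' inequality on $[-1,1]$.

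For this normalized bound, I would follow the classical route of expanding $p$ in the Lagrange basis at the Chebyshev extremal nodes $t_k = \cos(k\pi/r)$, $k=0,1,\dots,r$: writing $p(t)=\sum_{k=0}^r p(t_k) L_k(t)$ and differentiating, the triangle inequality together with $|p(t_k)| \leq \|p\|_{\infty,[-1,1]}$ yields $|p'(t)| \leq \|p\|_{\infty,[-1,1]} \cdot \sum_{k=0}^r|L_k'(t)|$, so the task reduces to proving the pointwise bound $\sum_{k=0}^r|L_k'(t)|\leq r^2$ on $[-1,1]$. This in turn rests on the identity $\Cheby{r}(\cos\theta)=\cos(r\theta)$, which enables an explicit computation of the $L_k'$ at the nodes. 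Sharpness of the constant $r^2$ is witnessed by the Chebyshev polynomial $\Cheby{r}$ itself, for which $\|\Cheby{r}\|_{\infty,[-1,1]}=1$ while $\Cheby{r}'(1)=r^2$ (extracted from $\Cheby{r}(\cos\theta)=\cos(r\theta)$ via L'Hopital at $\theta=0$).

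The main obstacle is the uniform estimate $\sum_{k=0}^r|L_k'(t)|\leq r^2$, which requires a careful combinatorial manipulation of the derivatives of the Lagrange factors at the Chebyshev nodes; the full argument is classical and detailed in \cite{Shadrin2005}.
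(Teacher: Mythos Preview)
The paper does not give its own proof of this lemma: it is stated as a classical result with a reference to \cite{Shadrin2005}, and no argument is supplied. Your proposal is a correct outline of the standard proof --- the affine reduction to $[-1,1]$ is exactly right, and the classical Markov inequality $\max_{[-1,1]}|p'|\le r^2\max_{[-1,1]}|p|$ (with equality for $\Cheby{r}$) is indeed what is being invoked; your sketch of the Lagrange-interpolation-at-Chebyshev-nodes route is one of the standard ways to establish it, and deferring the technical estimate $\sum_k|L_k'(t)|\le r^2$ to \cite{Shadrin2005} is consistent with how the paper itself treats the result.
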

Kro\'o and Swetits \cite{Kroo1992} use the Chebyshev polynomials to construct the so-called (univariate) \emph{needle polynomials}.
\begin{defn}
For $r \in \N, h \in (0, 1)$, we define the {\em needle polynomial} $\needle{r}{h} \in \mathbb R[t]_{4r}$ by
\begin{equation}
\label{EQ_needle_polynomial}
\needle{r}{h}(t) = \frac{\Cheby{r}^2(1 + h^2 - t^2)}{\Cheby{r}^2(1 + h^2)}.
\end{equation}
Additionally, we define the {\em $\frac{1}{2}$-needle polynomial} $\halfneedle{r}{h} \in \mathbb R[t]_{4r}$ by
\begin{equation}
    \halfneedle{r}{h}(t) = \Cheby{2r}^2\bigg(\frac{2 + h - 2t}{2 - h}\bigg) \cdot \Cheby{2r}^{-2}\bigg(\frac{2+h}{2-h}\bigg).
\end{equation}
\end{defn}
By construction, the needle polynomials $\needle{r}{h}$ and $\halfneedle{r}{h}$ are squares and have degree $4r$. 
They approximate well the Dirac delta function at $0$ on $[-1, 1]$ and $[0, 1]$, respectively. In \cite{Sendov}, a construction similar to the needles presented here is used to obtain the best polynomial approximation of the Dirac delta in terms of the Hausdorff distance.

\begin{figure}
    \centering
    \includegraphics[width = 0.8\textwidth]{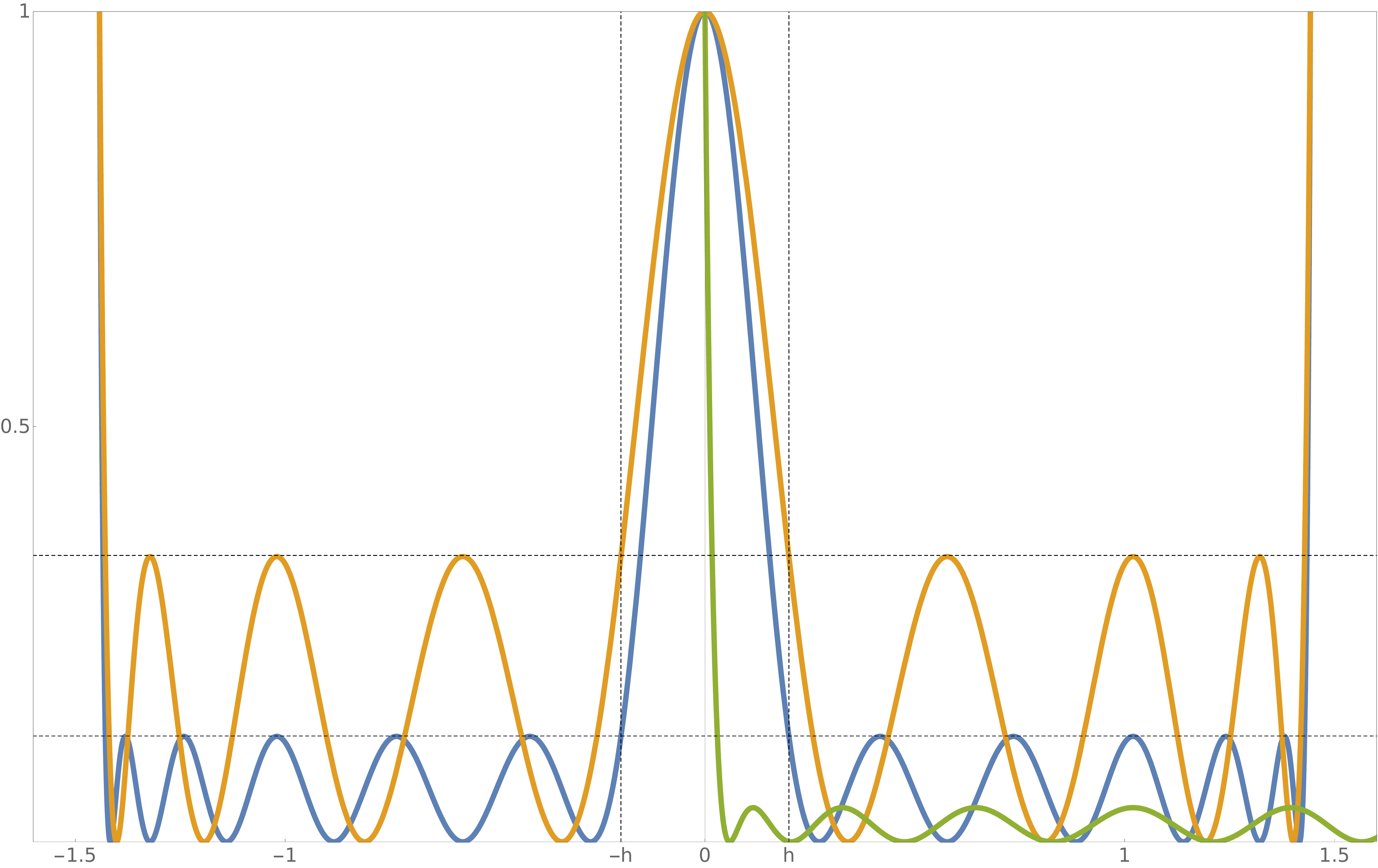}
    \caption{The needle polynomials $\needle{4}{h}$ (orange), $\needle{6}{h}$ (blue) and the 
    $\frac{1}{2}$-needle $\halfneedle{4}{h^2}$ (green) for $h = 1/5.$}
    \label{FIG_standard_needle}
\end{figure}

The needle polynomials  satisfy the following bounds (see Figure~\ref{FIG_standard_needle} for an illustration).

\begin{thm}[cf. \cite{Kroo1992, KrooLubinsky2012, Kroo2015}]
\label{THM_needle_properties}
For any $r \in \N$ and $h \in (0, 1)$, the following properties  hold for  the polynomials $\needle{r}{h}$ and $\halfneedle{r}{h}$:
\begin{align}
\label{EQ_needle_0}
    &\needle{r}{h}(0) = 1, \\
 \label{EQ_needle_1}
     0 \leq &\needle{r}{h}(t) \leq 1 &&\text{for } t \in [-1, 1], \\
    \label{EQ_needle_small}
    &\needle{r}{h}(t) \leq 4 e^{-{1\over 2} rh} &&\text{for } t \in [-1 , 1] \text{ with } |t| \ge h, 
\\
\\
\label{EQ_12needle_0}
    &\halfneedle{r}{h}(0) = 1, \\
  \label{EQ_12needle_1}
      0 \leq &\halfneedle{r}{h}(t) \leq 1 &&\text{for } t \in [0, 1], \\
    \label{EQ_12needle_small}
    &\halfneedle{r}{h}(t) \leq 4 e^{-{1\over 2}r\sqrt{h}} &&\text{for } t \in [0, 1] \text{ with } t \ge h.
\end{align}
\end{thm}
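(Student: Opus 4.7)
The plan is to verify each of the six listed inequalities by direct computation from the explicit formula \eqref{EQ_Chebyexplicit} for the Chebyshev polynomials. The properties \eqref{EQ_needle_0} and \eqref{EQ_12needle_0} are immediate: substituting $t = 0$ into the definitions of $\needle{r}{h}$ and $\halfneedle{r}{h}$ yields a ratio of equal quantities. The remaining four inequalities reduce to careful tracking of where the argument of $T_r$ lands: inside $[-1,1]$ (where $|T_r| \le 1$) or strictly to the right of $1$ (where $T_r$ is nonnegative and monotone nondecreasing).

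For \eqref{EQ_needle_1}, I would split the interval $[-1,1]$ according to $|t| \ge h$ or $|t| \le h$. In the first case, $1 + h^2 - t^2 \in [h^2, 1] \subseteq [-1,1]$, so $T_r^2(1+h^2-t^2) \le 1 \le T_r^2(1+h^2)$ since $1+h^2 > 1$. In the second case, $1 + h^2 - t^2 \in [1, 1+h^2]$, on which $T_r$ is nonnegative and monotone nondecreasing; hence $0 \le T_r^2(1+h^2-t^2) \le T_r^2(1+h^2)$. For \eqref{EQ_12needle_1}, the affine change of variable $u(t) := (2 + h - 2t)/(2-h)$ sends $[0,h]$ to $[1, (2+h)/(2-h)]$ and $[h,1]$ to $[h/(2-h), 1] \subseteq [-1,1]$, so the same case analysis applies.

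For the exponential decay bounds \eqref{EQ_needle_small} and \eqref{EQ_12needle_small}, the upper estimates on the numerators follow from the case $|t| \ge h$ (respectively $t \ge h$) already treated, so both reduce to a sharp lower bound on the denominator. Using the hyperbolic branch of \eqref{EQ_Chebyexplicit} and the identity $(1+h^2) + h\sqrt{h^2+2} \ge 1 + \sqrt{2}\,h$ valid for $h \in (0,1)$, one gets $T_r(1+h^2) \ge \tfrac{1}{2}(1+\sqrt 2\, h)^r$. Similarly, for the $\tfrac{1}{2}$-needle the key algebraic simplification is
\[
  x + \sqrt{x^2-1} \;=\; \frac{\sqrt 2 + \sqrt h}{\sqrt 2 - \sqrt h} \;\ge\; 1 + \sqrt{2h}
  \quad \text{for } x := \frac{2+h}{2-h},
\]
so that $T_{2r}\bigl((2+h)/(2-h)\bigr) \ge \tfrac12 (1+\sqrt{2h})^{2r}$. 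The elementary estimate $\log(1+y) \ge c\, y$ (with an absolute constant $c$) for $y$ in a bounded interval then converts these polynomial lower bounds into the claimed exponentials $4 e^{-rh/2}$ and $4 e^{-r\sqrt h/2}$.

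The only step where care is needed is the last one: matching the precise constants in the exponent claimed in \eqref{EQ_needle_small} and \eqref{EQ_12needle_small}. This is not a real obstacle, since any linear lower bound $\log(1+y) \ge c y$ for $y\in(0,\sqrt 2]$ gives exponential decay, and the constant $\tfrac12$ in the exponent is loose enough that $c = \tfrac{1}{\sqrt 2}$ (or even smaller) suffices. Throughout, the proof is essentially a manipulation of the Chebyshev formula \eqref{EQ_Chebyexplicit}, and indeed the argument tracks that of \cite{Kroo1992, KrooLubinsky2012, Kroo2015}.
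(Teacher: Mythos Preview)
Your proof is correct and follows essentially the same approach as the paper: the same case split on $|t|\lessgtr h$ (resp.\ $t\lessgtr h$) for the boundedness claims, and the same use of the hyperbolic branch of $T_r$ together with a linear lower bound on $\log(1+y)$ for the exponential decay. The paper packages the denominator estimate as a separate lemma $T_r(1+t)\ge \tfrac12 e^{r\sqrt t/4}$ (proved via concavity of $\log$), and for the $\tfrac12$-needle it takes the cruder route $(2+h)/(2-h)\ge 1+h$ rather than your exact identity $x+\sqrt{x^2-1}=(\sqrt2+\sqrt h)/(\sqrt2-\sqrt h)$; one small caveat is that $\log(1+y)\ge cy$ on $(0,\sqrt2]$ actually requires $c\le \log(1+\sqrt2)/\sqrt2\approx 0.623$, so $c=1/\sqrt2$ is slightly too large, but as you note any such $c$ comfortably delivers the claimed constant $\tfrac12$ in the exponent.
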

As this result plays a central role in our treatment we give a short proof, following the argument given in \cite{LasserrePauwels2017}. We need the following lemma.

\begin{lem}
\label{LEM_Cheby_bound}
For any $r \in \N$, $t \in [0, 1)$ we have
$
    \Cheby{r}(1 + t) \geq \frac{1}{2}e^{r \sqrt{t} \log (1 + \sqrt{2}) }\ge {1\over 2} e^{{1\over 4}r\sqrt t}.
$
\end{lem}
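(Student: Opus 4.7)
The plan is to combine the explicit formula \eqref{EQ_Chebyexplicit} for $\Cheby{r}$ on $[1,\infty)$ with a simple concavity argument for $\log$.

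First I would invoke \eqref{EQ_Chebyexplicit} at the point $1+t \ge 1$ to write
\[
\Cheby{r}(1+t) = \tfrac{1}{2}\bigl(1+t+\sqrt{(1+t)^2-1}\bigr)^r + \tfrac{1}{2}\bigl(1+t-\sqrt{(1+t)^2-1}\bigr)^r.
\]
For $t \in [0,1)$ one checks that $0 \le 1+t-\sqrt{(1+t)^2-1} \le 1$, so the second summand is nonnegative and can be dropped. Using $(1+t)^2-1 = t(2+t) \ge 2t$, this leaves the clean estimate
\[
\Cheby{r}(1+t) \ge \tfrac{1}{2}\bigl(1+\sqrt{2t}\bigr)^r.
\]

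The main remaining step is to show
\[
\log(1+\sqrt{2t}) \ge \sqrt{t}\,\log(1+\sqrt{2}) \quad \text{for all } t \in [0,1],
\]
which is equivalent (via the substitution $u=\sqrt{t}$) to $\log(1+\sqrt{2}\,u) \ge u\log(1+\sqrt{2})$ on $[0,1]$. This is the only real content of the lemma: it follows immediately from the concavity of $u \mapsto \log(1+\sqrt{2}\,u)$, together with the boundary values (equality at $u=0$ and $u=1$), since the linear interpolant on $[0,1]$ is precisely $u\log(1+\sqrt{2})$. I expect this concavity step to be the part that needs to be spelled out most carefully, though it is not really a genuine obstacle.

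Raising to the $r$-th power and combining with the previous bound yields $\Cheby{r}(1+t) \ge \tfrac{1}{2}e^{r\sqrt{t}\log(1+\sqrt{2})}$, which is the first inequality. The second inequality then reduces to the numerical verification $\log(1+\sqrt{2}) \ge 1/4$, which holds since $\log(1+\sqrt{2}) = \log(2.414\ldots) \approx 0.881$. This completes the proof.
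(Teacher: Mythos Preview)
Your proof is correct and follows essentially the same route as the paper: drop the second term in the explicit formula \eqref{EQ_Chebyexplicit}, use $(1+t)^2-1\ge 2t$ to get $\Cheby{r}(1+t)\ge\tfrac12(1+\sqrt{2t})^r$, and then apply concavity of the logarithm to bound $\log(1+\sqrt{2}\sqrt{t})\ge\sqrt{t}\log(1+\sqrt{2})\ge\tfrac14\sqrt{t}$. The only cosmetic difference is that the paper phrases the concavity step as Jensen's inequality for $\log$ applied to the convex combination $\sqrt{t}(1+\sqrt{2})+(1-\sqrt{t})\cdot 1$, whereas you phrase it as the concave function $u\mapsto\log(1+\sqrt{2}u)$ lying above its secant on $[0,1]$.
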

\begin{proof}
Using the explicit expression \eqref{EQ_Chebyexplicit} for $\Cheby{r}$, we have
\begin{align*}
        2\Cheby{r}(1 + t) 
        &\geq \bigg(1 + t + \sqrt{(1 + t)^2 -1}\bigg)^r =(1 + t + \sqrt{2 t + t^2})^r
                        \\
        &\geq (1 + \sqrt{2t})^r 
        =e^{r \log (1 +  \sqrt {2}\cdot \sqrt{t}) }.
\end{align*}
By concavity of the logarithm, we have
    \begin{align*}
        \log(1 + \sqrt{2} \sqrt{t}) &= \log (\sqrt{t} \cdot (1 + \sqrt{2}) + (1 - \sqrt{t})\cdot 1) \\
                                    &\geq \sqrt{t} \cdot \log (1 + \sqrt{2}) + (1-\sqrt{t})\log(1) = \sqrt{t} \cdot \log (1 + \sqrt{2})\ge {1\over 4}\sqrt t,
    \end{align*}
    and so, using the above lower bound on $T_r(1+t)$, 
    we obtain
    \begin{equation*}
        \Cheby{r}(1 + t) \geq \frac{1}{2} e^{ r\sqrt{t} \log (1 + \sqrt{2})} \ge \frac{1}{2} e^{{1\over 4} r\sqrt{t}}.
\end{equation*}
\qedMP
\end{proof}

\begin{proof}[of Theorem~\ref{THM_needle_properties}]
    Properties \eqref{EQ_needle_0}, \eqref{EQ_12needle_0} are clear.
    We first check \eqref{EQ_needle_1}-\eqref{EQ_needle_small}. If $|t|\le h$ then $1+h^2\ge 1+h^2-t^2\ge 1$, giving $\needle{r}{h}(t)\leq \needle{r}{h}(0) =  1$ by monotonicity of $T_r(t)$ on $[1,\infty)$.
    Assume now $h\le |t|\le 1$. Then $T_r^2(1+h^2-t^2)\le 1$ as $1+h^2-t^2\in [-1,1]$, and $T_r^2(1+h^2)\ge 1$ (again by monotonicity), which implies $\needle{r}{h}(t)\le 1$.
    In addition, since $T_r(1+h^2)\ge {1\over 2} e^{{1\over 4}rh}$ by Lemma~\ref{LEM_Cheby_bound}, we obtain
    $\needle{r}{h}(t) \le T_r^{-2}(1+h^2)\le 4 e^{-{1\over 2}rh}$.
    
    We now check  \eqref{EQ_12needle_1}-\eqref{EQ_12needle_small}.
    If $t\in [0,h]$ then $\halfneedle{r}{h}(t) \leq \halfneedle{r}{h}(0) = 1$ follows by monotonicity of $T_{2r}(t)$ on $[1,\infty).$
    Assume now $h\le t\le 1$. Then, ${2+h-2t\over 2-h}\in [-1,1]$ and thus $T_{2r}^2\big({2+h-2t\over 2-h}\big) \le 1$.
    On the other hand, we have $T_{2r}^2\big({2+h\over 2-h}\big) \ge 1$, which gives $\halfneedle{r}{h}(t)\le 1$.
    In addition,
    as ${2+h\over 2-h}\ge 1+h \ge 1$, using again monotonicity of $T_{2r}$ and Lemma~\ref{LEM_Cheby_bound}, we get 
    $T_{2r}^2\big({2+h\over 2-h}\big) \ge T_{2r}^2(1+h)\ge {1\over 4}e^{{1\over 2}r\sqrt h}$, which implies \eqref{EQ_12needle_small}.
        \qedMP      
\end{proof}

\medskip\noindent
We now give 
 a simple  lower estimator for a nonnegative polynomial $p$ with $p(0) = 1$. This lower estimator  will be useful later to lower bound the integral of the needle and $\frac{1}{2}$-needle polynomials on small intervals $[-h, h]$ and $[0, h]$, respectively.

\begin{lem}
\label{LEM_needlevolume}
Let $p\in \R[t]_r$ be a   polynomial, which is nonnegative over $\R_{\geq 0}$ and satisfies  $p(0)=1$,  $p(t) \leq 1$ for all $t\in [0,1]$. 
Let $\needlelb{r} : \R_{\geq 0} \rightarrow \R_{\geq 0}$ be defined by
\begin{equation}
\needlelb{r}(t) = \begin{cases}
            1 - 2r^2 t &\quad \text{ if } t \leq \frac{1}{2r^2},\\
            0 &\quad \text{ otherwise}.
       \end{cases}   
\end{equation}
Then $\needlelb{r}(t) \leq p(t)$ for all $t \in \R_{\geq 0}$.
\end{lem}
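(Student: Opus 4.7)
The plan is to split the argument into two regimes according to whether $t \leq \tfrac{1}{2r^2}$ or $t \geq \tfrac{1}{2r^2}$, and to treat the nontrivial small-$t$ regime via Markov brothers' inequality (Lemma~\ref{LEM_markov}).

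\emph{Easy regime: $t \geq \tfrac{1}{2r^2}$.} Here $\needlelb{r}(t) = 0$ by definition, and since $p$ is nonnegative on $\R_{\geq 0}$ by assumption, the inequality $\needlelb{r}(t) \leq p(t)$ is automatic.

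\emph{Main regime: $0 \leq t \leq \tfrac{1}{2r^2}$.} Since $|p(s)| \leq 1$ on $[0,1]$ (the hypothesis gives $0 \le p(s) \le 1$ there), Markov brothers' inequality applied with $[a,b]=[0,1]$ yields
\begin{equation}
    \max_{s \in [0,1]} |p'(s)| \leq 2r^2 \cdot \max_{s \in [0,1]} |p(s)| \leq 2r^2.
\end{equation}
Combined with the mean value theorem, this gives $p(t) \geq p(0) - 2r^2 \cdot t = 1 - 2r^2 t = \needlelb{r}(t)$ for every $t \in [0, \tfrac{1}{2r^2}] \subseteq [0,1]$, which is exactly what is needed.

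Putting the two regimes together proves the lemma. There is no real obstacle here: the argument is essentially a one-line application of Markov brothers' inequality together with the nonnegativity of $p$; the subtlety is only to recognize that $\needlelb{r}$ was tailored precisely so that its slope $-2r^2$ matches the worst-case derivative bound produced by Lemma~\ref{LEM_markov} on $[0,1]$, and that its support $[0, \tfrac{1}{2r^2}]$ is exactly the interval on which this linear lower estimator stays nonnegative.
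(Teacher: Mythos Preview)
Your proof is correct and takes essentially the same approach as the paper: both use Markov brothers' inequality on $[0,1]$ to bound $|p'|\le 2r^2$ and then the mean value theorem to control $p(t)-p(0)$. The only cosmetic difference is that the paper phrases the argument by contradiction, whereas you give the direct version.
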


\begin{proof}
Suppose not. Then there exists $s \in \R_{\geq 0}$ such that $\needlelb{r}(s) > p(s)$. As $p \geq 0$ on $\R_{\ge 0}$, $p(0)=1$  and $\needlelb{r}(t) = 0$ for $t \ge \frac{1}{2r^2}$, we have $0<s < \frac{1}{2r^2}$. We find that
$p(s) - p(0) < \needlelb{r}(s) - 1 =  -2r^2s.$
Now, by the mean value theorem, there exists an element $z \in (0, s)$ such that
$p'(z) = \frac{p(s) - p(0)}{s} < \frac{-2r^2s}{s} = -2r^2$.
But this is in contradiction with Lemma~\ref{LEM_markov}, which implies that $\max_{t \in [0,1]}|p'(t)| \leq 2r^2.$
\qedMP\end{proof}

\begin{cor}\label{COR:Lambda}
Let $h \in (0,1)$, and let $\needle{r}{h}, \halfneedle{r}{h}$ as above. Then $\needlelb{4r}(t) \leq \needle{r}{h}(t) = \needle{r}{h}(-t)$ and $\needlelb{4r}(t) \leq \halfneedle{r}{h}(t)$ for all $t \in [0, 1]$.
\end{cor}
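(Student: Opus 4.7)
The proof is essentially a direct application of Lemma~\ref{LEM_needlevolume} to each of the two needle polynomials, together with the basic properties collected in Theorem~\ref{THM_needle_properties}.

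First I would handle the evenness claim $\needle{r}{h}(t)=\needle{r}{h}(-t)$. This is immediate from the defining formula \eqref{EQ_needle_polynomial}, since $\needle{r}{h}(t)$ depends on $t$ only through $t^2$. Equivalently, both $\Cheby{r}^2(1+h^2-t^2)$ and the normalizing denominator are unchanged under $t\mapsto -t$.

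Next I would verify the hypotheses of Lemma~\ref{LEM_needlevolume} for $p=\needle{r}{h}$, viewed as a univariate polynomial of degree $4r$. By construction $\needle{r}{h}$ is a square, so in particular nonnegative on all of $\R$ (hence on $\R_{\ge 0}$); property \eqref{EQ_needle_0} gives $\needle{r}{h}(0)=1$; and property \eqref{EQ_needle_1} gives $\needle{r}{h}(t)\le 1$ for $t\in[-1,1]$, which in particular covers $t\in[0,1]$. Lemma~\ref{LEM_needlevolume} applied with $r$ replaced by $4r$ then yields $\needlelb{4r}(t)\le\needle{r}{h}(t)$ for all $t\in\R_{\ge 0}$.

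The argument for $\halfneedle{r}{h}$ is identical: it is a square polynomial of degree $4r$, hence nonnegative; $\halfneedle{r}{h}(0)=1$ by \eqref{EQ_12needle_0}; and $\halfneedle{r}{h}(t)\le 1$ for $t\in[0,1]$ by \eqref{EQ_12needle_1}. A second invocation of Lemma~\ref{LEM_needlevolume} with degree parameter $4r$ gives $\needlelb{4r}(t)\le\halfneedle{r}{h}(t)$ on $\R_{\ge 0}$, in particular on $[0,1]$. There is no real obstacle here — the only mildly subtle point is to remember that the needle polynomials have degree $4r$ (not $r$), which is precisely why the conclusion involves $\needlelb{4r}$ rather than $\needlelb{r}$.
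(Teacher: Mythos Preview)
Your proof is correct and is exactly the intended argument: the paper states the corollary without proof, as it is an immediate consequence of Lemma~\ref{LEM_needlevolume} applied to the degree-$4r$ square polynomials $\needle{r}{h}$ and $\halfneedle{r}{h}$, whose required properties are supplied by Theorem~\ref{THM_needle_properties}. Your observation about the evenness coming straight from the definition, and your care in tracking the degree $4r$, are precisely the points one needs.
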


\subsection{Compact sets satisfying Assumption \ref{ASSU_one}}\label{SEC:THM:ASSU}

In this section we prove  Theorem~\ref{THM_bound_interior_cone}. Recall  we assume that $K$ satisfies  Assumption~\ref{ASSU_one} with constants $\assuoneepsilon$ and $\assuoneeta$. We also assume that $0\in \partial K$ is a global minimizer of $f$ over $K$, $f(0)=0$, and $K\subseteq B^n$, so that $\epsilon_K<1$.
By Lemma~\ref{LEM_Lipschitzupperestimator}, we have 
$\mainfunc(x) \leq_0 \gradConst{\mainset}{\mainfunc} \|x\|$ on $\mainset$. Hence, in view of  Lemma~\ref{LEM_upperestimator},   it suffices to find a polynomial $\Laspolye{r} \in \Sigma_{2r}$ for each $r \in \N$ such that $\volu{\mainset}{\Laspolye{r}} = 1$ 
and
\begin{equation}
    \int_\mainset \Laspolye{r}(x) \|x\| dx = \bigO\bigg( \frac{\log r}{r} \bigg).
\end{equation}
The idea is to set $\Laspolye{r}(x) \sim \absneedle{r}{h}(x) := \needle{r}{h}(\|x\|)$ and then  select carefully the constant $h = h(r)$.
The main technical component of the proof is the following lemma, which bounds the normalized integral $\int_\mainset \absneedle{r}{h}(x)\|x\|^\beta dx$ in terms of $r, h$ and $\beta \geq 1$. For Theorem~\ref{THM_bound_interior_cone} we only need the case $\beta=1$, but allowing $\beta\ge 1$ permits to show a sharper convergence rate when the polynomial $f$ has special properties at the minimizer (see Theorem~\ref{THM_fasterconvergence}).

\begin{lem}
\label{LEM_interior_cone_technical}
Let $r \in \N$ and $h \in (0,1)$ with $\assuoneepsilon \geq h \geq {1}/{64r^2}$. Let $\beta \geq 1$. Then
\begin{equation}
\label{EQ_interior_cone_technical}
    \frac{1}{\volu{\mainset}{\absneedle{r}{h}}} \int_\mainset \absneedle{r}{h}(x) \|x\|^\beta dx \leq h^\beta + C r^{2n} e^{-{1\over 2}hr},
\end{equation}
where $C > 0$ is a constant depending only on $\mainset$.
\end{lem}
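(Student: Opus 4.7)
The plan is to split the integral in the numerator at the threshold $\|x\| = h$, use the small-value estimate \eqref{EQ_needle_small} on the outer region, and use the lower estimator $\needlelb{4r}$ from Corollary~\ref{COR:Lambda} together with Assumption~\ref{ASSU_one} to lower-bound the denominator. Since $\absneedle{r}{h}(x) = \needle{r}{h}(\|x\|)$ and $K\subseteq B^n$, this reduces the claim to one-dimensional computations on $[0,1]$ and a single volume estimate on $K$.

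For the numerator, I split
\[
\int_{\mainset} \absneedle{r}{h}(x)\|x\|^\beta dx = \int_{\mainset \cap \ball{h}{0}} \absneedle{r}{h}(x)\|x\|^\beta dx + \int_{\mainset \setminus \ball{h}{0}} \absneedle{r}{h}(x)\|x\|^\beta dx.
\]
On $\mainset \cap \ball{h}{0}$ one has $\|x\|^\beta \leq h^\beta$, so this piece is at most $h^\beta \int_{\mainset}\absneedle{r}{h}(x)dx$ and contributes exactly the term $h^\beta$ to the quotient in \eqref{EQ_interior_cone_technical}. On $\mainset \setminus \ball{h}{0}$ one has $\|x\|\ge h$, so \eqref{EQ_needle_small} gives $\absneedle{r}{h}(x) \leq 4e^{-hr/2}$; combined with $\|x\|^\beta \leq 1$ (since $K\subseteq B^n$), this part is at most $4\vol(K)\, e^{-hr/2}$.

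For the denominator I exploit that $\needle{r}{h}$ has degree $4r$ and satisfies all hypotheses of Lemma~\ref{LEM_needlevolume} (it is a square, $\needle{r}{h}(0)=1$, and $\needle{r}{h}(t)\in[0,1]$ on $[-1,1]$), so Corollary~\ref{COR:Lambda} yields $\needle{r}{h}(t)\ge \needlelb{4r}(t)\ge \tfrac12$ for $t \in [0, 1/(64r^2)]$. Set $\delta := 1/(64r^2)$; by assumption $\delta \leq h \leq \epsilon_K$, so Assumption~\ref{ASSU_one} applies at scale $\delta$ around $0\in K$ and gives
\[
\int_{\mainset} \absneedle{r}{h}(x)\, dx \;\geq\; \int_{\mainset \cap \ball{\delta}{0}} \tfrac{1}{2}\, dx \;\geq\; \tfrac{1}{2}\, \delta^n\, \eta_K\, \vol(B^n) \;=\; \frac{\eta_K \vol(B^n)}{2\cdot 64^n}\cdot \frac{1}{r^{2n}}.
\]
Combining the upper bound on the outer piece with this lower bound on the denominator yields the second term $C r^{2n} e^{-hr/2}$ in \eqref{EQ_interior_cone_technical}, with $C := 8\cdot 64^n \vol(K)/(\eta_K \vol(B^n))$ depending only on $K$.

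The only slightly delicate step is the denominator bound: it is crucial that Assumption~\ref{ASSU_one} be applied at the radius $\delta = 1/(64r^2)$ rather than at $h$, because the lower estimator $\needlelb{4r}$ is only guaranteed to stay bounded away from $0$ on an interval of length $\Theta(1/r^2)$, independent of $h$. The hypothesis $h \ge 1/(64r^2)$ is used precisely to ensure $\delta \leq h$, and the hypothesis $h \leq \epsilon_K$ is used to apply Assumption~\ref{ASSU_one} (via $\delta \leq \epsilon_K$). Everything else is a straightforward assembly of the pointwise bounds already available from Theorem~\ref{THM_needle_properties} and Corollary~\ref{COR:Lambda}.
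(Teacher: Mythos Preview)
Your proof is correct and follows essentially the same approach as the paper: the same split of the numerator at $\|x\|=h$, the same use of \eqref{EQ_needle_small} on the outer piece, and the same lower bound on the denominator via Corollary~\ref{COR:Lambda} and Assumption~\ref{ASSU_one} at radius $\rho=1/(64r^2)$. The only cosmetic difference is that you bound the outer piece by $4\vol(K)e^{-hr/2}$ rather than $4\vol(B^n)e^{-hr/2}$, leading to a constant $C$ that differs from the paper's by the harmless factor $\vol(K)/\vol(B^n)\le 1$.
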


\begin{proof}
Set $\rho = 1/ 64r^2$, so that $\rho\le h\le \epsilon_K$. We define the sets
\begin{equation}\label{EQ:ball}
    \boxl := \ball{h}{0} \cap K \text{ and } \boxs := \ball{\rho}{0} \cap K \subseteq \boxl.
\end{equation}
Note that $\vol(\boxl) \geq \vol(\boxs) \geq \assuoneeta \rho^n \vol (B^n)$ by Assumption~\ref{ASSU_one}. For $x\in \boxl$, we have the   bounds $\absneedle{r}{h}(x) \leq 1$ (by \eqref{EQ_needle_1}, since $\|x\|\le 1$ as $K\subseteq B^n$) and  $\|x\|^\beta \leq h^\beta$. On the other hand,  for $x\in \mainset \setminus \boxl$, we have the  bound $\|x\|^\beta \leq 1$, but now $\absneedle{r}{h}(x)$ is exponentially small (by \eqref{EQ_needle_small}).
We exploit this for bounding the integral in \eqref{EQ_interior_cone_technical}:
\begin{align*}
    \nonumber \int_\mainset \absneedle{r}{h}(x) \|x\|^\beta dx 
    &=   \int_{\boxl} \absneedle{r}{h}(x)\|x\|^\beta dx + \int_{\mainset \setminus \boxl} \absneedle{r}{h}(x)\|x\|^\beta dx \\
    &\leq  h^\beta \int_{\boxl} \absneedle{r}{h}(x)  dx + \int_{\mainset \setminus \boxl} \absneedle{r}{h}(x)dx.   
\end{align*}
Combining with the following lower bound on the denominator:
$$ \int_K \absneedle{r}{h}(x) dx\ge \int_{B_h} \absneedle{r}{h}(x) dx \ge  \int_{B_\rho} \absneedle{r}{h}(x) dx,$$
we get
\begin{equation*}  \label{EQ_ALIGN_interior_cone_technical}
      \frac{1}{\volu{\mainset}{\absneedle{r}{h}}} \int_\mainset \absneedle{r}{h}(x) \|x\|^\beta dx  
 \ \le \ 
  h^\beta+     \frac{\int_{\mainset\setminus B_h}  \absneedle{r}{h}(x) dx}   {\volu{ B_\rho}{\absneedle{r}{h}}}.
\end{equation*}
It remains to upper bound the last term in the above expression. 
By \eqref{EQ_needle_small} we have $ \absneedle{r}{h}(x) \leq 4e^{-{1\over 2}hr}$ for any $x\in K\setminus B_h$ and so  
\begin{equation}
\volu{\mainset \setminus \boxl}{\absneedle{r}{h}} \leq 4e^{-{1\over 2}hr} \cdot \vol(K \setminus \boxl) \leq 4e^{-{1\over 2}hr} \cdot \vol (B^n).
\end{equation}
Furthermore, by Lemma~\ref{LEM_needlevolume}, we have $ \absneedle{r}{h}(x) \ge \needlelb{4r}(\|x\|) =1- 32 r^2\|x\|\ge {1\over 2}$ for all $x \in B_\rho$.
Using Assumption~\ref{ASSU_one} we obtain
\begin{equation}
    \label{EQ_hballintegrallowerbound}
   \int_{B_{\rho}} {\absneedle{r}{h}} (x)dx
    \geq \frac{1}{2} \vol(\boxs) \geq \frac{1}{2}\assuoneeta\rho^n \vol(B^n)= {\eta_K \vol(B^n)\over 2\cdot 64^n r^{2n}}.
\end{equation}
Putting things together yields
\begin{equation}
\frac{\volu{\mainset \setminus \boxl}{\absneedle{r}{h}}}{\volu{\boxl}{\absneedle{r}{h}}} 
\leq 4e^{-{1\over 2}hr} \cdot \vol (B^n) {2 \cdot 64^n r^{2n} \over \eta_K \vol(B^n)}
= {8\cdot 64^n \over \eta_K} r^{2n} e^{-{1\over 2} hr}.
\end{equation}
This shows the lemma with the constant $C={8\cdot 64^n\over \eta_K}$.
\qedMP \end{proof}
It remains to choose $h = h(r)$ to obtain the polynomials $\Laspolye{r}$. Our choice here is essentially the same as the one used in \cite{Kroo2015, Sendov}.
With the next result (applied with $\beta = 1$) the proof of Theorem~\ref{THM_bound_interior_cone} is now complete.

\begin{prop}
\label{PROP_bound_interior_cone_univariate}
For $r\in \N$ and $\beta\ge 1$, set $h(r) =2(2n+\beta)\log r/r$ 
and  define the polynomial $\Laspolye{r} := \absneedle{r}{h(r)} / \volu{\mainset}{\absneedle{r}{h(r)}}$. Then $q_r$  is a sum-of-squares polynomial of degree $4r$ with $\volu{\mainset}{\Laspolye{r}} = 1$ and 
\begin{equation}
    \int_\mainset \Laspolye{r}(x)\|x\|^\beta dx = \bigO\bigg(\frac{\log^\beta r}{r^\beta}\bigg).
\end{equation}
\end{prop}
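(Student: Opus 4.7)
The proposition is essentially a direct bookkeeping application of Lemma~\ref{LEM_interior_cone_technical}, so the plan is short: verify that $q_r$ is a valid candidate density, check that the chosen $h(r)$ lies in the admissible window, and then balance the two terms appearing in the upper bound of \eqref{EQ_interior_cone_technical}.

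\textbf{Step 1: $q_r$ is an admissible SOS density of degree $4r$.} The univariate polynomial $\lambda_r^h$ from \eqref{EQ_needle_polynomial} is a square of a polynomial of degree $2r$ in $t^2$, hence $\Lambda_r^h(x)=\lambda_r^h(\|x\|)=T_r^2(1+h^2-\|x\|^2)/T_r^2(1+h^2)$ is a square of an $n$-variate polynomial of degree $2r$ in $x$. Thus $\Lambda_r^{h(r)}\in\Sigma_{2r}$, and the positive normalization constant $\volu{K}{\Lambda_r^{h(r)}}$ (finite and nonzero, by continuity of $\Lambda_r^{h(r)}$ and the fact that $\Lambda_r^{h(r)}(0)=1>0$) keeps $q_r$ in $\Sigma_{2r}$, so it has degree at most $4r$ and $\int_K q_r(x)\,dx=1$ by construction.

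\textbf{Step 2: Admissibility of $h(r)$.} With $h(r)=2(2n+\beta)\log r/r$, we have $h(r)\to 0$, so $h(r)\le\epsilon_K$ for all $r$ large enough. On the other hand $h(r)r=2(2n+\beta)\log r$, so $h(r)\ge 1/(64r^2)$ is automatic as soon as $r$ exceeds a fixed threshold. Therefore Lemma~\ref{LEM_interior_cone_technical} applies for all sufficiently large $r$, yielding
\begin{equation*}
\int_K q_r(x)\|x\|^\beta\,dx \ \le\ h(r)^\beta + C\, r^{2n}\, e^{-h(r)r/2}.
\end{equation*}

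\textbf{Step 3: Balancing the two terms.} The first term is $h(r)^\beta = (2(2n+\beta))^\beta (\log r)^\beta/r^\beta = O(\log^\beta r/r^\beta)$. For the second term, the exponent is designed precisely to kill the $r^{2n}$ factor with a bit to spare:
\begin{equation*}
r^{2n}e^{-h(r)r/2} \ =\ r^{2n}e^{-(2n+\beta)\log r}\ =\ r^{-\beta}\ \le\ \frac{\log^\beta r}{r^\beta}
\end{equation*}
for $r\ge e$. Adding the two bounds gives $\int_K q_r(x)\|x\|^\beta dx = O(\log^\beta r/r^\beta)$, as required. There is no real obstacle: all the analytic work (the non-trivial lower bound on the denominator via Assumption~\ref{ASSU_one} and the exponentially small tail via \eqref{EQ_needle_small}) has been done inside Lemma~\ref{LEM_interior_cone_technical}; here one just has to pick $h(r)$ so that the exponential beats the polynomial prefactor $r^{2n}$ while still going to $0$ no faster than $\log r/r$, and the choice $h(r)=2(2n+\beta)\log r/r$ is tailored exactly for this.
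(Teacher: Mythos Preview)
Your proof is correct and follows essentially the same approach as the paper: verify that $h(r)$ eventually lies in the admissible window of Lemma~\ref{LEM_interior_cone_technical}, then plug in and observe that $r^{2n}e^{-h(r)r/2}=r^{-\beta}$ while $h(r)^\beta=O(\log^\beta r/r^\beta)$. Your Step~1 spelling out why $\Lambda_r^{h(r)}$ is an SOS polynomial of degree $4r$ is a useful addition that the paper leaves implicit.
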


\begin{proof}
For $r$ sufficiently large, we have $h(r) < \assuoneepsilon$ and $h(r) \geq {1}/{64r^2}$ and so we may use Lemma~\ref{LEM_interior_cone_technical} to obtain
\begin{align}
\int_\mainset \Laspolye{r}(x) \|x\|^\beta dx &\leq h(r)^\beta + Cr^{2n} e^{-{1\over 2}h(r)r} \\ 
&= \bigg(2(2n+\beta)\frac{ \log r} {r}\bigg)^\beta + {C\over r^\beta}
= \bigO\bigg(\frac{\log^\beta r}{r^\beta}\bigg).
\end{align}
\qedMP\end{proof}

\subsection{Convex bodies}\label{SEC:THM:BODY}

We now prove Theorem~\ref{THM_bound_convex_body}. Here, $\mainset$ is assumed to be a convex body, hence it still satisfies Assumption~\ref{ASSU_one} for certain constants $\assuoneepsilon, \assuoneeta$. As before we also assume that $0\in \partial K$ is a global minimizer of $f$ in $K$, $f(0)=0$  and $K\subseteq B^n$.

If $\nabla \mainfunc (0) = 0$, then in view of Taylor's theorem (Theorem~\ref{THM_Taylor}) we know that $f(x)\le_0 \gamma_{K,f}\|x\|^2$ on $K$.  
Hence we may  apply Proposition~\ref{PROP_bound_interior_cone_univariate} (with $\beta=2$) to this quadratic upper estimator of $f$  to obtain $\ubError{r}{}{\mainfunc} = \bigO(\log^2 r / r^2)$ (recall Lemma~\ref{LEM_upperestimator}).

In the rest of this section, we will therefore assume that $\nabla \mainfunc(0) \neq 0$. In this case, we cannot get a better upper estimator than $f(x) \leq_0 \beta_{K,f}\|x\|$ on $K$, and so the choice of $\Laspolye{r}$ in Proposition~\ref{PROP_bound_interior_cone_univariate} is not sufficient. Instead we will need to make use of the sharper $\frac{1}{2}$-needles $\halfneedle{r}{h}$. We will show how to do this in the univariate case first.

\medskip
\noindent
\textbf{The univariate case.}
If $\mainset \subseteq [-1,1]$ is  convex  with $0$ on its boundary, we may assume w.l.o.g. that $\mainset = [0, b]$ for some $b \in (0, 1]$ (in which case we may choose $\epsilon_K=b$).
 By using the $\frac{1}{2}$-needle $\halfneedle{r}{h}$  instead of the regular needle $\needle{r}{h}$, we immediately get the following analog of Lemma~\ref{LEM_interior_cone_technical}.

\begin{lem}
\label{LEM_convex_body_technical_univariate}
Let $b \in (0, 1]$ and $\mainset = [0, b]$. Let $r \in \N$ and $h \in (0,1)$ with $b \geq h \geq \frac{1}{64r^2}$. Then we have
\begin{equation}
\label{EQ_convex_body_technical_univariate}
    \frac{1}{\volu{\mainset}{\halfneedle{r}{h}}} \int_\mainset \halfneedle{r}{h}(x) |x| dx \leq h + C r^{2} e^{-{1\over 2}\sqrt{h}r},
\end{equation}
where $C > 0$ is a universal constant.
\end{lem}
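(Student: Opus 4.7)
The proof will mirror the structure of the proof of Lemma~\ref{LEM_interior_cone_technical} almost verbatim, but using the $\frac{1}{2}$-needle polynomial $\halfneedle{r}{h}$ in place of the absolute-valued needle $\absneedle{r}{h}$, and exploiting the fact that in the univariate situation the reference point is the left endpoint $0$ of the interval $[0,b]$.

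The plan is as follows. First, set $\rho := 1/(64 r^2)$, so that (using $h \geq \rho$) the interval $[0,\rho]$ sits inside the ``peak'' region $[0,h]$ of the $\frac{1}{2}$-needle. Split the numerator as
\begin{equation*}
\int_0^b \halfneedle{r}{h}(x)\, x\, dx \;=\; \int_0^h \halfneedle{r}{h}(x)\, x\, dx \;+\; \int_h^b \halfneedle{r}{h}(x)\, x\, dx.
\end{equation*}
On $[0,h]$ bound $x \leq h$ and $\halfneedle{r}{h}(x) \leq 1$ via \eqref{EQ_12needle_1}. On $[h,b]$ bound $x \leq 1$ and $\halfneedle{r}{h}(x) \leq 4 e^{-\frac{1}{2} r \sqrt{h}}$ via \eqref{EQ_12needle_small}. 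This yields
\begin{equation*}
\int_0^b \halfneedle{r}{h}(x)\, x\, dx \;\leq\; h \int_0^b \halfneedle{r}{h}(x)\, dx \;+\; 4\, e^{-\frac{1}{2} r \sqrt{h}}\, (b - h),
\end{equation*}
which, upon dividing by $\int_0^b \halfneedle{r}{h}(x)\, dx$ and using $b \leq 1$, gives
\begin{equation*}
\frac{1}{\volu{K}{\halfneedle{r}{h}}} \int_K \halfneedle{r}{h}(x)\, x\, dx \;\leq\; h \;+\; \frac{4\, e^{-\frac{1}{2} r \sqrt{h}}}{\volu{K}{\halfneedle{r}{h}}}.
\end{equation*}

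Next, lower-bound the denominator using Corollary~\ref{COR:Lambda}. Since $\halfneedle{r}{h}$ has degree $4r$, the corollary gives $\needlelb{4r}(x) \leq \halfneedle{r}{h}(x)$ on $[0,1]$, and by definition $\needlelb{4r}(x) = 1 - 32 r^2 x \geq \tfrac{1}{2}$ for $x \in [0, \rho] = [0, 1/(64 r^2)]$. Because $\rho \leq h \leq b$, the interval $[0,\rho]$ is contained in $K$, so
\begin{equation*}
\volu{K}{\halfneedle{r}{h}} \;\geq\; \int_0^\rho \halfneedle{r}{h}(x)\, dx \;\geq\; \tfrac{1}{2}\, \rho \;=\; \frac{1}{128\, r^2}.
\end{equation*}

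Finally, combine the two estimates to conclude
\begin{equation*}
\frac{1}{\volu{K}{\halfneedle{r}{h}}} \int_K \halfneedle{r}{h}(x)\, x\, dx \;\leq\; h \;+\; 512\, r^2\, e^{-\frac{1}{2} r \sqrt{h}},
\end{equation*}
which establishes the lemma with the universal constant $C = 512$. Nothing about this argument poses a real obstacle: the whole proof is a faithful univariate transcription of Lemma~\ref{LEM_interior_cone_technical}, with the only substantive change being the appearance of $\sqrt{h}$ (instead of $h$) in the exponent, which is exactly the improved tail decay provided by \eqref{EQ_12needle_small} for the $\frac{1}{2}$-needle.
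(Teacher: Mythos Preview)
Your proof is correct and follows exactly the approach the paper intends: the paper's own proof simply reads ``Same proof as for Lemma~\ref{LEM_interior_cone_technical}, using now the fact that $\halfneedle{r}{h}(x) \leq 1$ on $K$ and $\halfneedle{r}{h}(x) \leq 4e^{-{1\over 2}\sqrt{h}r}$ on $K\setminus B_h$ from \eqref{EQ_12needle_1} and \eqref{EQ_12needle_small},'' and what you have written is precisely this transcription, with the explicit universal constant $C=512$.
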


\begin{proof}
Same proof as for Lemma~\ref{LEM_interior_cone_technical}, using now the fact that $\halfneedle{r}{h}(x) \leq 1$ on $K$ and $ \halfneedle{r}{h}(x) \leq 4e^{-{1\over 2}\sqrt{h}r}$ on $K\setminus B_h$ from \eqref{EQ_12needle_1} and  \eqref{EQ_12needle_small}.
\qedMP\end{proof}

Since the exponent in \eqref{EQ_convex_body_technical_univariate} now contains the term `$\sqrt{h}$' instead of  `$h$', we may square our previous choice of $h(r)$ in Proposition~\ref{PROP_bound_interior_cone_univariate} to obtain the following result.

\begin{prop}
\label{PROP_bound_convex_body_univariate}
Assume $K = [0, b]$. Set $h(r) = \big(2{\log (r^4)\over r}\big)^2= \big(8{\log r\over r}\big)^2$ and define the polynomial $\Laspolye{r} := {\halfneedle{r}{h(r)}}/{\volu{\mainset}{\halfneedle{r}{h(r)}}}$.
Then $q_r$  is a sum-of-squares polynomial of degree $4r$ satisfying  $\volu{\mainset}{\Laspolye{r}} = 1$ and 
\begin{equation}
    \int_\mainset \Laspolye{r}(x) x dx = \bigO\bigg(\frac{\log^2 r}{r^2}\bigg).
\end{equation}
\end{prop}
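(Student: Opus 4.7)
The proposition is a direct application of Lemma~\ref{LEM_convex_body_technical_univariate}, with the parameter $h = h(r)$ chosen so that the two terms on the right-hand side of \eqref{EQ_convex_body_technical_univariate} are simultaneously small. My plan is first to dispatch the structural claims and then to plug the chosen $h(r)$ into the lemma.

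For the first part, observe that by definition $\halfneedle{r}{h} = \Cheby{2r}^2(\cdot)\cdot \Cheby{2r}^{-2}\big(\tfrac{2+h}{2-h}\big)$ is the square of the polynomial $\Cheby{2r}\big(\tfrac{2+h-2t}{2-h}\big)/\Cheby{2r}\big(\tfrac{2+h}{2-h}\big)$, which has degree $2r$ in $t$. Hence $\halfneedle{r}{h}\in \Sigma_{2r}$, so it has degree $4r$ and is a sum of squares. The same properties are preserved after dividing by the positive constant $\int_K \halfneedle{r}{h(r)}(x)\,dx$, and the normalization ensures $\int_K q_r(x)\,dx=1$.

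For the bound on $\int_K q_r(x) x\, dx$, I need to check that $h(r) = (8\log r / r)^2$ satisfies the hypotheses $b \ge h(r) \ge 1/(64r^2)$ of Lemma~\ref{LEM_convex_body_technical_univariate} for $r$ sufficiently large. The upper bound $h(r)\le b$ holds for all large $r$ since $h(r)\to 0$ as $r\to\infty$. The lower bound $h(r)\ge 1/(64r^2)$ reduces to $(8\log r)^2 \ge 1/64$, i.e.\ $\log r \ge 1/64$, which holds for all $r\ge 2$. Hence for $r$ large enough we may apply Lemma~\ref{LEM_convex_body_technical_univariate} to obtain
\begin{equation*}
\int_K q_r(x)\, x\, dx \ \leq\ h(r) + Cr^2 e^{-\tfrac{1}{2}\sqrt{h(r)}\, r}.
\end{equation*}

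The final step is to observe that the exponent simplifies favourably: $\tfrac{1}{2}\sqrt{h(r)}\,r = \tfrac{1}{2}\cdot \tfrac{8\log r}{r}\cdot r = 4\log r$, so $e^{-\tfrac{1}{2}\sqrt{h(r)}\,r}= r^{-4}$, giving $Cr^2 e^{-\tfrac{1}{2}\sqrt{h(r)}\,r} = C/r^2$. Combining this with $h(r) = 64(\log r)^2/r^2$ yields
\begin{equation*}
\int_K q_r(x)\, x\, dx \ \leq\ \frac{64\log^2 r}{r^2} + \frac{C}{r^2} \ =\ \bigO\!\bigg(\frac{\log^2 r}{r^2}\bigg),
\end{equation*}
which is the claimed estimate. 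There is no real obstacle here beyond the calibration of $h(r)$: compared to the choice $h(r)\sim \log r / r$ used in Proposition~\ref{PROP_bound_interior_cone_univariate}, the sharper decay $e^{-\sqrt{h}r/2}$ afforded by the $\tfrac{1}{2}$-needle allows us to square the value of $h(r)$, which is precisely what produces the improved $\bigO(\log^2 r / r^2)$ rate.
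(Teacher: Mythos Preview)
Your proof is correct and follows exactly the same approach as the paper: verify the hypotheses of Lemma~\ref{LEM_convex_body_technical_univariate} for large $r$, apply it, and observe that $\tfrac{1}{2}\sqrt{h(r)}\,r = 4\log r$ so the exponential term becomes $C/r^2$. You have simply added a bit more detail on the structural claims (sum-of-squares, degree, normalization) than the paper bothers to spell out.
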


\begin{proof}
For $r$ sufficiently large, we have $h(r) < b$ and $h(r) \geq {1}/{64r^2}$ and so we may use Lemma~\ref{LEM_convex_body_technical_univariate} to obtain
\begin{align}
\int_\mainset \Laspolye{r}(x) x dx \leq h(r) + Cr^{2} e^{-{1\over 2} r\sqrt{h(r)}} 
= \bigg(8{\log r\over r}\bigg)^2+ {C\over r^2}
= \bigO\bigg(\frac{\log^2 r}{r^2}\bigg).
\end{align}
\qedMP\end{proof}
Since $f(x)\leq_0 \beta_{K,f}\cdot x$ on $K$ we obtain
$\ubError{r}{}{\mainfunc}=\bigO((\log r/r)^2)$, the desired result.

\medskip\noindent
\textbf{The multivariate case.} 
Let $v := \nabla \mainfunc (0) / \|\nabla \mainfunc (0)\|$ and let $w_1, w_2, \dots w_{n-1}$ be an orthonormal basis of $v^\perp$. Then
\begin{equation}\label{eqU}
U = U(\mainfunc) := \{v, w_1, w_2, \dots, w_{n-1}\}
\end{equation}
is an orthonormal basis, which we will use as basis of $\R^n$.

 The basic idea of the proof is as follows. For any $j\in [n-1]$, if we minimize $f$ in the direction of $w_j$ then  we minimize the univariate polynomial $\tilde f(t)=f(tw_j)$, which satisfies: $\tilde f'(0)= \langle \nabla f(0), w_j\rangle=0$. Hence, by Taylor's theorem, there is a quadratic upper estimator when minimizing in the direction $w_j$, so that using a regular needle polynomial will suffice for the analysis.
On the other hand, if we minimize $f$ in the direction $v$, then $\min_{tv\in K}f(tv)=\min_{t\in [0,1]} f(tv)$,  since $K\subseteq B^n$ and $v\in N_K(0)$. As explained above this univariate minimization problem can be dealt with using  $\frac{1}{2}$-needle polynomials to get the desired convergence rate.
This motivates defining the following sum-of-squares polynomials.
\begin{defn}\label{DEF:multineedle}
For $r \in \N, h \in (0, 1)$ we define the polynomial $\multineedle{r}{h} \in \Sigma_{2nr}$ by
\begin{equation}
	\multineedle{r}{h}(x) = \halfneedle{r}{h^2}(\langle x, v \rangle ) \cdot \prod_{j = 1}^{n-1} \needle{r}{h}(\langle x, w_j \rangle).
\end{equation}
\end{defn}
This construction is similar to the one used by Kro\'o in \cite{Kroo2015} to obtain sharp multivariate needle polynomials at boundary points of $\mainset$.
\begin{prop}
\label{PROP_multineedle}
We have    $\multineedle{r}{h}(0)=1$ and 
\begin{align}
    \multineedle{r}{h}(x) &\in [0,1] &&\text{for } x \in \mainset, \\
    \label{EQ_needlesmall_boxsmall}
    \multineedle{r}{h}(x) &\leq 4 e^{-{1\over 2}hr} &&\text{for } x \in \mainset \text{ with } \langle x, v \rangle \geq h^2, \\ 
    \label{EQ_needlesmall_boxlarge}
    \multineedle{r}{h}(x) &\leq 4 e^{-{1\over 2}hr} &&\text{for } x \in \mainset \text{ with } \max_{j \in [n-1]} |\langle x, w_j \rangle| \geq h.
\end{align}
\end{prop}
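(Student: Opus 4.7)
The plan is to reduce each of the four claims directly to the one-variable properties collected in Theorem~\ref{THM_needle_properties} applied to the two factor types $\needle{r}{h}$ and $\halfneedle{r}{h^2}$. The only real preparatory work is to pin down the ranges of the arguments $\langle x,v\rangle$ and $\langle x,w_j\rangle$ when $x\in K$, so that the theorem is applicable. Since $K\subseteq B^n$ and $\{v,w_1,\dots,w_{n-1}\}$ is orthonormal, for every $x\in K$ we have $\|x\|\le 1$, which gives $|\langle x,v\rangle|\le 1$ and $|\langle x,w_j\rangle|\le 1$ for each $j$; so the arguments of the $\needle{r}{h}$ factors sit in $[-1,1]$, as required.

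The critical additional point is that the argument of $\halfneedle{r}{h^2}$ must lie in $[0,1]$ (not just $[-1,1]$), because Theorem~\ref{THM_needle_properties} only supplies the estimates \eqref{EQ_12needle_1}--\eqref{EQ_12needle_small} on that domain. For this I would invoke Lemma~\ref{LEM_first_order_condition}: since $0$ is a global minimizer of $f$ on $K$, we have $\nabla f(0)\in N_K(0)$, and hence $v=\nabla f(0)/\|\nabla f(0)\|\in N_K(0)$. By definition of the normal cone this yields $\langle x,v\rangle =\langle v,x-0\rangle \ge 0$ for every $x\in K$, so combined with the bound above, $\langle x,v\rangle\in [0,1]$ for $x\in K$. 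I consider this the main (minor) obstacle — everything else is bookkeeping.

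With these ranges in hand, the four claims follow mechanically. For $\multineedle{r}{h}(0)=1$, combine $\needle{r}{h}(0)=1$ from \eqref{EQ_needle_0} with $\halfneedle{r}{h^2}(0)=1$ from \eqref{EQ_12needle_0}. For the containment in $[0,1]$, apply \eqref{EQ_12needle_1} to the $\halfneedle{r}{h^2}$ factor and \eqref{EQ_needle_1} to each $\needle{r}{h}$ factor; the product of numbers in $[0,1]$ is in $[0,1]$. For \eqref{EQ_needlesmall_boxsmall}, if $\langle x,v\rangle\ge h^2$ then \eqref{EQ_12needle_small} applied at threshold $h^2$ gives
\[
\halfneedle{r}{h^2}(\langle x,v\rangle)\le 4e^{-\tfrac{1}{2}r\sqrt{h^2}}=4e^{-\tfrac{1}{2}rh},
\]
and the remaining factors are each $\le 1$ by the previous step, so the product is $\le 4e^{-rh/2}$. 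For \eqref{EQ_needlesmall_boxlarge}, if some $|\langle x,w_{j_0}\rangle|\ge h$ then \eqref{EQ_needle_small} yields $\needle{r}{h}(\langle x,w_{j_0}\rangle)\le 4e^{-rh/2}$, and again the other factors are bounded by $1$, completing the proof.
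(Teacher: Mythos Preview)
Your proof is correct and follows essentially the same route as the paper's: both reduce each claim to the univariate bounds of Theorem~\ref{THM_needle_properties} after observing that $0\le \langle x,v\rangle\le 1$ and $|\langle x,w_j\rangle|\le 1$ for $x\in K$. The paper's proof is terser, simply asserting $0\le\langle x,v\rangle$ because $v\in N_K(0)$ was noted in the text preceding Definition~\ref{DEF:multineedle}; you make this explicit via Lemma~\ref{LEM_first_order_condition}, which is a helpful clarification rather than a different argument.
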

\begin{proof}
Note that for any $x \in \mainset$ we have $ 0\le \langle x, v \rangle \leq \|x\| \leq 1$ and $|\langle x, w_j \rangle| \leq \|x\| \leq 1$ for $j\in [n-1]$. The required properties then follow immediately from those of the needle and $\frac{1}{2}$-needle polynomials discussed in Theorem~\ref{THM_needle_properties}. 
\qedMP\end{proof}

It remains to formulate and prove an analog of Lemma~\ref{LEM_interior_cone_technical} for the polynomial $\multineedle{r}{h}$. Before we are able to do so, we first need a few technical statements.
For $h > 0$ we define the polytope
\begin{equation}
\multbox{h} := \{x \in \R^n : 0 \leq \langle x, v \rangle \leq h^2, |\langle x, w_j \rangle| \leq h \text{ for all } j \in [n-1] \}.
\end{equation}
 \noindent
Note that for $h \in (0, 1)$, the inequalities \eqref{EQ_needlesmall_boxsmall} and \eqref{EQ_needlesmall_boxlarge} can be summarized as
\begin{equation}    \label{EQ_needlesmall_on_multibox}
	\multineedle{r}{h}(x) \leq 4e^{-{1\over 2}hr} \text{ for } x \in \mainset \setminus \multbox{h},
\end{equation}
which means  $\multineedle{r}{h}(x)$ is exponentially small for $x\in K$ outside of $\multbox{h}$. When instead $x \in K\cap \multbox{h}$, the following two lemmas show that the function value $\mainfunc(x)$ is small.

\begin{lem}
\label{LEM_boxnormbound}
Let $h \in (0,1)$. Then $\|x\| \leq \sqrt{n} h$ for all $x \in  \multbox{h}$.
\end{lem}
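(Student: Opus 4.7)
The plan is to use Parseval's identity with respect to the orthonormal basis $U = \{v, w_1, \ldots, w_{n-1}\}$ introduced in \eqref{eqU}. Since $U$ is an orthonormal basis of $\R^n$, any $x \in \R^n$ satisfies
\begin{equation}
    \|x\|^2 = \langle x, v \rangle^2 + \sum_{j=1}^{n-1} \langle x, w_j \rangle^2.
\end{equation}

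Next I would use the defining inequalities of $\multbox{h}$ together with the assumption $h \in (0,1)$ to bound each term on the right-hand side by $h^2$. For the coordinate along $v$, we have $0 \leq \langle x, v \rangle \leq h^2 \leq h$ (using $h < 1$), so $\langle x, v \rangle^2 \leq h^2$. For each $j \in [n-1]$, $|\langle x, w_j \rangle| \leq h$ directly gives $\langle x, w_j \rangle^2 \leq h^2$.

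Summing these $n$ contributions yields $\|x\|^2 \leq n h^2$, hence $\|x\| \leq \sqrt{n}\, h$, as claimed. There is no substantive obstacle here; the only mild point worth flagging is the use of $h < 1$ to pass from the tighter bound $\langle x, v \rangle \leq h^2$ on the $v$-coordinate to the uniform bound $h$ used for all coordinates.
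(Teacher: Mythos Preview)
Your proof is correct and essentially identical to the paper's own argument: expand $\|x\|^2$ in the orthonormal basis $U$ and bound each of the $n$ coordinates by $h^2$ using the definition of $\multbox{h}$. You even make explicit the one point the paper leaves implicit, namely that $h<1$ is needed to turn $\langle x,v\rangle\le h^2$ into $\langle x,v\rangle^2\le h^2$.
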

\begin{proof}
Let $x \in \multbox{h}$. By expressing $x$ in the orthonormal basis $U$ from \eqref{eqU}, we obtain
\begin{equation}
    \|x\|^2 = \langle x, v \rangle^2 + \sum_{i =1}^{n-1}\langle x, w_i \rangle^2 \leq nh^2,
\end{equation}
using the definition of $\multbox{h}$ for the second inequality.
\qedMP\end{proof}

\begin{lem}
\label{LEM_Taylorbound}
Let $h \in (0, 1)$. Then
$\mainfunc(x) \leq \big(\gradConst{\mainset}{\mainfunc} + n\HessConst{\mainset}{\mainfunc}\big) h^2$ \ for all  $ x \in K\cap \multbox{h}.$
\end{lem}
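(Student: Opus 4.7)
The plan is to combine Taylor's theorem (Theorem~\ref{THM_Taylor}) with the geometric constraints defining $\multbox{h}$ and the choice of the orthonormal basis $U$ from \eqref{eqU}. Since $0$ is a global minimizer of $f$ on $K$ with $f(0)=0$, Taylor's theorem applied at the base point $a=0$ gives, for every $x \in K$,
\begin{equation*}
    f(x) \;\le\; \langle \nabla f(0),\, x\rangle + \HessConst{\mainset}{\mainfunc}\, \|x\|^2.
\end{equation*}
So it suffices to bound the two terms on the right separately for $x \in K \cap \multbox{h}$.

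For the linear term, I would use that $v = \nabla f(0)/\|\nabla f(0)\|$ by construction, so $\langle \nabla f(0), x\rangle = \|\nabla f(0)\| \cdot \langle v, x\rangle \le \gradConst{\mainset}{\mainfunc}\, \langle v, x\rangle$. By the definition of $\multbox{h}$, we have $0 \le \langle v, x\rangle \le h^2$, which yields
\begin{equation*}
    \langle \nabla f(0), x\rangle \;\le\; \gradConst{\mainset}{\mainfunc}\, h^2.
\end{equation*}
For the quadratic term, I would invoke Lemma~\ref{LEM_boxnormbound}, which gives $\|x\|^2 \le n h^2$ for all $x \in \multbox{h}$, hence
\begin{equation*}
    \HessConst{\mainset}{\mainfunc}\, \|x\|^2 \;\le\; n\, \HessConst{\mainset}{\mainfunc}\, h^2.
\end{equation*}
Summing the two estimates gives exactly $f(x) \le \big(\gradConst{\mainset}{\mainfunc} + n\HessConst{\mainset}{\mainfunc}\big) h^2$, as claimed.

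There is essentially no obstacle here: the argument is a short bookkeeping exercise combining Taylor's inequality with the coordinate-wise control of $x$ afforded by membership in $\multbox{h}$ (written in the basis $U$ adapted to $\nabla f(0)$). The only small subtlety worth noting is that the sign $\langle v, x\rangle \ge 0$ (which makes the linear-term estimate a genuine upper bound rather than producing a spurious absolute value) is automatic from the definition of $\multbox{h}$; otherwise it would have followed from Lemma~\ref{LEM_first_order_condition}, since $\nabla f(0) \in N_K(0)$.
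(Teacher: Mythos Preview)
Your proof is correct and follows essentially the same approach as the paper: apply Taylor's theorem at $0$, bound the linear term using $\langle \nabla f(0),x\rangle = \|\nabla f(0)\|\,\langle v,x\rangle \le \gradConst{\mainset}{\mainfunc} h^2$ from the definition of $\multbox{h}$, and bound the quadratic term via Lemma~\ref{LEM_boxnormbound}. The paper compresses this into a single chain of inequalities, but the content is identical.
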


\begin{proof}
Using Taylor's Theorem~\ref{THM_Taylor}, Lemma~\ref{LEM_boxnormbound} and $\langle x,v\rangle \le h^2$ for $x\in \multbox{h}$, we obtain
\begin{align}
    \mainfunc(x) \leq \langle \nabla \mainfunc (0), x \rangle + \HessConst{\mainset}{\mainfunc} \|x\|^2 \leq \|\nabla \mainfunc(0)\| \langle x, v \rangle + n\HessConst{\mainset}{\mainfunc}h^2 \leq \big(\gradConst{\mainset}{\mainfunc} + n\HessConst{\mainset}{\mainfunc}\big) h^2.
\end{align}
\qedMP \end{proof}
We now give a lower  bound on $\volu{\mainset \cap \multbox{h}}{\multineedle{r}{h}}$ (compare to \eqref{EQ_hballintegrallowerbound}). First we need the following bound on $\vol(\mainset \cap \multbox{h})$.

\begin{lem}
\label{LEM_multboxvolume}
Let $h \in (0,1)$. If $h < \assuoneepsilon$ then we have: 
  $ \vol(\mainset \cap \multbox{h}) \geq \assuoneeta h^{2n}\vol(B^n)$.
\end{lem}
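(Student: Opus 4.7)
The plan is to exhibit a ball of radius $h^2$ centred at $0$ whose intersection with $K$ is already contained in $\Pi_h$, and then apply Assumption~\ref{ASSU_one} directly to that ball. The key geometric observation is that, because $0$ is a global minimizer of $f$ on $K$, the half-space condition built into $\Pi_h$ in the $v$-direction comes for free on $K$.

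More concretely, I would proceed in three short steps. First, I would record that by Lemma~\ref{LEM_first_order_condition} we have $\nabla f(0) \in N_K(0)$, and since $v = \nabla f(0)/\|\nabla f(0)\|$ is a positive multiple of $\nabla f(0)$ and $N_K(0)$ is a cone, $v \in N_K(0)$ as well. Unpacking the definition of the normal cone \eqref{EQ_normalconedefinition} (with $a = 0$), this means
\begin{equation*}
\langle v, x \rangle \geq 0 \quad \text{for all } x \in K.
\end{equation*}

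Second, I would show the inclusion $B^n_{h^2}(0) \cap K \subseteq \Pi_h \cap K$. For any $x \in B^n_{h^2}(0)$, expanding in the orthonormal basis $U = \{v, w_1, \ldots, w_{n-1}\}$ gives
\begin{equation*}
\langle x, v\rangle^2 + \sum_{j=1}^{n-1} \langle x, w_j\rangle^2 = \|x\|^2 \leq h^4,
\end{equation*}
so $|\langle x, v\rangle| \leq h^2$ and $|\langle x, w_j\rangle| \leq h^2 \leq h$ for all $j \in [n-1]$ (using $h < 1$). If additionally $x \in K$, then by the first step $\langle x, v\rangle \geq 0$, and hence $0 \leq \langle x, v\rangle \leq h^2$, so $x \in \Pi_h$.

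Third, I would apply Assumption~\ref{ASSU_one} at the point $0 \in K$ with radius $\delta := h^2$. This is allowed because $h^2 < h < \epsilon_K$ (again using $h < 1$), which yields $\vol(B^n_{h^2}(0) \cap K) \geq (h^2)^n \eta_K \vol(B^n) = \eta_K h^{2n} \vol(B^n)$. Combining this with the inclusion from step two gives the claimed bound. I do not anticipate any real obstacle here; the only slightly subtle point is the interplay between the asymmetric dimensions of $\Pi_h$ (length $h^2$ in the $v$-direction versus $h$ in the $w_j$-directions) and the isotropic ball supplied by Assumption~\ref{ASSU_one}, which forces the radius to be chosen as the smaller of the two, namely $h^2$; this is precisely what produces the exponent $2n$ on the right-hand side.
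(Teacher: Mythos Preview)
Your proof is correct and essentially identical to the paper's: both show $B^n_{h^2}(0)\cap K \subseteq \Pi_h$ by expanding in the orthonormal basis $U$ and using $v\in N_K(0)$ to obtain the lower bound $\langle v,x\rangle\ge 0$ on $K$, and then apply Assumption~\ref{ASSU_one} with radius $h^2$. The only cosmetic difference is that the paper phrases the first step via the halfspace $H_v=\{x:\langle v,x\rangle\ge 0\}\supseteq K$, whereas you invoke $v\in N_K(0)$ directly; the content is the same.
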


\begin{proof}
Consider the halfspace $H_v := \{ x \in \R^n : \langle v, x \rangle \geq 0\}$.
As $v \in \normalcone{\mainset}{0}$, we have the inclusion $\mainset \subseteq H_v$. We show that $\ball{h^2}{0} \cap H_v \subseteq \multbox{h}$, implying that $\ball{h^2}{0} \cap \mainset \subseteq \ball{h^2}{0} \cap H_v \subseteq \multbox{h}$.
Let $x \in \ball{h^2}{0} \cap H_v$. By expressing $x$ in the orthonormal basis $U(f)$ from \eqref{eqU}, we get
$\|x\|^2=\langle v,x\rangle^2+\sum_{j=1}^{n-1}\langle w_j,x\rangle^2\le h^4$. Since $x\in H_v$ and $0<h<1$, we get   $0\le \langle v,x\rangle \le h^2$ and
$|\langle w_j,x\rangle |\le h^2\le h$, thus showing $x\in P_h$.
See Figure~\ref{FIG_multbox} for an illustration. 
We may now apply Assumption~\ref{ASSU_one} to find
$\vol(\multbox{h}) \geq \vol(\ball{h^2}{0} \cap \mainset) \geq \assuoneeta h^{2n}\vol(B^n).$
\qedMP
\end{proof}

\begin{figure}
    \centering
    \begin{tikzpicture}[x=5cm,y=5cm]
\begin{scope}
%
%
%
%
%

%
%
%

    \draw[thick] (0,0) -- (-0.4, 0.8) -- (0.4, 0.8) --cycle;
    \draw[opacity=0.1, fill] (0,0) -- (-0.4, 0.8) -- (0.4, 0.8) --cycle;
    
    \draw[dashed, thick] (0,0) -- (-0.6, 0) -- (-0.6, 0.4) -- (0.6, 0.4) -- (0.6, 0) --cycle;

	\begin{scope}
        \clip (0,0) -- (-0.6, 0) -- (-0.6, 0.4) -- (0.6, 0.4) -- (0.6, 0) --cycle;
        \draw[dashed] (0, 0) circle (2cm);
    \end{scope}
	\begin{scope}
        \clip (0,0) -- (-0.4, 0.8) -- (0.4, 0.8) --cycle;
        \draw[fill, opacity = 0.2] (0, 0) circle (2cm);
    \end{scope}

    \draw[] (0.6,0) node[below] {$\multbox{h}$};
    \draw[] (0.4, 0.8) node[below right] {$\mainset$};
    \draw[] (0, 0) node[below] {$0$};
    \draw[] (0.35, 0.2) node[right] {$\ball{h^2}{0}$};

    \draw[->, ultra thick]   (0, 0) -- node[above right] {$v$} (0, 0.25);
    \draw[->, ultra thick]   (0, 0) -- node[below] {$w_1$} (0.25, 0);
    \fill[red] (0,0) circle (2pt);
\end{scope}
\end{tikzpicture}
    \caption{Overview of the situation in the proof of Lemma~\ref{LEM_multboxvolume}. Note that as long as $v \in \normalcone{\mainset}{0}$, the entire region $\ball{h^2}{0} \cap \mainset$ (in dark gray) is contained in $\multbox{h}$.}
    \label{FIG_multbox}
\end{figure}
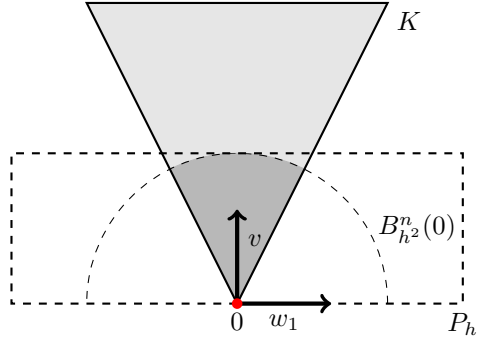

\begin{lem}
\label{LEM_box_polyvolume}
Let $r \in \N, h \in (0, 1)$. Assume that $\assuoneepsilon > h > \rho = {1}/{64r^2}$. Then 
\begin{equation}
    \volu{\mainset \cap \multbox{h}}{\multineedle{r}{h}} \geq \frac{1}{2^n}\assuoneeta \rho^{2n}\vol(B^n).
    \end{equation}
\end{lem}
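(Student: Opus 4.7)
\medskip\noindent
\textbf{Proof plan.} The strategy is to exhibit an explicit subset $S$ of $K\cap \multbox{h}$ on which $\multineedle{r}{h}$ is uniformly bounded below by $1/2^n$, and then to lower bound $\vol(S)$ via Assumption~\ref{ASSU_one}. Concretely, I will take $S:=\ball{\rho^2}{0}\cap K$, i.e., the intersection of $K$ with the small ball of radius $\rho^2$ (rather than $\rho$) around $0$; the sharper radius is exactly what accommodates the thin $v$-slab of $\multbox{h}$ (which has width only $h^2$ in the $v$-direction while having width $h$ in the other directions).

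First I check that $S\subseteq K\cap \multbox{h}$. Let $x\in S$, so that $\|x\|\leq \rho^2$. By Cauchy--Schwarz, $|\langle x,v\rangle|\leq \rho^2\leq h^2$ and $|\langle x,w_j\rangle|\leq \rho^2\leq h$ for all $j\in [n-1]$, where $\rho^2<h^2$ follows from $\rho<h<1$. Moreover, since $0$ is a global minimizer of $f$ on $K$, Lemma~\ref{LEM_first_order_condition} gives $\nabla f(0)\in \normalcone{K}{0}$, and hence $v\in \normalcone{K}{0}$ (as the normal cone is a cone). Consequently $\langle x,v\rangle\geq 0$ for every $x\in K$, and in particular for $x\in S$, so $x\in \multbox{h}$.

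Next, for $x\in S$ each coordinate $\langle x,v\rangle$ and $\langle x,w_j\rangle$ has absolute value at most $\rho=1/(64r^2)$ because $\rho^2\leq \rho$. A direct computation with $\needlelb{4r}(t)=1-32r^2t$ shows $\needlelb{4r}(t)\geq 1/2$ on $[0,\rho]$, and Corollary~\ref{COR:Lambda} then yields both $\halfneedle{r}{h^2}(\langle x,v\rangle)\geq 1/2$ and $\needle{r}{h}(\langle x,w_j\rangle)\geq 1/2$. Multiplying these bounds gives $\multineedle{r}{h}(x)\geq 1/2^n$ on $S$. Finally, since $\rho^2<\rho<h<\assuoneepsilon$, Assumption~\ref{ASSU_one} applied at $0\in K$ with radius $\rho^2$ yields $\vol(S)\geq \assuoneeta \rho^{2n}\vol(B^n)$. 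Combining the three ingredients,
\[
\volu{K\cap \multbox{h}}{\multineedle{r}{h}}\;\geq\;\int_S \multineedle{r}{h}(x)\,dx\;\geq\;\frac{1}{2^n}\vol(S)\;\geq\;\frac{1}{2^n}\assuoneeta \rho^{2n}\vol(B^n),
\]
as claimed. No step presents a real obstacle: the only nontrivial choice is the radius $\rho^2$, picked so that the ball simultaneously fits inside the $v$-slab of $\multbox{h}$ and keeps every needle coordinate within the "plateau" region where Corollary~\ref{COR:Lambda} still gives a universal constant lower bound.
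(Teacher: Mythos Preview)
Your proof is correct and follows essentially the same approach as the paper. The paper restricts the integral first to $K\cap\multbox{\rho}$, bounds each needle factor there below by $\needlelb{4r}(\rho)=1/2$, and then invokes Lemma~\ref{LEM_multboxvolume} (whose proof in turn shows $\ball{\rho^2}{0}\cap K\subseteq \multbox{\rho}$) to get the volume bound $\assuoneeta\rho^{2n}\vol(B^n)$; you simply go directly to the smaller set $S=\ball{\rho^2}{0}\cap K$ and apply Assumption~\ref{ASSU_one} there, which is a mild streamlining of the same argument.
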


\begin{proof}
The integral $  \volu{\mainset \cap  \multbox{h}}{\multineedle{r}{h}}$ is equal to 
\begingroup
\allowdisplaybreaks
\begin{align}
 &    \int_{\mainset \cap  \multbox{h}} \halfneedle{r}{h^2}(\langle x, v \rangle) \cdot \prod_{i = 1}^{n-1} \needle{r}{h}(\langle x, w_i \rangle) dx 
       & \text{\rm [using Definition \ref{DEF:multineedle}]} \\
&    \geq \int_{\mainset \cap  \multbox{h}} \needlelb{4r}(\langle x, v \rangle) \cdot \prod_{i = 1}^{n-1} \needlelb{4r}(|\langle x, w_i \rangle|)dx 
       &\text{\rm [using Corollary \ref{COR:Lambda}]} \\
&    \geq \int_{\mainset \cap \multbox{\rho}} \needlelb{4r}(\langle x, v \rangle) \cdot \prod_{i = 1}^{n-1} \needlelb{4r}(|\langle x, w_i \rangle|)dx 
        & \text{\rm [since } \multbox{\rho} \subseteq \multbox{h} \text{\rm ]}\\
 &   \geq \int_{\mainset \cap 
        \multbox{\rho}} \needlelb{4r}(\rho) \cdot \prod_{i = 1}^{n-1} \needlelb{4r}(\rho)dx 
       & \text{\rm [since } \Lambda_{4r}(t)\ge \Lambda_{4r}(\rho)\ \text{\rm if } t\in [0,\rho]   \text{\rm ]} \\
&    \geq \int_{\mainset \cap \multbox{\rho}} \frac{1}{2^n} dx 
       = \frac{1}{2^n}  \vol(\mainset \cap \multbox{\rho}) & 
        \text{\rm [as } \Lambda_{4r}(\rho)={1\over 2} \text{]}\\
&     \ge {1\over 2^n} \eta_K \rho^{2n}\vol(B^n) & \text{\rm [using Lemma \ref{LEM_multboxvolume}]}
\end{align}%
\endgroup
\qedMP\end{proof}

We are now able to prove an analog of Lemma~\ref{LEM_interior_cone_technical}.
\begin{lem}
\label{LEM_convex_body_technical_multivariate}
Let $r \in \N$ and  $h \in (0,1)$. If $\assuoneepsilon > h >  {1}/{64r^2}$ then we have
\begin{equation}
\frac{1}{\volu{\mainset}{\multineedle{r}{h}}} \int_\mainset \multineedle{r}{h}(x) \mainfunc(x)  dx \leq (\beta_{K,f}+n\gamma_{K,f}) h^2 +
 C'r^{4n}  e^{-{1\over 2}hr},
  \end{equation}
where $C'$ is a constant depending only on $K$.
\end{lem}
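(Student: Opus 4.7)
The plan is to follow the same template as the proof of Lemma~\ref{LEM_interior_cone_technical}, splitting the integral according to whether $x$ lies in the rectangular box $\multbox{h}$ or outside of it. Inside $\multbox{h}$, the multivariate needle $\multineedle{r}{h}$ is bounded by $1$ and the objective $f$ is small; outside, $f$ can be large but $\multineedle{r}{h}$ is exponentially small. The denominator is then handled by Lemma~\ref{LEM_box_polyvolume}, which replaces the role played by \eqref{EQ_hballintegrallowerbound} in the earlier proof.

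In more detail, I would first write
\[
    \int_{\mainset} \multineedle{r}{h}(x)\mainfunc(x)dx = \int_{\mainset\cap \multbox{h}} \multineedle{r}{h}(x)\mainfunc(x)dx + \int_{\mainset\setminus \multbox{h}} \multineedle{r}{h}(x)\mainfunc(x)dx.
\]
Using Proposition~\ref{PROP_multineedle} (which gives $\multineedle{r}{h}(x) \leq 1$ on $K$) together with Lemma~\ref{LEM_Taylorbound}, the first integral is at most $(\gradConst{\mainset}{\mainfunc}+n\HessConst{\mainset}{\mainfunc}) h^2 \cdot \int_{\mainset\cap \multbox{h}} \multineedle{r}{h}(x)dx \leq (\gradConst{\mainset}{\mainfunc}+n\HessConst{\mainset}{\mainfunc}) h^2 \cdot \volu{\mainset}{\multineedle{r}{h}}$. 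For the second integral, \eqref{EQ_needlesmall_on_multibox} yields $\multineedle{r}{h}(x)\leq 4e^{-hr/2}$ on $\mainset\setminus \multbox{h}$, while $\mainset\subseteq B^n$ forces $\mainfunc(x) \leq \gradConst{\mainset}{\mainfunc}\|x\| \leq \gradConst{\mainset}{\mainfunc}$, giving
\[
    \int_{\mainset\setminus \multbox{h}} \multineedle{r}{h}(x)\mainfunc(x)dx \leq 4\gradConst{\mainset}{\mainfunc}\vol(B^n)\cdot e^{-hr/2}.
\]

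After dividing through by $\volu{\mainset}{\multineedle{r}{h}}$, the first contribution yields the term $(\gradConst{\mainset}{\mainfunc}+n\HessConst{\mainset}{\mainfunc})h^2$ directly, and for the second I apply Lemma~\ref{LEM_box_polyvolume} with $\rho = 1/(64r^2)$ to obtain the lower bound
\[
    \volu{\mainset}{\multineedle{r}{h}} \;\geq\; \volu{\mainset\cap \multbox{h}}{\multineedle{r}{h}} \;\geq\; \tfrac{1}{2^n}\assuoneeta\rho^{2n}\vol(B^n) \;=\; \frac{\assuoneeta\vol(B^n)}{2^n\cdot 64^{2n}\, r^{4n}}.
\]
Combining the two estimates, the ``error'' contribution is at most $C' r^{4n}e^{-hr/2}$ with $C' = 4\gradConst{\mainset}{\mainfunc}\cdot 2^n\cdot 64^{2n}/\assuoneeta$, which depends only on $K$ (and on $\gradConst{\mainset}{\mainfunc}$, absorbed into the constant by the convention of Section~\ref{SEC_Preliminaries}).

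There is no genuine obstacle here once the two preparatory lemmas are in place: the calculation is a direct analog of the one in Lemma~\ref{LEM_interior_cone_technical}, with $B_h$ replaced by the anisotropic box $\multbox{h}$ (reflecting the fact that we are using the sharper $\tfrac{1}{2}$-needle in the direction of $\nabla f(0)$ and regular needles in the orthogonal directions), and with the volume bound $\vol(B_\rho) \gtrsim \rho^n$ replaced by $\vol(\mainset\cap \multbox{\rho}) \gtrsim \rho^{2n}$ from Lemma~\ref{LEM_multboxvolume}. The only technical care required is to verify that the hypothesis $h > 1/(64r^2)$ is exactly what is needed to feed Lemma~\ref{LEM_box_polyvolume}, and that the exponent $r^{4n}$ (rather than $r^{2n}$ as in Lemma~\ref{LEM_interior_cone_technical}) arises naturally from the $\rho^{2n}$ volume bound.
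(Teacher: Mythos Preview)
Your proposal is correct and follows essentially the same argument as the paper: split the integral over $\mainset\cap\multbox{h}$ and $\mainset\setminus\multbox{h}$, bound the first piece via Lemma~\ref{LEM_Taylorbound} and the second via \eqref{EQ_needlesmall_on_multibox}, then control the denominator with Lemma~\ref{LEM_box_polyvolume}. The only cosmetic difference is that the paper bounds $f$ on $\mainset\setminus\multbox{h}$ by $f_{\max,K}$ rather than $\gradConst{\mainset}{\mainfunc}$, yielding $C' = 4\cdot 2^n\cdot 64^{2n} f_{\max,K}/\assuoneeta$ instead of your constant.
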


\begin{proof}
Set $\rho={1}/{64r^2}$.
By Lemma~\ref{LEM_Taylorbound}, $f(x)\le (\beta_{K,f}+n\gamma_{K,f})h^2$ for all $x\in K\cap \multbox{h}$. Moreover, by Proposition~\ref{PROP_multineedle}, we have $\sigma^h_r(x)\le 4e^{-{1\over 2}hr}$ for all $x\in K\setminus \multbox{h}$. 
Hence,
\begin{align}
    \int_\mainset \multineedle{r}{h}(x) \mainfunc(x) dx 
    & =  \int_{\mainset \cap  \multbox{h}} \multineedle{r}{h}(x) \mainfunc(x) dx + \int_{\mainset \setminus  \multbox{h}} \multineedle{r}{h}(x) \mainfunc(x) dx \\
    & \le (\beta_{K,f}+n\gamma_{K,f})h^2 \int_{K\cap \multbox{h}} \sigma^h_r(x)dx + 4 e^{-{1\over 2}hr} f_{\max,K} \vol(B^n),
\end{align}
where $f_{\max,K}=\max_{x\in K}f(x)$.
Combining with 
$$\int_K\sigma^h_r(x)dx\ge \int_{K\cap \multbox{h}}\sigma^h_r(x)dx\ge {1\over 2^n} \eta_K\rho^{2n}\vol(B^n),$$
where we use Lemma~\ref{LEM_box_polyvolume} for the last inequality, we obtain
$$
    \nonumber\frac{1}{\volu{\mainset}{\multineedle{r}{h}}} \int_\mainset \multineedle{r}{h}(x) \mainfunc(x) dx 
\leq
    (\beta_{K,f}+n\gamma_{K,f})h^2 + {4\cdot 2^n \cdot 64^{2n} f_{\max,K}\over \eta_K }r^{4n} e^{-{1\over 2}hr}.
$$
This shows the lemma, with the constant $C'= {4\cdot 2^n \cdot 64^{2n} f_{\max,K}\over \eta_\mainset}$.
\qedMP\end{proof}
From the preceding lemma we get the following corollary, which immediately implies Theorem~\ref{THM_bound_convex_body}.
\begin{cor}
For  any $r \in \N$, set $h(r)= (8n+4){\log r\over r}$ and consider the polynomial 
$\Laspolye{r} :=  {\multineedle{r}{h(r)}}/{\volu{\mainset}{\multineedle{r}{h(r)}}}$.
Then $q_r$  is a sum-of-squares polynomial of degree $4nr$, which satisfies  $\volu{\mainset}{\Laspolye{r}} = 1$ and 
\begin{equation}
\int_\mainset \Laspolye{r}(x) \mainfunc(x) dx = \bigO\bigg(\frac{\log^2 r}{r^2}\bigg).
\end{equation}
\end{cor}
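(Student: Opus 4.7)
The corollary is essentially an immediate book-keeping exercise applying Lemma~\ref{LEM_convex_body_technical_multivariate} with the specific choice $h(r) = (8n+4)\log r / r$. The plan is to verify the hypotheses of that lemma for $r$ large and then balance the two terms on the right-hand side of its conclusion against each other.

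First I would observe that by Definition~\ref{DEF:multineedle}, $\multineedle{r}{h}$ is a product of $n$ squared univariate polynomials each of degree at most $4r$ (the needles and $\tfrac{1}{2}$-needle have degree $4r$), hence $\multineedle{r}{h}$ is a sum of squares of degree $4nr$; dividing by the positive constant $\volu{\mainset}{\multineedle{r}{h}}$ preserves the SOS property and enforces $\volu{\mainset}{q_r} = 1$. So $q_r \in \Sigma_{2nr}$ is a feasible density.

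Next I would check that for $r$ sufficiently large we have $h(r) = (8n+4)\log r/r \in (1/(64r^2), \epsilon_K)$, which holds because $\log r / r \to 0$ and $r \log r \to \infty$. Lemma~\ref{LEM_convex_body_technical_multivariate} then yields
\begin{equation}
    \int_\mainset q_r(x)\mainfunc(x)\,dx \;\leq\; (\beta_{K,f}+n\gamma_{K,f}) h(r)^2 \;+\; C' r^{4n}\, e^{-h(r)r/2}.
\end{equation}
The first term equals $(\beta_{K,f}+n\gamma_{K,f})(8n+4)^2 (\log r/r)^2 = \bigO(\log^2 r / r^2)$. For the second term, the choice of $h(r)$ is tuned precisely so that $e^{-h(r)r/2} = e^{-(4n+2)\log r} = r^{-(4n+2)}$, giving $r^{4n}e^{-h(r)r/2} = r^{-2} = \bigO(1/r^2)$. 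Summing these two estimates proves the corollary.

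Finally, to deduce Theorem~\ref{THM_bound_convex_body} from this corollary, I would note that $q_r$ is feasible for \eqref{EQ_fmin_measures_bounded} at level $2nr$, so $\Lasupboundl{2nr}{K,1} - f_{\min,K} \leq \int_K q_r f \,dx = \bigO(\log^2 r / r^2)$; reparametrizing $r' = 2nr$ turns this into $\ubError{r'}{}{f} = \bigO(\log^2 r' / r'^2)$, absorbing the constant $2n$ into the big-$\bigO$. No step here is really delicate: the only subtlety is making sure the exponent in $h(r)$ is large enough that the $r^{4n}$ blow-up from Lemma~\ref{LEM_convex_body_technical_multivariate} is killed by the exponential factor, and the choice $(8n+4)$ is exactly what matches $r^{4n}$ against $r^{4n+2}$ to leave $r^{-2}$, which is the same order as $h(r)^2$.
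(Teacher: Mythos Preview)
Your proposal is correct and follows essentially the same approach as the paper: verify the range condition on $h(r)$, apply Lemma~\ref{LEM_convex_body_technical_multivariate}, and check that the choice $h(r)=(8n+4)\log r/r$ makes both terms on the right-hand side $\bigO(\log^2 r/r^2)$. The paper's own proof is just a terse one-line invocation of the lemma, while you spell out the exponent arithmetic explicitly; your additional remarks on the SOS degree of $q_r$ and the reparametrization to deduce Theorem~\ref{THM_bound_convex_body} are correct elaborations that the paper leaves implicit.
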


\begin{proof}
For $r$ sufficiently large, we have $\assuoneepsilon > h(r) > {1}/{64r^2}$ and so we may apply Lemma~\ref{LEM_convex_body_technical_multivariate}, which implies directly
\begin{align}
 \int_\mainset \Laspolye{r}(x) \mainfunc(x) dx 
    &\leq (\beta_{K,f}+n\gamma_{K,f}) h(r)^2 + 
    C'r^{4n} e^{-{1\over 2}rh(r)}       =\bigO\bigg({\log^2r\over r^2}\bigg).
    \end{align}
\qedMP\end{proof}
\section{Numerical Experiments}
	\label{SEC_NumericalExperiments}
	    In this section, we illustrate some of the results in this paper with numerical examples. We consider the test functions listed below in Table \ref{TAB_test_functions}, the latter four of which are well-known in global optimization and also used for this purpose in \cite{deKlerk2017}.
\begin{table}[h!]
\centering
\def\arraystretch{1.3}
\begin{tabular}{|l|l|l|}
\hline
Name & Formula & $f_{\min, [-1, 1]^2}$ \\
\hline
Linear 		& $f_{li}(x) = x_1$ & $f_{li}(-1, 0) = -1$ \\
Quadratic 	& $f_{qu}(x) = x_1 + x_2^2$ & $f_{qu}(-1,0) = -1$ \\
Booth  		& $f_{bo}(x) = (10x_1 + 20x_2 - 7)^2 + (20x_1 + 10x_2 -5)^2$ 			 	& $f_{bo}(\frac{1}{10},\frac{3}{10}) = 0$ \\
Matyas  	& $f_{ma}(x) = 26(x_1^2 + x_2^2) - 48x_1x_2$ 			  			     	& $f_{ma}(0,0) = 0$ \\
Camel  		& $f_{ca}(x) = 50x_1^2 -\frac{2625}{4}x^4_1 + \frac{15625}{6}x_1^6 + 25x_1x_2 + 25x_2^2$	 & $f_{ca}(0,0) = 0$ \\
Motzkin  	& $f_{mo}(x) = 64x_1^4x_2^2 + 64x_1^2x_2^4 - 48 x_1^2x_2^2 + 1$ 				& $f_{mo}(\pm \frac{1}{2}, \pm \frac{1}{2}) = 0$ \\
\hline
\end{tabular}
\caption{Polynomial test functions.}
\label{TAB_test_functions}
\end{table}
We compare the behaviour of the error $\ubError{r}{\mainset}{f}$ for these functions on different sets $\mainset$, namely the hypercube, the unit ball, and a regular octagon in $\R^2$. On the unit ball and the regular octagon, we consider the Lebesgue measure. On the hypercube, we consider both the Lebesgue measure and the Chebyshev measure. In each case, we compute the Lasserre bounds of order $r$ in the range  $1 \leq r \leq 20$, corresponding to sos-densities of degree up to $40$.

\medskip
\noindent \textbf{Computing the bounds.}
As explained in Section \ref{SEC_Introduction}, it is possible to compute the degree $2r$ Lasserre bound $\Lasupboundl{r}{K, \mu}$ by finding the smallest eigenvalue of the truncated moment matrix $M_{r, f}$ of $\mainfunc$, defined by
\[
	M_{r, f}(\alpha, \beta) = \int_\mainset f p_\alpha p_\beta d \mu(x) \quad (\alpha, \beta \in \N_r^n),
\]
assuming that one has an orthonormal basis $\{p_\alpha : \alpha \in \N_r^n \}$ of $\R[x]_r$ w.r.t. the inner product induced by the measure $\mu$, i.e., such that 
$ \int_\mainset p_\alpha p_\beta d \mu(x)=\delta_{\alpha,\beta}$.

More generally, if we use an arbitrary linear basis $\{p_\alpha\}$ of $\R[x]_r$  then the bound $\Lasupboundl{r}{K, \mu}$ is equal to the smallest \emph{generalized eigenvalue} of the system:
\begin{equation}
	\label{EQ_generalized_eigenvalue}
	M_{r, f} v = \lambda B_r v,
\end{equation}
where $B_r := M_{r, 1}$ is the matrix with entries $B_r(\alpha, \beta) = \int_K p_\alpha p_\beta d\mu(x)$. Note that if the $p_\alpha$ are orthonormal, then $B_r$ is the identity matrix and one recovers the eigenvalue formulation of Section \ref{SEC_Introduction}. For details, see, e.g., \cite{Lasserre2010}.

This formulation in terms of generalized eigenvalues allows us to work with the standard monomial basis of $\R[x]_r$. To compute the entries of the matrices $M_{r, f}$ and $B_r$, we therefore only require knowledge of the moments:
\[
	\int_\mainset x^\alpha d \mu(x) \quad (\alpha \in \N^n_r).
\]
For the hypercube, simplex and unit ball, closed form expressions for these moments are known (see, e.g., Table 1 in \cite{deKlerkLaurentSurvey}). For the octagon, they can then be computed by triangulation.
We solve the generalized eigenvalue problem \eqref{EQ_generalized_eigenvalue} using the \texttt{eig} function of the SciPy software package.

\medskip
\noindent\textbf{The linear case.}
We consider first the linear case $f(x) = f_{li}(x) = x_1$ and $K = [-1, 1]^2$ equipped with the Lebesgue measure.
Figure \ref{FIG_plot_toy_example} shows the values of the parameters $\ubError{r}{\mainset}{f_{li}}$ and $\ubError{r}{\mainset}{f_{li}} \cdot r^2$.
In accordance with Theorem \ref{THM_general_box} (and \ref{THM_Cheby_box}.(ii)), it appears indeed that $\ubError{r}{\mainset}{f_{li}} = \bigO(1/r^2)$, as suggested by the fact that the parameter $\ubError{r}{\mainset}{f_{li}} \cdot r^2$ approaches a constant value as $r$ grows.

\begin{figure}
	\centering
	\begin{tikzpicture}
\begin{axis}[
	width= 0.47 * \textwidth,
    title={},
    xlabel={r},
    grid=both,
    ylabel={$\ubError{r}{[-1, 1]^2}{f_{li}}$},
    xmin=1, xmax=20,
    ymin=0, ymax=0.5,
    xtick={1,5,10,15,20},
    ytick={0, 0.25, 0.5},
    legend pos=north west,
    cycle list name = exotic,
   	]
\addplot table{data/toy_example.dat};
\end{axis}
\end{tikzpicture}
	\begin{tikzpicture}
\begin{axis}[
	width= 0.47 * \textwidth,
    title={},
    xlabel={r},
    grid=both,
    ylabel={$\ubError{r}{[-1, 1]^2}{f_{li}} \cdot r^2$},
    xmin=1, xmax=20,
    ymin=0, ymax=3,
    xtick={1,5,10,15,20},
    legend pos=north west,
    cycle list name = exotic,
   	]
\addplot table{data/toy_example_r2.dat};
\end{axis}
\end{tikzpicture}
	\caption{The error of upper bounds for $f(x) = x_1 $ computed on $[-1, 1]^2$ w.r.t. the Lebesgue measure.}
	\label{FIG_plot_toy_example}
\end{figure}
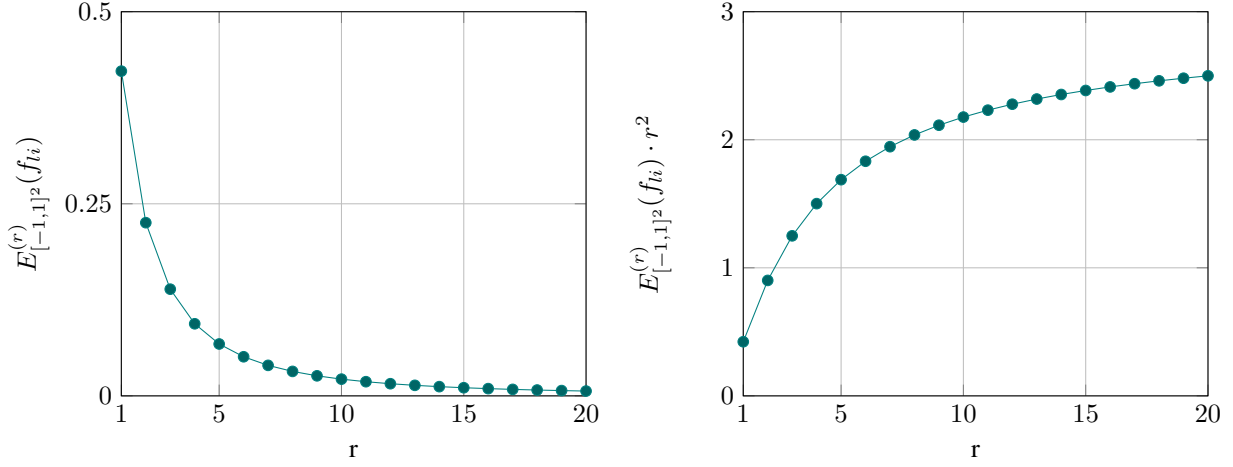

\medskip
\noindent\textbf{The unit ball.}
Next, we consider the unit ball $B^2$, again equipped with the Lebesgue measure. Figure \ref{FIG_plot_box_and_ball} shows the values of the ratio
\begin{equation}
	\label{EQ_box_ball_fraction}
	\ubError{r}{B^2}{f_{*}} / \ubError{r}{[-1,1]^2}{f_{*}}
\end{equation}
for $* \in \{li, qu, bo, ma, ca, mo \}$. In each case, the ratio \eqref{EQ_box_ball_fraction} appears to tend to a constant value, suggesting that the error $\ubError{r}{K}{f_{*}}$ has similar asymptotic behaviour for $K = [-1,1]^2$ and $K = B^2$. This matches the result of Theorem \ref{THM_general_ball} both in the case of a minimizer on the boundary ($* \in \{li, qu \}$) and in the case of a minimizer in the interior ($* \in \{bo, ma, ca, mo \}$).

\begin{figure}[]
	\begin{tikzpicture}
\begin{axis}[
	width=\textwidth,
	height=\textwidth * 0.5,
    title={},
    xlabel={r},
    grid=both,
    ylabel={$\ubError{r}{B^2}{f_{*}} \big / \ubError{r}{[-1,1]^2}{f_{*}}$},
    xmin=1, xmax=20,
    ymin=0, ymax=2,
    xtick={1,5,10,15,20},
    legend pos=north west,
    legend style={at={(0,1)},anchor=north west},
    legend columns= 3,
    cycle list name = exotic,
   	]
\addplot table{data/2d_linear.dat};
\addplot table{data/2d_quadratic.dat};
\addplot table{data/2d_booth.dat};
\addplot table{data/2d_matyas.dat};
\addplot table{data/2d_camel.dat};
\addplot table{data/2d_motzkin.dat};

\legend{Linear, Quadratic, Booth, Matyas, Camel, Motzkin}

\end{axis}
\end{tikzpicture}
	\caption{Comparison of the errors of upper bounds for the functions in Table \ref{TAB_test_functions} computed on $[-1, 1]^2$ and the unit ball $B^2$ w.r.t. the Lebesgue measure}
	\label{FIG_plot_box_and_ball}
\end{figure}
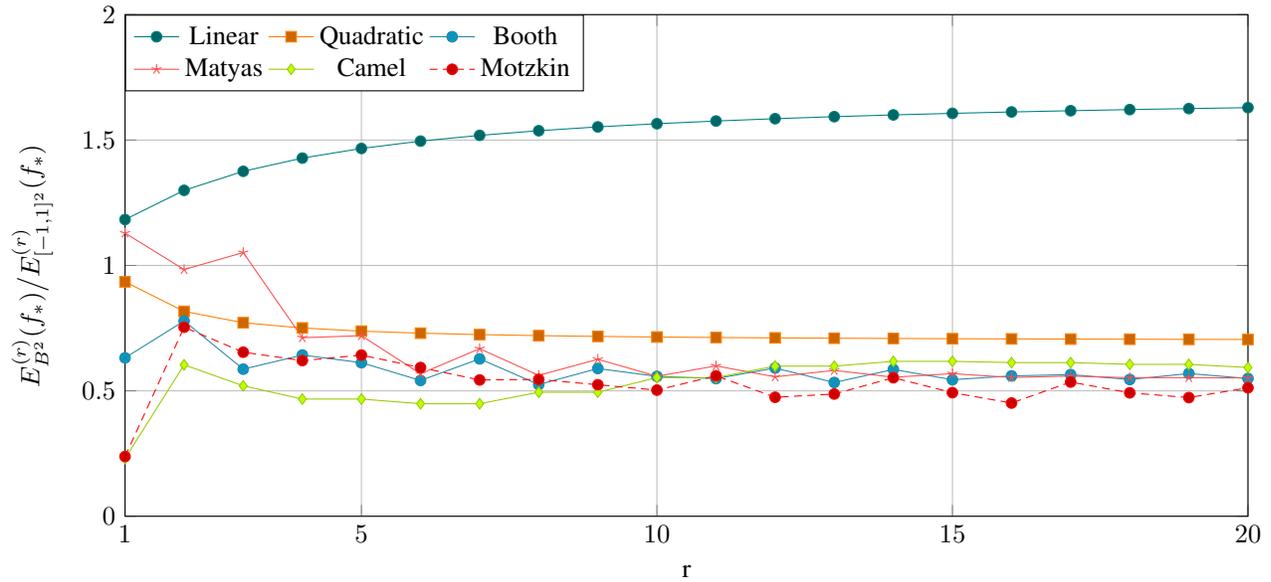
\begin{figure}[]
	\begin{tikzpicture}
\begin{axis}[
	width=\textwidth,
	height=\textwidth * 0.5,
    title={},
    xlabel={r},
    grid=both,
    ylabel={${\ubError{r}{\octagon}{f_{*}}} \big / {\ubError{r}{[-1,1]^2}{f_{*}}}$},
    xmin=1, xmax=20,
    ymin=0, ymax=2.5,
    xtick={1,5,10,15,20},
    legend pos=north west,
    legend style={at={(0,1)},anchor=north west},
    legend columns= 3,
    cycle list name = exotic,
   	]
\addplot table{data/2d_linear_octagon.dat};
\addplot table{data/2d_quadratic_octagon.dat};
\addplot table{data/2d_booth_octagon.dat};
\addplot table{data/2d_matyas_octagon.dat};
\addplot table{data/2d_camel_octagon.dat};
\addplot table{data/2d_motzkin_octagon.dat};

\legend{Linear, Quadratic, Booth, Matyas, Camel, Motzkin}

\end{axis}
\end{tikzpicture}
	\caption{Comparison of the errors of upper bounds for the functions in Table \ref{TAB_test_functions} computed on $[-1, 1]^2$ and the regular octagon $\octagon$ (see \eqref{EQ:octagon}) w.r.t. the Lebesgue measure}
	\label{FIG_plot_box_and_octagon}
\end{figure}
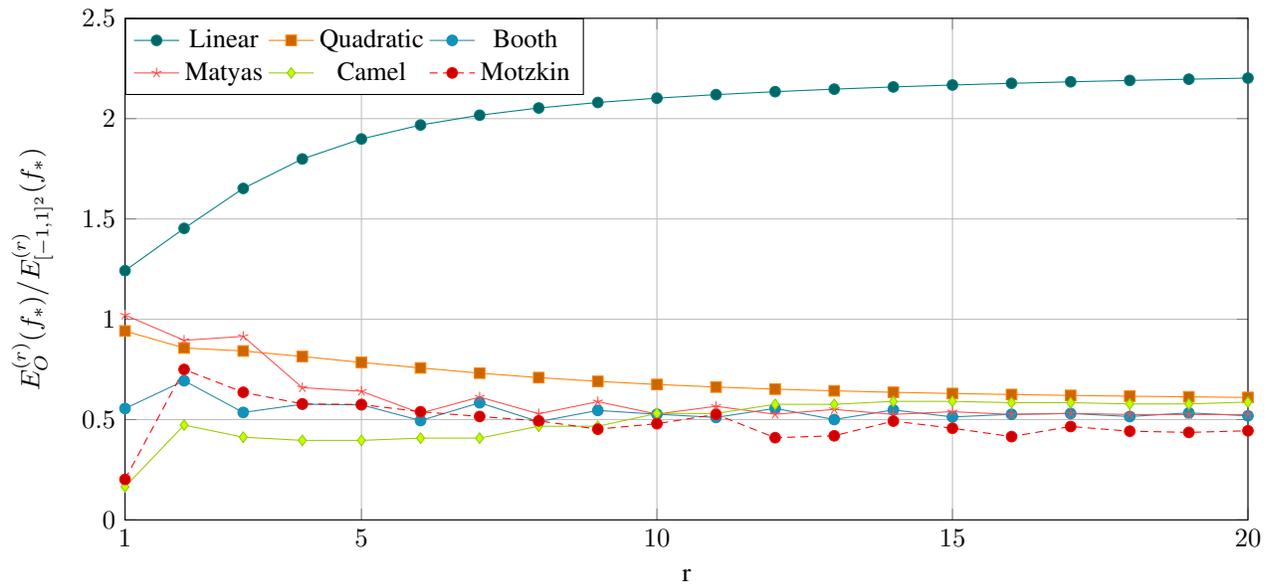
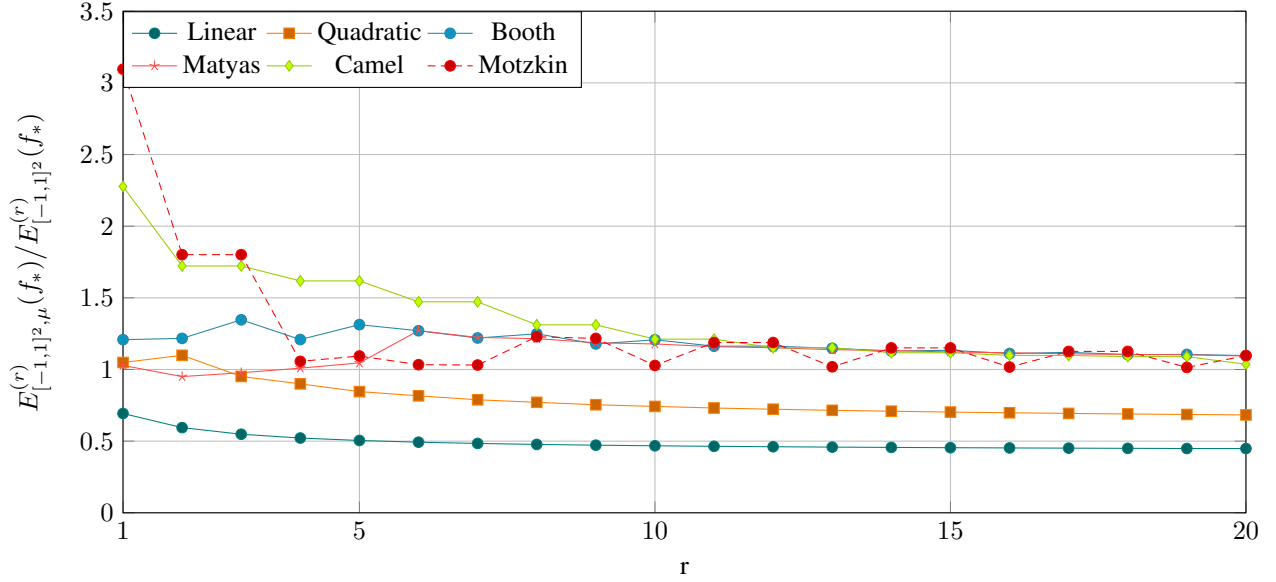
\begin{figure}[]
	\begin{tikzpicture}
\begin{axis}[
	width=\textwidth,
	height=\textwidth * 0.5,
    title={},
    xlabel={r},
    grid=both,
    ylabel={${\ubError{r}{[-1, 1]^2, \mu}{f_{*}}} \big / {\ubError{r}{[-1,1]^2}{f_{*}}}$},
    xmin=1, xmax=20,
    ymin=0, ymax=3.5,
    xtick={1,5,10,15,20},
    legend pos=north west,
    legend style={at={(0,1)},anchor=north west},
    legend columns= 3,
    cycle list name = exotic,
   	]
\addplot table{data/2d_linear_chebyshev.dat};
\addplot table{data/2d_quadratic_chebyshev.dat};
\addplot table{data/2d_booth_chebyshev.dat};
\addplot table{data/2d_matyas_chebyshev.dat};
\addplot table{data/2d_camel_chebyshev.dat};
\addplot table{data/2d_motzkin_chebyshev.dat};

\legend{Linear, Quadratic, Booth, Matyas, Camel, Motzkin}

\end{axis}
\end{tikzpicture}
	\caption{Comparison of the errors of upper bounds for the functions in Table \ref{TAB_test_functions} computed on $[-1, 1]^2$ w.r.t. the Lebesgue and Chebyshev measures.}
	\label{FIG_plot_box_chebyshev}
\end{figure}

\medskip
\noindent \textbf{The regular octagon.}
Consider now the regular octagon (with the Lebesgue measure)
\begin{equation}
	\label{EQ:octagon}
	\octagon = \conv \{ (\pm 1, 0), (0, \pm 1),  (\pm \frac{1}{2}\sqrt{2}, \pm \frac{1}{2}\sqrt{2}) \} \subseteq [-1,1]^2,
\end{equation}
which is an example of a convex body that is not ball-like (see Definition \ref{DEF:round}). Note that as a result, the strongest theoretical guarantee we have shown for the convergence rate of the Lasserre bounds on $\octagon$ 
 is in $\bigO( \log^2 r / r^2)$ (see Theorem \ref{THM_bound_convex_body}).
  Figure \ref{FIG_plot_box_and_octagon} shows the values of the ratio
\begin{equation}
	\label{EQ_box_octagon_fraction}
	\ubError{r}{\octagon}{f_{*}} / \ubError{r}{[-1,1]^2}{f_{*}}
\end{equation}
for $* \in \{li, qu, bo, ma, ca, mo \}$. As for  the unit ball, the ratio \eqref{EQ_box_octagon_fraction} seemingly tends to a constant value for each of the test polynomials. This  indicates a similar asymptotic behaviour of the error $\ubError{r}{K}{f_{*}}$ for $K = [-1, 1]^2$ and $K = \octagon$ and suggests that the convergence rate guaranteed by Theorem \ref{THM_bound_convex_body} might not be tight in this instance.

\medskip
\noindent \textbf{The Chebyshev measure}
Finally, we consider the Chebyshev measure $d\mu(x) = (1-x_1^2)^{-1/2}(1-x_2^2)^{-1/2}dx$ on $[-1, 1]^2$, which we compare to the Lebesgue measure.  Figure \ref{FIG_plot_box_chebyshev} shows the values of the fraction
\begin{equation}
	\label{EQ_box_chebyshev_fraction}
	\ubError{r}{[-1,1]^2, \mu}{f_{*}} / \ubError{r}{[-1,1]^2}{f_{*}}
\end{equation}
for $* \in \{li, qu, bo, ma, ca, mo \}$. Again, we observe that the fraction \eqref{EQ_box_chebyshev_fraction} appears to tend to a constant value in each case, matching the result of Theorem \ref{THM_general_box}.
\section{Concluding remarks}
    \label{SEC_Conclusion}
    \noindent
{\bf Extension to non-polynomial functions.}
Throughout, we have assumed that the function $\mainfunc$ is a polynomial. Strictly speaking, this assumption is not necessary to obtain our results.  For the results in Section~\ref{SEC_SpecialConvexBodies} and in Theorem~\ref{THM_bound_convex_body}, it suffices that $\mainfunc$ has an upper estimator, exact at one of its global minimizers on $\mainset$, and satisfying the properties given in Lemma~\ref{LEM_quadraticupperestimator}. In light of Taylor's Theorem, such an upper estimator exists for all $\mainfunc \in C^2(\R^n, \R)$. For Theorem~\ref{THM_bound_interior_cone}, it is even sufficient that $\mainfunc$ satisfies $\mainfunc(x) \leq \mainfunc(a) + \Lipconst{\mainfunc} ||x - a||$ for all $x \in \mainfunc$, where $\Lipconst{\mainfunc} > 0$ is a constant. That is, it suffices that $\mainfunc$ is Lipschitz continuous on $\mainset$. 
Finally, as shown in \cite[Theorem 10]{deKlerkLaurent2019}, results on the convergence rate of the bounds $f^{(r)}$ for polynomials $f$ extend directly to the case of rational functions $f$.

\medskip\noindent
{\bf Accelerated convergence results.}
For the minimization of linear polynomials  the convergence rate of the bounds $f^{(r)}$ is shown to be in the order 
$\Theta(1/r^2)$ for the hypercube \cite{deKlerkLaurent2018} and  the unit sphere \cite{deKlerkLaurent2019}.
Hence, for arbitrary polynomials, a quadratic rate is the best we can hope for. On the other hand, if we restrict to a class of functions with additional properties, then a better convergence rate can be shown.
Indeed,  a faster convergence rate can be achieved when the function $f$ has many vanishing derivatives at a global minimizer.
We will make use of the following consequence of Taylor's theorem.
\begin{thm}[Taylor's theorem]
Assume $f\in C^\beta(\R^n,\R)$ with $\beta\ge 1$. 
Then we have
$$ f(x)\le \sum_{\alpha\in\N^n, |\alpha|\le \beta-1} {1\over \alpha!}(D^\alpha)(f)(a) (x-a)^\alpha + \delta_{K,f}\|x-a\|^\beta \quad \text{ for all } x\in K$$
for some constant $\delta_{K,f}>0$.
\end{thm}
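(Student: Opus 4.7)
The plan is to invoke the standard multivariate Taylor formula with an explicit integral remainder of order $\beta$ and then bound this remainder uniformly on $K$. Concretely, since $f \in C^\beta(\R^n,\R)$ and the segment from $a$ to $x$ lies in $K$ (we are working with a convex body), one has the identity
\begin{equation}
    f(x) \;=\; \sum_{|\alpha|\le \beta-1}\frac{1}{\alpha!}D^\alpha f(a)\,(x-a)^\alpha \;+\; R_\beta(x,a),
\end{equation}
where the remainder admits the representation
\begin{equation}
    R_\beta(x,a) \;=\; \sum_{|\alpha|=\beta}\frac{\beta}{\alpha!}\,(x-a)^\alpha\!\int_0^1 (1-t)^{\beta-1}\,D^\alpha f\bigl(a+t(x-a)\bigr)\,dt.
\end{equation}
This reduces the claim to producing a uniform upper bound on $R_\beta(x,a)$ of the form $\delta_{K,f}\|x-a\|^\beta$.

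Next, I would exploit compactness. Since $f \in C^\beta$, each partial derivative $D^\alpha f$ with $|\alpha|=\beta$ is continuous and hence bounded on the compact set $K$; set
\begin{equation}
    M \;:=\; \max_{|\alpha|=\beta}\,\max_{y\in K}\,|D^\alpha f(y)|.
\end{equation}
Using $|(x-a)^\alpha|\le \|x-a\|^{|\alpha|}=\|x-a\|^\beta$ for each multi-index $\alpha$ with $|\alpha|=\beta$, and $\int_0^1(1-t)^{\beta-1}dt=1/\beta$, this immediately yields
\begin{equation}
    |R_\beta(x,a)| \;\le\; \Bigl(\sum_{|\alpha|=\beta}\frac{M}{\alpha!}\Bigr)\|x-a\|^\beta \;=\; \delta_{K,f}\,\|x-a\|^\beta,
\end{equation}
where $\delta_{K,f}:=M\sum_{|\alpha|=\beta}1/\alpha! = Mn^\beta/\beta!$ depends only on $K$, $\beta$, $n$ and the $\beta$-th derivatives of $f$. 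Substituting into the Taylor identity proves the stated inequality (since we only need an upper bound, we may drop the absolute value on $R_\beta$).

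There is no real obstacle here; the result is just a packaging of the classical Taylor remainder estimate, the only subtlety being that we must choose a form of the remainder (integral or Lagrange) that actually involves only $\beta$-th derivatives of $f$, so that the continuity hypothesis $f\in C^\beta$ is enough. For non-convex $K$ satisfying Assumption~\ref{ASSU_one}, the same argument works provided one first extends $f$ in a $C^\beta$ manner to an open neighbourhood of $K$ and takes the maximum $M$ over a compact such neighbourhood; this is standard and does not affect the conclusion.
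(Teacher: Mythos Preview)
Your argument is correct and is exactly the standard derivation of the $\beta$-th order Taylor bound via the integral form of the remainder. There is nothing to compare it against: the paper states this theorem as a classical fact without proof (just as it does for the second-order version, Theorem~\ref{THM_Taylor}), so your proposal simply supplies the routine justification that the authors omitted.
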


\begin{thm}
\label{THM_fasterconvergence}
Let $f\in C^\beta(\R^n,\R)$ ($\beta \ge 1$) and let $a$ be a global minimizer of $f$ on $K$. Assume that all partial derivatives $(D^\alpha f)(a)$ vanish for $1\le |\alpha|\le \beta-1$.
Then, given any $\epsilon>0$ we have
\[ 
	\ubError{r}{}{\mainfunc} = \bigO\bigg(\frac{\log^\beta r}{r^\beta}\bigg) = o\bigg(\frac{1}{r^{\beta - \epsilon}}\bigg).
\]
\end{thm}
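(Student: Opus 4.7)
The plan is to combine the Taylor-type upper estimator furnished by the vanishing-derivatives hypothesis with the needle-polynomial construction of Section~\ref{SEC:THM:ASSU}, which was stated for an arbitrary exponent $\beta\ge 1$ precisely in view of this application.

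First, I apply the variant of Taylor's theorem displayed immediately above the statement: since $(D^\alpha f)(a)=0$ for all $1\le|\alpha|\le\beta-1$, the expansion collapses to
\[
f(x)\le f(a)+\delta_{K,f}\|x-a\|^\beta \quad\text{for all } x\in K.
\]
Setting $g(x):=f(a)+\delta_{K,f}\|x-a\|^\beta$, this is an upper estimator of $f$ on $K$ that is exact at $a$, so by Lemma~\ref{LEM_upperestimator} it suffices to bound $\ubError{r}{}{g}$ in place of $\ubError{r}{}{f}$. Using Lemma~\ref{LEM_lineartransformation} to translate so that $a=0$, and subtracting the constant $f(a)$ as in Remark~\ref{REM_reducetoestimator}, the problem reduces to exhibiting, for each $r$, an SOS density $q_r$ on $K$ with
\[
\delta_{K,f}\int_K q_r(x)\|x\|^\beta\,dx=\bigO\bigl(\log^\beta r/r^\beta\bigr).
\]

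Second, this is exactly the content of Proposition~\ref{PROP_bound_interior_cone_univariate}, which was proved for general $\beta\ge 1$. Taking $h(r)=2(2n+\beta)\log r/r$ and $q_r:=\absneedle{r}{h(r)}/\volu{K}{\absneedle{r}{h(r)}}$, the proposition delivers the required estimate on $\int_K q_r(x)\|x\|^\beta\,dx$, provided $K$ satisfies Assumption~\ref{ASSU_one} (the standing assumption of Section~\ref{SEC_GeneralSets}); if instead $a\in\interior{K}$, Theorem~\ref{THM_interiorglobalminimizer} already yields the stronger rate $\bigO(1/r^2)$, and there is nothing more to do.

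The final equivalence $\bigO(\log^\beta r/r^\beta)=o(1/r^{\beta-\epsilon})$ is a routine calculus observation: $\log^\beta r=o(r^\epsilon)$ for any $\epsilon>0$. There is no genuine obstacle in this proof; the only essential new ingredient beyond Section~\ref{SEC:THM:ASSU} is the replacement of the linear Lipschitz estimator $\gradConst{\mainset}{f}\|x-a\|$ (used to prove Theorem~\ref{THM_bound_interior_cone}) by the sharper polynomial estimator $\delta_{K,f}\|x-a\|^\beta$, which the vanishing of the lower-order derivatives at $a$ makes available. The rest is a direct invocation of machinery that has already been built with the exponent $\beta$ left as a free parameter.
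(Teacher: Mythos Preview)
Your proof is correct and matches the paper's own argument, which is the one-line ``This follows as a direct application of Proposition~\ref{PROP_bound_interior_cone_univariate}''; you have simply spelled out the Taylor step and the reduction via Lemma~\ref{LEM_upperestimator} that are implicit there. One small caveat: your parenthetical about the interior case is unnecessary and, for $\beta>2$, actually insufficient (since $\bigO(1/r^2)$ is then \emph{weaker} than $\bigO(\log^\beta r/r^\beta)$); fortunately Proposition~\ref{PROP_bound_interior_cone_univariate} does not require the minimizer to lie on the boundary, so your main line already covers all cases and you can simply delete that remark.
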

\begin{proof}
This follows as a direct application of Proposition~\ref{PROP_bound_interior_cone_univariate}.
\qedMP\end{proof}
This applies, e.g., for the univariate polynomial $f(x)=x^{\beta}$ on the interval $K=[0,1]$. 

\noindent
As an application we can answer in the negative a question posed in 
\cite{deKlerkLaurent2018}, where the authors asked about the existence of a `saturation result' for the convergence rate of the Lasserre upper bounds, namely whether 
\begin{equation}
    \ubError{r}{}{\mainfunc} = o\bigg(\frac{1}{r^2}\bigg) \overset{?}{\iff} f \text{ is a constant polynomial}.
\end{equation}

\medskip\noindent
{\bf Application to the generalized problem of moments (GPM) and cubature rules.}
As shown in \cite{deKlerkKuhnPostek2018} results on the convergence analysis of the bounds $\ubError{r}{}{\mainfunc}$ have direct implications for the following generalized moment problem (GMP):
$$\text{\rm val}:= \inf \big\{\int_K f_0(x)d\nu(x): \int_Kf_i(x)d\nu(x)=b_i\ (i\in [m])\big\},$$
where $b_i\in \R$ and  $f_i\in \R[x]$ are given,  and the variable $\nu$ is a Borel measure on $K$. Bounds can be obtained by searching for measures of the form $q_rd\mu$ with  $\mu$ a given Borel measure on $K$ and $q_r\in \Sigma_r$. Their quality can be analyzed via the parameter
$$
	\Delta(r)=\min_{q_r\in \Sigma_r}\max_{i=0}^m \big|\int_K f_i(x)q_r(x)d\mu(x)-b_i\big|,
$$
setting $b_0=\text{\rm val}$. It is shown in \cite{deKlerkKuhnPostek2018} (see also \cite{deKlerkLaurentSurvey}) that, if $\ubError{r}{K,\mu}{\mainfunc} =\bigO(\epsilon(r))$ for all polynomials $f$, then 
$\Delta (r)=\bigO(\sqrt {\epsilon(r)})$. Hence, our results in this paper imply directly that $\Delta (r)=\bigO(\log r /r)$ for general convex bodies and $\bigO(1/r)$ for hypercubes, balls and simplices (recall Table~\ref{TAB_convergence_rates} for exact details). An important instance of (GMP) is finding cubature schemes for numerical integration on $K$ (see, e.g., \cite{deKlerkLaurentSurvey} and references therein).  If $\{x^{(j)},\lambda_j: j\in [N]\}$ form a cubature scheme with positive weights $\lambda_j>0$ that permits to integrate any polynomial of degree at most $d+2r$ on $K$ w.r.t. measure $\mu$, then, as shown in \cite{Martinez2019}, we have
$$f^{(r)}_{K,\mu} \ge f_{cub}^{(r)}:= \min_{j=1}^N f(x^{(j)}) \ge f_{\min,K}.$$
Hence any upper bound on $f^{(r)}_{K,\mu}$ directly gives an upper bound on the parameter $f^{(r)}_{cub}$. Conversely, any lower bound on $f^{(r)}_{cub}$ implies a lower bound on $f^{(r)}_{K,\mu}$, which is the fact used  in \cite{deKlerkLaurent2018, deKlerkLaurent2019} to show the lower bound $\Omega(1/r^2)$ for the hypercube and the sphere.

Finally, let us mention that the needle polynomials are used already in \cite{Kroo2015} to study cubature rules. There, the author considers degree $r$ cubature rules for which the sum $\sum_{j \in [N]}|\lambda_j|$ is polynomially bounded in $r$. For all $x \in \mainset, A \subseteq \mainset$, define the parameters
\begin{equation}
	\rho(x, A) := \sup \{ h > 0 : \ball{h}{x} \cap A = \emptyset \}, ~ \rho(K, A) := \sup_{x \in \mainset} \rho(x, A),
\end{equation}
which indicate how densely $A$ is distributed at $x$ or in $\mainset$, respectively. Kro\'o \cite{Kroo2015} shows that if $X_r$ is the set of nodes of a degree $r$ cubature rule on a convex body $\mainset$, we then have
\begin{equation}
	\rho(\mainset, X_r) = \bigO(\log r / r)
\end{equation}
and that, if $x_0 \in \mainset$ is a vertex of $\mainset$, we even have
\begin{equation}
	\rho(x_0, X_r) = \bigO(\log^2 r / r^2).
\end{equation}
Although the asymptotic rates here are the same as the ones we find in Theorem~\ref{THM_bound_interior_cone} and Theorem~\ref{THM_bound_convex_body}, we are not aware of any direct link between the density of cubature points and the convergence rate of the Lasserre upper bounds.

\medskip\noindent
{\bf Some open questions.}
There are several natural questions left open by this work. The first natural question is whether the convergence rate in $\bigO(1/r^2)$ can be proved for all convex bodies. So far we can only prove a rate in $\bigO(\log^2r/r^2)$, but we suspect that the $\log r$ term is just a consequence of the analysis technique used here. The computational results for the octagon in Section \ref{SEC_NumericalExperiments} seem to support this. Another question is whether this also applies to general compact sets under Assumption~\ref{ASSU_one}, since we know of no example showing this is not possible.

In particular, it is interesting to determine the exact rate of convergence for  polytopes. We could so far only deal with hypercubes and simplices. The main tool we used was the `local similarity' of the simplex with the hypercube. For a general polytope $K$, if the minimum is attained at a point lying in the interior of $K$ or of one of its facets, then we can still apply the `local similarity' tool (and deduce the $\bigO(1/r^2)$ rate). However, at other points (like its vertices) $K$ is in general not locally similar to the hypercube, so another  proof technique seems needed.
A possible strategy could be splitting $K$ into simplices and using the known convergence rate for the simplex containing a global minimizer; however,  a difficulty there is keeping track of the distribution of mass of an optimal sum-of-squares on the other simplices.

    \subsection*{Acknowledgments}
    This work is supported by the Europeans Union’s EU Framework Programme for Research and Innovation Horizon 2020 under the Marie Sk{ł}odowska-Curie Actions Grant Agreement No 764759 (MINOA).
\bibliographystyle{Stylefiles/spmpsci}
\bibliography{main_bib}
\end{document}